\tikzset{> = stealth}
\newtheorem{thm}{Theorem}[section]
\newtheorem{prp}[thm]{Proposition}
\newtheorem{conjecture}[thm]{Conjecture}
\newtheorem{lem}[thm]{Lemma}
\newtheorem{cor}[thm]{Corollary}
\theoremstyle{change}
\theoremstyle{nonumberplain}
\theoremstyle{definition}
\newtheorem{dfn}[thm]{Definition}
\theoremstyle{remark}
\newtheorem{remark}[thm]{Remark}
\numberwithin{equation}{section}
\begin{document}

\title{Koszul self duality of manifolds}
\author{Connor Malin}
 \address{University of Notre Dame}
 \email{cmalin@nd.edu}
\maketitle


\begin{abstract}
    We show that Koszul duality for operads in $(\mathrm{Top},\times)$ can be expressed via generalized Thom complexes. As an application, we prove the Koszul self duality of the right module $E_M$ associated to a framed manifold $M$. We discuss implications for factorization homology, embedding calculus, and confirm an old conjecture of Ching on the relation of Goodwillie calculus to manifold calculus.
\end{abstract}

\tableofcontents


\section{Introduction}

Consider the topological category $\mathscr{M}\mathrm{fld}_n$ of $n$-dimensional smooth manifolds and smooth embeddings. This category has an important full subcategory $\mathscr{D}\mathrm{isk}_n$ for which the objects are finite disjoint unions of $\mathbb{R}^n$. Associated to a manifold $M \in \mathscr{M}\mathrm{fld}_n$ is the presheaf on $\mathscr{D}\mathrm{isk}_n$
\[\bigsqcup_{i \in I} \mathbb{R}^n \mapsto \mathrm{Emb}(\bigsqcup_{i \in I} \mathbb{R}^n, M ).\]

It was observed by Boavida de Brito--Weiss and Turchin \cite{brito_weiss_2013,Turchin_2013} that these presheaves determine the behavior of manifold calculus towers \cite{weiss_1999} associated to any enriched presheaf on $\mathscr{M}\mathrm{fld}_n$. In particular, it is known that in codimension $\geq 3$, the derived mapping space $\mathrm{Map}^h_{\mathscr{D}\mathrm{isk}_n}(\mathrm{Emb}(-, M ),\mathrm{Emb}(-, N ))$ has the homotopy type of $\mathrm{Emb}(M,N)$ \cite{goodwillie_klein_weiss_2001}.

These presheaves also occur in factorization homology -- a theory of integrating various types of $E_n$-algebras over $n$-manifolds. In particular, one observes that factorization homology only depends on the homotopy type of this presheaf. One calculational tool that appears in factorization homology is Poincaré-Koszul duality \cite{PKayala_francis_2019}. This intertwines the self duality of manifolds with the geometric bar-cobar duality of $E_n$-algebras introduced by Ayala-Francis. Their construction, which we denote \textbf{bar}, sends $E_n$-algebras to $E_n$-coalgebras.
Bar-cobar duality for $E_n$-algebras bears a strong resemblance to the classic form of Koszul duality, which we denote by $B$. However, there are important differences: if one applies \textbf{bar} to an $E_n$-algebra it produces an $E_n$-coalgebra, but if one applies $B$ to an $E_n$-algebra, it produces a $B(E_n)$-coalgebra. This distinction is unsurprising, since Koszul duality applies to algebras over any operad, while the bar-cobar duality of $E_n$-algebras that appears in factorization homology is unique to $E_n$-algebras. It is interesting to ask if there is an interpretation of Poincaré-Koszul duality which uses the more classical setting of Koszul duality. This is strongly suggested by the existence of ``Poincaré-Koszul'' equivalences over arbitrary operads, constructed separately by Ching, in the case of left modules, \cite[Proposition 6.1]{ching_2012} and Amabel in the case of algebras \cite[Main theorem]{amabel_2022}.

The essence of Koszul duality is exhibited in the commutativity of the diagram of algebraic operads:

\[\begin{tikzcd}
	{s_n\mathrm{lie}} & {\mathrm{pois}_n} & {\mathrm{com}} \\
	{s_n K(\mathrm{com})} & {s_n K(\mathrm{pois}_n)} & {K(\mathrm{lie})}
	\arrow["\simeq", no head, from=1-1, to=2-1]
	\arrow["\simeq", no head, from=1-2, to=2-2]
	\arrow[from=2-1, to=2-2]
	\arrow[from=2-2, to=2-3]
	\arrow[from=1-1, to=1-2]
	\arrow[from=1-2, to=1-3]
	\arrow["\simeq", no head, from=1-3, to=2-3]
\end{tikzcd}\]

 This diagram says the Koszul dual $K(-):=B(-)^\vee$ of the commutative operad is the Lie operad, the Koszul dual of the Lie operad is the commutative operad, and the Koszul dual of the n-Poisson operad is itself, up to a shift \cite[Chapter 13]{loday_vallette_2012}. This diagram encodes the following observations: the Koszul dual of a Lie algebra is a commutative algebra, the Koszul dual of a commutative algebra is a Lie algebra, and the Koszul dual of an n-Poisson algebra is a shifted n-Poisson algebra.\footnote{We use the convention that Lie algebras have a bracket of degree $-1$.} If we recall that $H_*(E_n)\cong \mathrm{pois}_n$ \cite{cohen_lada_may_1976}, we see the bar-cobar duality of $E_n$-algebras is reflected as a theorem about Koszul self duality of the homology of $E_n$.

 Koszul duality was originally formulated for operads in $(\mathrm{DGVect_\mathbb{Q}},\otimes)$, but was generalized to operads in $(\mathrm{Top}_*,\wedge)$ and $(\mathrm{Sp},\wedge)$ by Ching and Salvatore, independently \cite{ching_2005,salvatore_1998}. Koszul duality for topological operads has had great success, particularly in the realm of Goodwillie calculus. Arone-Ching used Koszul duality to endow the Goodwillie derivatives $\partial_* F$ of a functor $F:\mathrm{Top} \rightarrow \mathrm{Top}$ with the structure of a $\mathrm{lie}:=K(\mathrm{com})$ bimodule \cite{arone_ching_2011}. The point-set level construction of Koszul duality, combined with its interaction with Goodwillie calculus, often allows for explicit computation. One can deduce unstable results from stable techniques, not just in theory, but also in practice. The explicit cooperad structure on $B(\mathrm{com})$ was utilized by Behrens to calculate the ring of $\mathbb{F}_2$ Dyer-Lashof operations of the spectral Lie operad, which he used to give a novel calculation of the unstable homotopy groups of spheres up to the 19-stem \cite{behrens_2012}.\footnote{It is worth noting that recently Konovalov completed the calculation at odd primes without relying on the point-set construction of Koszul duality, in contrast with Behrens \cite{konovalov2023algebraic}.}

It is natural to ask if there is a diagram of operads in spectra which lifts the previous diagram of algebraic operads

\[\begin{tikzcd}
	{s_n\mathrm{lie}} & {\Sigma^\infty_+E_n} & {\mathrm{com}} \\
	{s_n K(\mathrm{com})} & {s_n K(\Sigma^\infty_+E_n)} & {K(\mathrm{lie})}
	\arrow["\simeq", no head, from=1-1, to=2-1]
	\arrow["\simeq", no head, from=1-2, to=2-2]
	\arrow[from=2-1, to=2-2]
	\arrow[from=2-2, to=2-3]
	\arrow[from=1-1, to=1-2]
	\arrow[from=1-2, to=1-3]
	\arrow["\simeq", no head, from=1-3, to=2-3]
\end{tikzcd}\]

After much speculation, this was recently realized by Ching-Salvatore \cite{ching_salvatore}. To complete the program of translating Poincaré-Koszul duality into a more classical form, one must also establish a Koszul self duality result for the right modules $E_M$, the configurations of disks in $M$. In this paper, we construct a compatible diagram of right modules which witnesses the compactly supported Koszul self duality of $E_M$. Combined with work of Ching on Koszul duality for operads in spectra, this gives a construction of a Poincaré-Koszul equivalence for factorization homology of left $E_n$-modules.

 Around the time the self duality of $E_n$ was initially conjectured, Ching noticed the stabilized configuration spaces of framed manifolds could be endowed with the structure of a shifted Lie right module in two ways: one through Goodwillie calculus of the functor $X \rightarrow \Sigma^\infty\operatorname{Map}(M^+,X)$ and one through manifold calculus combined with the hypothesized self duality of $E_n$ \cite{ching}. He conjectured that these two right module structures should coincide. We prove this as a consequences of the self duality of $E_M$.

The main work that occurs in proving the self duality of $E_M$ is finding a way to access the operad equivalence $\Sigma^\infty_+ E_n \simeq s_n K(\Sigma^\infty_+ E_n)$ in a way that minimizes the difficult point-set level constructions of Ching-Salvatore. This operad equivalence is rather technical, involving many models of the $E_n$ operad and sphere operads $\mathrm{CoEnd}(S^n)$. It is reasonable to expect that alterations to the arguments could address the self duality of $E_M$, but we expect this to be prohibitively complex. 

Instead, we interpret Koszul duality in terms of Spivak normal fibrations. Analogously to the classic theory of Poincaré complexes \cite{spivak_1967}, we define the Koszul dualizing fibration of an operad, which has the structure of an operad in parametrized spectra. Using Verdier duality, we show that the Thom complex of the dualizing fibration is a model of the Koszul dual of $O$. We then prove that self duality is equivalent to trivializing the Koszul dualizing fibration as an operad in parametrized spectra. This procedure lifts the homological theory of Poincaré/Koszul operads of \cite[Section 5]{malin2} to operads in spectra, analogous to how Atiyah duality lifts Poincaré duality in (co)homology to a statement in spectra.

A parallel story holds true for right modules over operads. By a combination of the locality of the Koszul dualizing fibration for codimension 0 embeddings and a construction of noncompact Koszul self duality for the right $E_n$-module $E_{\mathbb{R}^n}$,
we are able to explicitly construct a zigzag of maps showing that codimension $0$ subsets of $\mathbb{R}^n$ have compactly supported Koszul self duality, lifting the homological statement of \cite[Theorem 7.8]{malin2}. Ultimately, we extend this result to all tame, framed $n$-manifolds using the theory of Weiss cosheaves.  Of particular note, the argument we present is compatible with any zigzag of maps implementing the Koszul self duality of $E_n$.

\begin{center}
\textbf{Main Results}
\end{center}
Precise statements of the following results can be found in the given sections.

The main ingredient for the compactly supported Koszul self duality of $E_M$ is an interpretation of Koszul duality in terms of Thom complexes $\mathrm{Th}(-)$ of operads and right modules in the category of parametrized spectra. 
\begin{prp}[Propositions \ref{prp:koszulverdieroperad}, \ref{prp:koszulverdiermodule}: Koszul-Verdier duality]
To an operad $O$ and right module pair $(R,A)$ in $(\mathrm{Top},\times)$ we can associate an operad $\xi_O$ and right module $\xi_{(R,A)}$ in $(\mathrm{ParSp},\bar\wedge)$ for which there are compatible equivalences
\[\mathrm{Th}(\xi_O) \xrightarrow{\simeq} K(\Sigma^\infty_+ O)\]
\[\mathrm{Th}(\xi_{(R,A)}) \xrightarrow{\simeq} K(\Sigma^\infty R/A).\]
\end{prp}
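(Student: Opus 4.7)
My plan is to realize the Koszul dual of $\Sigma^\infty_+ O$ as the Thom complex of a parametrized spectrum over Ching's topological bar construction $BO$, by running Verdier duality arity-by-arity and compatibly with operadic composition.

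I would begin by using the standard identification $B(\Sigma^\infty_+ O)(k)\simeq \Sigma^\infty BO(k)$, where $BO$ is the pointed topological cooperad built from rooted trees with labeled leaves; the Koszul dual $K(\Sigma^\infty_+ O)(k)$ is then by definition the Spanier--Whitehead dual $\bigl(\Sigma^\infty BO(k)\bigr)^\vee$. The target of the desired map is therefore controlled by the Verdier dualizing complex of $BO(k)$, so the entire problem is to find a parametrized spectrum representative of this dualizing complex that simultaneously carries an operadic structure.

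Next I would define the Koszul dualizing fibration $\xi_O(k)$ as a parametrized spectrum over $BO(k)$ representing this dualizing complex. Although $BO(k)$ is not a manifold, the tree stratification provides local models: on the open stratum indexed by a tree $T$, the bar construction locally looks like a product of open intervals indexed by the internal edges of $T$, so the dualizing complex is a sphere shifted by the number of internal edges. These pieces glue together into a parametrized spectrum $\xi_O(k)$, analogous to a Spivak normal fibration but with respect to Verdier duality on a stratified space. Tree-grafting implements the cooperad composition on $BO$, and by functoriality of the dualizing complex under the relevant pushforwards it corresponds to an operadic composition on $\xi_O$, so that $\xi_O$ acquires operad structure in $(\mathrm{ParSp},\bar\wedge)$.

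The desired equivalence is then the Verdier duality statement
\[\mathrm{Th}(\xi_O)(k) \xrightarrow{\simeq} \bigl(\Sigma^\infty BO(k)\bigr)^\vee = K(\Sigma^\infty_+ O)(k),\]
arity-natural and compatible with the operad/cooperad structures, hence an equivalence of operads. The module case proceeds in parallel: the module-bar construction of $(R,A)$ is a comodule over $BO$, stratified by trees with a distinguished module vertex, so its dualizing parametrized spectrum $\xi_{(R,A)}$ inherits a module structure over $\xi_O$, and Verdier duality gives the second equivalence.

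The hard part, I expect, is not the pointwise Verdier identification of $\mathrm{Th}(\xi_O)(k)$ with the dual of $\Sigma^\infty BO(k)$, which is essentially formal once $\xi_O(k)$ represents the dualizing complex. Rather, it is verifying that the stratified construction of $\xi_O$ yields a genuine (or at least coherent) operad in parametrized spectra, with tree-grafting on $BO$ matching operadic composition on $\xi_O$ compatibly across all arities, and that the Thom-complex equivalence respects these structures simultaneously. Controlling this coherence across the tree filtration, and threading it through the naturality of Verdier duality, is where I would anticipate most of the technical work.
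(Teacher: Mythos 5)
Your overall shape is right---realize $K(\Sigma^\infty_+O)$ as the Thom complex of a Verdier dualizing parametrized spectrum and get the operad structure from functoriality of the dualizing object---but you have placed $\xi_O(k)$ over the wrong base, and this creates a genuine obstruction rather than a technicality. You want a parametrized spectrum over the bar construction $B(O)(k)$ itself, glued from local dualizing complexes whose fiber dimension jumps with the stratum (you say it explicitly: a sphere shifted by the number of internal edges). An object with stratum-dependent fiber type is a constructible sheaf of spectra, not a parametrized spectrum in the May--Sigurdsson sense used here; there is no levelwise ex-space model of it, $\mathrm{Th}$ of such a gluing does not compute the Spanier--Whitehead dual by the fiberwise arguments available, and the version of Verdier duality actually proved in the paper (via Klein's theorem and the equivalence $\mathrm{Sp}_X\simeq \mathrm{Sp}^{B\Omega X}$) requires a connected finite base and produces the \emph{relative} dualizing spectrum of a pair. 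Also, for a general operad $O$ the open strata are products of the $O(S_v)$ with intervals, so their dualizing complexes are not spheres unless each $O(S_v)$ is a manifold; your local model is special to $E_n$.

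The paper's device, which your proposal is missing, is to never put anything over $B(O)$: the dualizing fibration is $\xi_O(I)=P\bigl(W_{[0,\infty]}(O)(I),\partial W_{[0,\infty]}(O)(I)\bigr)$, a parametrized spectrum over the \emph{unreduced} $W$-construction (which is equivalent to $O(I)$ and has constant fiber type), with the bar construction recorded only as the quotient $X/A$ of the pair. The relative Verdier duality statement $\mathrm{Th}(P(X,A))\simeq(\Sigma^\infty X/A)^\vee$ (Proposition \ref{prp:functor}) then gives the arity-wise equivalence directly, and the operadic coherence you flag as the hard part is handled not by any stratified gluing but by two concrete checks: the pairs $(W_{[0,\infty]}(O),\partial W_{[0,\infty]}(O))$ form an operad in the category $\mathrm{Top}_\subset$ of pairs whose morphisms are embeddings off the boundary (Lemma \ref{lem:operadinpair}, a boundary-condition verification on grafted trees), and $P(-,-)$ is lax monoidal for the pushout product of pairs (Lemma \ref{lem:lax}). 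Covariance of $(X,A)\mapsto(\Sigma^\infty X/A)^\vee$ on $\mathrm{Top}_\subset$ via collapse maps is exactly what turns the pair-level composition into the operad structure dual to the bar decomposition, so the remaining proof of the proposition is a single commuting square comparing smash products of sections with smash products of collapse maps. Keeping the base as $W_{[0,\infty]}(O)$ is also essential downstream, where $\xi_O$ is shown to be a spherical fibration over (a model of) $O$ whose trivialization is equivalent to self-duality; that framework would not survive a base with varying fiber dimension.
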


The above result quickly leads to the compactly supported Koszul self duality in the case of tame open subsets of $\mathbb{R}^n$, which we then extend to all tame, framed manifolds using the theory of Weiss cosheaves (Definition \ref{dfn:cosheaf}). The statement of Koszul self duality involves operadic suspension $s_n$ (Definition \ref{dfn:opSus}) and right module suspension $s_{(n,n)}$ (Definition \ref{dfn:modSus}).

\begin{thm}[Theorem \ref{thm:selfduality}: Koszul self duality of $E_M$]
    For a framed $n$-manifold $M$, there are equivalences compatible with the self duality of $E_n$:
    \[\Sigma^\infty_+ E_M \simeq s_{(n,n)} K(\Sigma^\infty E_{M^+}).\]
\end{thm}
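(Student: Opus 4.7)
The plan is to reduce Koszul self duality of $E_M$ to the trivialization of a parametrized spectrum, paralleling the strategy the introduction sketches for operads themselves. By the Koszul-Verdier duality of Proposition \ref{prp:koszulverdiermodule} applied to the pair $(E_n, E_M)$, one has $K(\Sigma^\infty E_{M^+}) \simeq \mathrm{Th}(\xi_{(E_n,E_M)})$, so producing the asserted equivalence amounts to exhibiting a natural trivialization of the operad-module $\xi_{(E_n,E_M)}$ in parametrized spectra, suitably shifted, compatible with the trivialization of $\xi_{E_n}$ coming from the Ching-Salvatore Koszul self duality of $\Sigma^\infty_+ E_n$.

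First I would settle the base case $M = \mathbb{R}^n$ by producing a noncompactly supported Koszul self duality of the left $E_n$-module $E_{\mathbb{R}^n}$ directly from the self duality of the operad $\Sigma^\infty_+ E_n$. Since $E_{\mathbb{R}^n} = \mathrm{Emb}(-, \mathbb{R}^n)$ is the free $E_n$-module on a point in the appropriate derived sense, this reduction should be essentially formal at the level of parametrized spectra and furnishes an explicit trivialization of the noncompact analogue of $\xi_{(E_n, E_{\mathbb{R}^n})}$.

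Next I would propagate the result through codimension zero embeddings. The key input is that the module dualizing fibration is local: for an open tame $U \subseteq \mathbb{R}^n$, the fibration $\xi_{(E_n, E_U)}$ is obtained from $\xi_{(E_n, E_{\mathbb{R}^n})}$ by restriction along the embedding. Combining locality with the noncompact self duality of $E_{\mathbb{R}^n}$ from the previous step yields an explicit zigzag witnessing the compactly supported self duality of $E_U$, natural in framed codimension zero embeddings among such subsets. The final step is to extend from tame open subsets of $\mathbb{R}^n$ to arbitrary tame framed $n$-manifolds using the theory of Weiss cosheaves (Definition \ref{dfn:cosheaf}): both sides of the claimed equivalence assemble into Weiss cosheaves on $M$, and since such a cosheaf is determined by its values on the disk subcategory, the equivalence established on $\mathscr{D}\mathrm{isk}_n$-shaped opens forces a global equivalence on $M$, with naturality in framed embeddings falling out of the cosheaf formalism.

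The main obstacle I anticipate is the compatibility bookkeeping, not any single hard computation. The Ching-Salvatore operad self duality is a rigid zigzag passing through several models of $E_n$ and of sphere operads $\mathrm{CoEnd}(S^n)$, and the whole merit of passing to the Thom-complex framework is that the module self duality can be expressed as the trivialization of a parametrized fibration without revisiting the zigzag. The delicate points will be checking that the module dualizing fibration genuinely inherits the trivialization from the operad case functorially, that the Weiss cosheaf property holds with sufficient generality to cover all tame framed manifolds, and that the final equivalence remains compatible with whatever zigzag implements the self duality of $E_n$ — something the parametrized-spectrum formulation is designed to make automatic.
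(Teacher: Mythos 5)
Your proposal follows the paper's proof essentially step for step: Koszul--Verdier duality reduces the statement to trivializing the module dualizing fibration, the base case is derived from the Ching--Salvatore self duality of $E_n$ via $\mathcal{F}_{\mathbb{R}^n}\simeq\mathcal{F}_n$ and $B(\mathcal{F}_{(\mathbb{R}^n)^+})\simeq\Sigma^n B((\mathcal{F}_n)_+)$, locality of the dualizing fibration handles codimension $0$ submanifolds of $\mathbb{R}^n$, and Weiss cosheaves extend the result to all tame framed manifolds. The only caveats are minor imprecisions rather than gaps: the dualizing fibration is attached to the module pair $(\mathcal{F}_{M},\widehat\partial\mathcal{F}_{M})$ rather than to ``$(E_n,E_M)$'', and the codimension $0$ step is not literally a restriction of parametrized spectra but a comparison map to $\xi_{(\mathcal{F}_{D^n},\widehat\partial\mathcal{F}_{D^n})}$ shown to be a fiberwise equivalence by a top-homology argument.
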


The self duality of $E_M$ has a range of applications due to the interaction of $E_M$ with various forms of functor calculus. With respect to Goodwillie calculus, it leads directly to a resolution of Ching's conjecture:

\begin{cor}[Corollary \ref{cor:chingconjecture}: Ching's conjecture]
    For a framed $n$-manifold $M$, there is an equivalence of right $ \mathrm{lie}$-modules \[\mathrm{res}_{\mathrm{lie}}(s_{(-n,-n)}\Sigma^\infty_+ E_M) \simeq  \partial_*(\Sigma^\infty \mathrm{Map}_*(M^+,-)).\]
\end{cor}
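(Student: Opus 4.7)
The plan is to match both sides of the equivalence with $K(\Sigma^\infty E_{M^+})$, viewed as a $\mathrm{lie}$-module via restriction along the operad map $\mathrm{lie} \simeq K(\mathrm{com}) \to K(\Sigma^\infty_+ E_n)$ obtained by Koszul-dualizing the augmentation $\Sigma^\infty_+ E_n \to \mathrm{com}$. The left-hand side is immediate from Theorem \ref{thm:selfduality}: applying module desuspension gives
\[s_{(-n,-n)} \Sigma^\infty_+ E_M \simeq K(\Sigma^\infty E_{M^+})\]
as modules over $K(\Sigma^\infty_+ E_n) \simeq s_{-n} \Sigma^\infty_+ E_n$, and restricting along $\mathrm{lie} \to K(\Sigma^\infty_+ E_n)$ yields the left-hand side of the corollary.

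For the right-hand side I would invoke the Arone-Ching theorem \cite{arone_ching_2011}: the Goodwillie derivatives of a functor $\mathrm{Top}_* \to \mathrm{Sp}$ form a right $\partial(\mathrm{id}) \simeq \mathrm{lie}$-module, and for $F(X) = \Sigma^\infty \mathrm{Map}_*(K,X)$ the derivatives are computed as a Koszul dual of a diagonal $\mathrm{com}$-comodule associated to $K$. For $K = M^+$, this diagonal comodule factors through the finer $\Sigma^\infty E_{(\mathbb{R}^n)^+}$-comodule structure on $\Sigma^\infty E_{M^+}$: the relevant diagonals $M \to M^k$ decompose up to homotopy via the little disks on $M$, and pushing forward along the augmentation $\Sigma^\infty_+ E_n \to \mathrm{com}$ recovers the diagonal $\mathrm{com}$-comodule on $\Sigma^\infty M^+$. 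Koszul dualizing and using naturality then yields
\[\partial(\Sigma^\infty \mathrm{Map}_*(M^+, -)) \simeq \mathrm{res}_\mathrm{lie} K(\Sigma^\infty E_{M^+}),\]
and combining the two identifications completes the proof.

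The main obstacle is the compatibility of $\mathrm{lie}$-module structures in the second step: the Arone-Ching $\mathrm{lie}$-module structure on the derivatives (coming from $\partial(\mathrm{id}) \simeq \mathrm{lie}$) must agree with the restriction from $K(\Sigma^\infty_+ E_n)$ along $\mathrm{lie} \to K(\Sigma^\infty_+ E_n)$. This should follow from the naturality of the Koszul-Verdier framework (Propositions \ref{prp:koszulverdieroperad} and \ref{prp:koszulverdiermodule}) with respect to operad augmentations, together with Arone's explicit chain model for derivatives of mapping functors in terms of compactified configurations, which is precisely the geometric input that makes the refinement from a $\mathrm{com}$-comodule to a $\Sigma^\infty E_{(\mathbb{R}^n)^+}$-comodule possible.
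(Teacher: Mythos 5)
Your proposal is correct and follows essentially the same route as the paper: the left-hand side is handled by Theorem \ref{thm:selfduality} exactly as you describe, and the right-hand side by the Arone--Ching identification $\partial_*(\Sigma^\infty\mathrm{Map}_*(M^+,-)) \simeq B((M^+)^\wedge,\mathrm{com},1)^\vee$ together with a comparison of this bar construction to the one for the $E_n$-module $E_{M^+}$. The step you flag as the ``main obstacle'' is precisely where the paper inserts its one concrete input: the collapse map $B(\mathcal{F}_{M^+},(\mathcal{F}_n)_+,1)\to B((M^+)^\wedge,\mathrm{com},1)$ is an equivalence of $B(\mathrm{com})$-comodules because both sides are identified with $(M^+)^\wedge/\Delta^{\mathrm{fat}}$, so dualizing gives the compatibility of $\mathrm{lie}$-module structures that you correctly anticipate needing.
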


With respect to embedding calculus, we have a Pontryagin-Thom type construction which extends one point compactification of open embeddings to arbitrary maps of right modules:

\begin{thm}[Theorem \ref{thm:pontry}: Pontryagin-Thom equivalence]
   For framed $n$-manifolds $M,N$ there is a map
    \[\mathrm{Map}^h_{\Sigma^\infty_+ E_n}(\Sigma^\infty_+E_M, \Sigma^\infty_+ E_N) \rightarrow \mathrm{Map}^h_{\Sigma^\infty_+ E_n}(\Sigma^\infty E_{N^+}, \Sigma^\infty E_{M^+}).  \]
    Supposing Conjecture \ref{thm:chingKoszul} that $K:\mathrm{RMod}_{\Sigma^\infty_+ E_n} \rightarrow \mathrm{RMod}_{K(\Sigma^\infty_+ E_n)}$ is an equivalence when restricted to finite type right modules, this map is an equivalence.
\end{thm}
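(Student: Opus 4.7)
The plan is to construct the Pontryagin--Thom map as the natural Koszul duality map on morphism spaces, with source and target then identified via the self duality of $E_M$. Since $K$ is contravariant, functoriality yields for any operad $O$ in spectra and $O$-modules $A,B$ a natural map
\[K_* \colon \mathrm{Map}^h_O(A, B) \longrightarrow \mathrm{Map}^h_{K(O)}(K(B), K(A)).\]
Specializing to $O = \Sigma^\infty_+ E_n$, $A = \Sigma^\infty_+ E_M$, $B = \Sigma^\infty_+ E_N$ provides the first ingredient. The Koszul self duality of $E_n$ (Ching--Salvatore) identifies $K(\Sigma^\infty_+ E_n)$ with $s_n^{-1}\Sigma^\infty_+ E_n$, so that modules over $K(\Sigma^\infty_+ E_n)$ correspond to modules over $\Sigma^\infty_+ E_n$ up to an operadic shift, which can be absorbed into the module suspension $s_{(n,n)}$.

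To connect the target of $K_*$ to the target of the Pontryagin--Thom map, I apply $K$ to both sides of the self duality equivalence $\Sigma^\infty_+ E_X \simeq s_{(n,n)} K(\Sigma^\infty E_{X^+})$ of Theorem~\ref{thm:selfduality}, obtaining $K(\Sigma^\infty_+ E_X) \simeq s_{(-n,-n)} KK(\Sigma^\infty E_{X^+})$ for $X = M,N$. The canonical double dualization unit $\Sigma^\infty E_{X^+} \to KK(\Sigma^\infty E_{X^+})$ then induces a comparison map $s_{(-n,-n)} \Sigma^\infty E_{X^+} \to K(\Sigma^\infty_+ E_X)$. Precomposing and postcomposing $K_*$ with these comparisons (and applying the suspension $s_{(n,n)}$) yields the stated Pontryagin--Thom map.

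Under Conjecture~\ref{thm:chingKoszul}, the double Koszul duality unit is an equivalence on the relevant class of modules, so all the comparison maps above are equivalences, and the standard Koszul duality adjunction shows that $K_*$ itself is an equivalence. The Pontryagin--Thom map is therefore a composition of equivalences, hence an equivalence.

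The main obstacle I foresee is verifying compatibility of $K$ with the operadic and module suspensions $s_n$ and $s_{(n,n)}$ coherently enough to assemble these ingredients into a well-defined map of derived mapping spaces rather than a mere zigzag. I expect this to be handled cleanly through the parametrized spectra formalism and the Thom complex description of $K$ in Propositions~\ref{prp:koszulverdieroperad} and~\ref{prp:koszulverdiermodule}: these realize all the relevant self-duality equivalences as honest zigzags of maps in $(\mathrm{ParSp}, \bar\wedge)$, whose naturality in module morphisms and framed embeddings is built in from the start.
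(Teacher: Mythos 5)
Your proposal is essentially the paper's proof: apply the contravariant functoriality of $K$ to derived mapping spaces of modules, invoke Conjecture \ref{thm:chingKoszul} to make that map an equivalence, and use Theorem \ref{thm:selfduality} to identify $K(\Sigma^\infty_+ E_X)$ with $s_{(-n,-n)}\Sigma^\infty E_{X^+}$ — and you are in fact more explicit than the paper about the double-dualization step hidden in that last identification. The one wrinkle is that your comparison map $s_{(-n,-n)}\Sigma^\infty E_{M^+} \to K(\Sigma^\infty_+ E_M)$ coming from the unit points the wrong way for postcomposition in the target, so the \emph{unconditional} construction of the map requires this unit to be invertible (equivalently, the paper's asserted zigzag $s_{(n,n)}K(\Sigma^\infty_+ E_M)\simeq \Sigma^\infty E_{M^+}$), which is precisely where double Koszul duality for these modules — known for $\Sigma^\infty_+E_n$ by Arone--Ching but glossed over in both your argument and the paper's — must be fed in.
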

With respect to factorization homology, we achieve a version of Poincaré-Koszul duality \cite{PKayala_francis_2019} for left $\Sigma^\infty_+ E_n$-modules:

\begin{thm}[Theorem \ref{thm:Poincarékoszul}: Poincaré-Koszul duality for left $\Sigma^\infty_+ E_n$-modules]

For a framed $n$-manifold $M$ and a left $\Sigma^\infty_+ E_n$-module $L$ there is an equivalence

\[\int_{\Sigma^\infty_+ E_M} L \xrightarrow{\simeq} \int^{s_{(n,n)}\Sigma^\infty E_{M^+}^\vee} B(L).\]

\end{thm}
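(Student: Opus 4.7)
The plan is to deduce this theorem by combining the Koszul self duality of $E_M$ from Theorem \ref{thm:selfduality} with a general Poincar\'e-Koszul equivalence for left modules over a Koszul-dual pair of operads in spectra. Such an abstract equivalence is precisely the content of Ching's \cite[Proposition 6.1]{ching_2012}: for any operad $O$ in spectra, any right $O$-module $R$, and any left $O$-module $L$, the derived tensor product $R \otimes^{\mathbb{L}}_O L$ is naturally equivalent to a derived cotensor of the Koszul dual module $K(R)$ with the bar construction $B(L)$ over the Koszul dual operad $K(O)$. This is the abstract Poincar\'e-Koszul phenomenon for arbitrary operads, and the contravariant factorization homology notation $\int^{(-)}$ appearing in the theorem statement refers exactly to this cotensor construction.

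First I would specialize Ching's equivalence to $O = \Sigma^\infty_+ E_n$ and $R = \Sigma^\infty_+ E_M$, so that by the tensor product presentation of factorization homology the left hand side becomes $\int_{\Sigma^\infty_+ E_M} L$. Next I would invoke Theorem \ref{thm:selfduality}, which provides the equivalence $\Sigma^\infty_+ E_M \simeq s_{(n,n)} K(\Sigma^\infty E_{M^+})$ of right $\Sigma^\infty_+ E_n$-modules, compatibly with the Ching-Salvatore Koszul self duality of $\Sigma^\infty_+ E_n$. Dualizing, one obtains an identification $K(\Sigma^\infty_+ E_M) \simeq s_{(n,n)}\Sigma^\infty E_{M^+}^\vee$ of right modules over $K(\Sigma^\infty_+ E_n) \simeq s_n^{-1}\Sigma^\infty_+ E_n$. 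Substituting this identification into Ching's abstract equivalence produces the right hand side $\int^{s_{(n,n)}\Sigma^\infty E_{M^+}^\vee} B(L)$ exactly as stated.

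I expect the main obstacle to be organizational rather than substantive: the bookkeeping of the operadic and module suspension conventions $s_n$ versus $s_{(n,n)}$, together with verifying that the self duality equivalence of Theorem \ref{thm:selfduality} is sufficiently natural, as an equivalence of right modules, to slot into Ching's framework and intertwine with both the self duality of $\Sigma^\infty_+ E_n$ and the bar construction on $L$. The substantive analytic content is already carried by Theorem \ref{thm:selfduality} and by Ching's abstract Poincar\'e-Koszul equivalence; once these structural compatibilities are checked, the stated equivalence follows by direct substitution.
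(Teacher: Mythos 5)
Your proposal matches the paper's argument: the paper likewise obtains the statement by specializing Ching's abstract bar--cobar equivalence $B(R,O,L)\simeq \Omega(B(R),B(O),B(L))$, i.e. $\int_R L \simeq \int^{B(R)}B(L)$, to $R=\Sigma^\infty_+ E_M$ and then rewriting $B(\Sigma^\infty_+ E_M)$ and $B(\Sigma^\infty_+ E_n)$ via the self duality of $E_M$ and $E_n$ (Theorem \ref{thm:selfduality} and Ching--Salvatore), restricting $B(L)$ along the zigzag $B(\Sigma^\infty_+ E_n)\simeq s_n(\Sigma^\infty_+ E_n)^\vee$. The only caveat is the same suspension/dualization bookkeeping you already flag, so the approaches are essentially identical.
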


\begin{center}
\textbf{Acknowledgements}
\end{center}

Among many people who contributed to this paper through patient conversation, I am especially grateful to Araminta Amabel, Mark Behrens, and Alexander Kupers for their comments on previous versions of this paper. I also thank the anonymous referee for their careful review of this paper. The author acknowledges the support of NSF grant DMS-1547292.

\begin{center}
\textbf{Outline}
\end{center}

Our goal is to remain as concrete as possible without making arguments or constructions prohibitively complicated. Those who have preferences for $\infty$-categorical approaches will have no trouble translating our approach to Verdier duality to that of Lurie \cite[5.5.5]{lurieHA}.

In Section \ref{sec:operad}, we introduce (co)operads and right (co)modules using partial (de)composites, and we recall two models of the $E_n$ operad and the right modules $E_M$, as well as an extension of this construction to the one-point compactification $M^+$. In Section \ref{sec:conjecture}, we recount a conjecture of Ching regarding manifold calculus and Goodwillie calculus. In Section \ref{sec:parsp}, we give a review of parametrized spectra. In Section \ref{sec:verdier}, we prove Verdier duality for the functor of relative sections $ \Gamma^A(-)$. In Section \ref{sec:koszul}, we review the Koszul duality of Ching and Salvatore and construct the Koszul dualizing fibration $\xi_O$ associated to an operad in unpointed spaces. In Section \ref{sec:koszulself}, we prove that codimension $0$ submanifolds of $\mathbb{R}^n$ satisfy noncompact Koszul self duality. In Section \ref{sec:factorization}, we discuss Weiss cosheaves taking value in the category of right modules over an operad. In Section \ref{sec:results}, we prove the main result that $E_M$ is Koszul self dual and give applications.

\begin{center}
    \bf{Conventions}
\end{center}

 By $\mathrm{Top}$ or $\mathrm{Top}_*$ we mean a convenient category of topological spaces or pointed topological spaces. All manifolds are assumed tame, i.e. diffeomorphic in an unspecified way to the interior of a compact manifold with boundary. By $\mathrm{Sp}$ we mean the category of orthogonal spectra. We model the category of parametrized spectra $\mathrm{ParSp}$ by orthogonal sequences of ex-spaces, i.e. retractive spaces (explained in Section \ref{sec:parsp}). The homology of an unpointed space is always unreduced, and the homology of a pointed space is always reduced. In the final two sections, we use several models of $\infty$-categories: topological categories, Kan complex enriched categories, topological model categories, simplical model categories, and quasicategories. These are related as follows:
\begin{center}
\begin{tikzcd}
\mathrm{Cat}^{\mathrm{Top}} \arrow[r, "\mathrm{Sing}"]                                                       & \mathrm{Cat}^{\mathrm{Kan}} \arrow[r, "\mathcal{N}"]                                                                          & \mathrm{QuasiCat} \\
\mathrm{ModelCat}^{\mathrm{Top}} \arrow[r, "\mathrm{Sing}"'] \arrow[u, "\mathrm{res}|_{\mathrm{bifibrant}}"] & \mathrm{ModelCat}^{\mathrm{SSet}} \arrow[u, "\mathrm{res}|_{\mathrm{bifibrant}}"] \arrow[ru, "\mathcal{N}^{\mathrm{model}}"'] &                  
\end{tikzcd}
\end{center}

For our purposes, the construction of interesting functors will take place in $\mathrm{ModelCat}^{\mathrm{Top}}$ and the categorical homotopy theory will take place in each of $\mathrm{ModelCat}^\mathrm{SSet},\mathrm{Cat}^\mathrm{Kan},$ and $\mathrm{QuasiCat}$. These models of $\infty$-categories are suitably equivalent. For instance, they have the same theory of mapping spaces and diagram categories \cite[Theorem 2.2.0.1, Proposition 4.2.4.4.]{lurie_2009}. Using these comparisons, we will reduce the statements of our many of our theorems to versions which we verify in $\mathrm{QuasiCat}.$

\section{Operads and right modules of interest}\label{sec:operad}
In this section, we give a brief review of operads and their right modules. For extended discussion, and a comparison to $\circ$-product definitions, we refer to \cite{ching_2005}.
\begin{dfn}
Let $\mathrm{Fin}$ denote the category of nonempty finite sets and bijections. A symmetric sequence in $C$ is a functor $F:\mathrm{Fin} \rightarrow C$  .
\end{dfn}

\noindent We denote the category of symmetric sequences by $\Sigma \mathrm{Seq} (C)$. Given finite sets $I,J$ with $a \in I$, we define  the infinitesimal composite \[I \cup_a J:=I -\{a\} \sqcup J.\] 

\begin{dfn}
An operad in a symmetric monoidal category $(C,\otimes)$ is a symmetric sequence $O$ in $C$ together with morphisms called partial composites for all $a \in I$ and a unit:
\[O(I) \otimes O(J) \rightarrow O(I \cup_a J)\]
\[1_\otimes \rightarrow O(\{*\}).\]  These must satisfy straightforward equivariance and unitality conditions. As well, for $a,a' \in A,b \in B$ we require an associative law corresponding to the identification $(A \circ_a B) \circ_b C = A \circ_a (B \circ_b C)$ and a parallel composition law corresponding to the identification $(A \circ_a B) \circ_{a'} C=(A \circ_{a'} C)\circ_a B$.
\end{dfn}

\begin{dfn}
A cooperad in a symmetric monoidal category $(C,\otimes)$ is a symmetric sequence $P$ in $C$ together with morphisms called partial decomposites for all $a \in I$ and a counit:
\[P(I \cup_a J) \rightarrow P(I) \otimes P(J)\]
\[P(\{*\})\rightarrow 1_\otimes.\]
 These must satisfy straightforward equivariance and counitality conditions. As in the case of operads, these should satisfy coassociativity and parallel cocomposition laws.
\end{dfn}

From now on we restrict to reduced (co)operads, i.e. those cooperads whose underlying symmetric sequence $S$ has  $S(\{*\})$ equal to the unit of $\otimes$. 

\begin{dfn}
A right module $R$ over an operad $O$ in $(C,\otimes)$ is a symmetric sequence $R$ in $C$ with morphisms called partial composites:
\[R(I) \otimes O(J) \rightarrow R(I \cup_a J)\]
for all $a \in I$. These must satisfy straightforward equivariance and unitality conditions, as well as associativity and parallel composition laws.

\end{dfn}

\begin{dfn}
A right comodule $R$ over a cooperad $P$ in $(C,\otimes)$ is a symmetric sequence $R$ in $C$ together with morphisms called partial decomposites:
\[R(I \cup_a J) \rightarrow R(I) \otimes P(J)\]
for all $a \in I$. These must satisfy straightforward equivariance and counitality conditions, as well as coassociativity and parallel cocomposition laws.
\end{dfn}

Of course operads, cooperads, right modules, and right comodules assemble into categories we call $\mathrm{Operad}(C,\otimes),$ $ \mathrm{Cooperad}$, $(C,\otimes),$ $\mathrm{RMod}_O,\mathrm{RComod}_P$ where morphisms respect all the structure. If $C$ has a notion of weak equivalences, then weak equivalences in any of these categories are defined as those which levelwise consist of weak equivalences, and if $C$ is topologically enriched, then all of these categories are also topologically enriched.

It is important to note that all maps respect the symmetric group actions, but the symmetric group actions do not affect whether a given (co)operad/right (co)module map is a weak equivalence. In this sense, the theory of operads is strongly related to the world of Borel $\Sigma_n$-equivariant homotopy theory.

Often we will be dealing with right modules over different operads. There is an obvious compatibility condition for maps of right modules over different operads.

\begin{dfn}
Given a map of (co)operads $f:O \rightarrow P$, and right (co)modules $R,S$ over $O,P$, respectively, we say a map of symmetric sequences $g:R \rightarrow S$ is compatible with $f$ if all the evident diagrams commute. 

More generally, if there is a zigzag of (co)operad maps from $O$ to $P$ through $Q_i$ and a zigzag of symmetric sequence maps from $R$ to $S$ through the right (co)modules $W_i$ over $Q_i$, we say the zigzags are compatible if each symmetric sequence map is compatible with the associated (co)operad map.
\end{dfn}

In this paper, we are primarily interested in specific right modules associated to framed $n$-manifolds. We endow $\mathrm{Emb}(M,N)$ with the compact open $C^\infty$-topology. Fix a framed $n$-manifold $M$, explicitly, an $n$-manifold with a vector bundle isomorphism $\phi_M:TM \cong M \times \mathbb{R}^n$.

\begin{dfn} The Moore isotopy space of $M$, $\mathrm{Iso}(M)$, is the space of maps \[M \times [0,a] \rightarrow \mathrm{GL}(n) \: \mathrm{for} \: 0 \leq a <\infty\] such that $M \times \{0\}$ is mapped to $ \mathrm{Id} \in \mathrm{GL}(n)$. This space is topologized by embedding it into \[\mathrm{Map}(M \times [0,\infty), \mathrm{GL}(n)) \times [0,\infty)\] as the pairs $(f,a)$ where $f(x,t)=f(x,a)$ for all $t\geq a$.
\end{dfn}

 By pointwise composing with $\phi_M$, we can think of the space $\mathrm{Iso}(M)$ as the space of Moore isotopies of the framing of $M$. The descriptor ``Moore'' is used to indicate that the isotopies are not required to have a fixed length.

\begin{dfn}\label{dfn:framedemb}
Let $M,N$ be $n$-manifolds with framings $\phi_M,\phi_N$. The space $\mathrm{Emb}^{\mathrm{fr}}(M,N)$ is the subspace of $\mathrm{Emb}(M,N) \times \mathrm{Iso}(M)$ given by elements \[i: M \rightarrow N, f:M \times [0,a] \rightarrow  \mathrm{GL}(n)\] such that 
\[\phi_M \cdot f|_{M \times \{a\}} = i^*(\phi_N)\]
\end{dfn}

In other words, the space of framed embeddings is the space of embeddings from $M$ to $N$ together with a Moore isotopy of $\phi_M$ to make the embedding strictly preserve the framing. Given framed manifolds $M,M',M''$ there are composition maps
\[\mathrm{Emb}^\mathrm{fr}(M,M') \times \mathrm{Emb}^\mathrm{fr}(M',M'') \rightarrow \mathrm{Emb}^\mathrm{fr}(M,M''')\]
given by composition of embeddings and composition of Moore isotopies. Since the length of Moore isotopies is allowed to vary, this composition is strictly associative.

\begin{dfn}
For $|I| \geq 2$, the operad $E_n$ in $(\mathrm{Top},\times)$ is given by \[E_n(I):= \mathrm{Emb}^{\mathrm{fr}}(\bigsqcup_{i \in I} \mathbb{R}^n, \mathbb{R}^n).\] Operad partial composition is defined by composition of framed embeddings.
\end{dfn}

This model of $E_n$ is equivalent to more classical definitions involving rectilinear embeddings or standard embeddings of disks \cite[Remark 2.10]{ayala_francis_2015}.

\begin{dfn}
For a framed manifold $M$, the right $E_n$-module $E_M$ in $(\mathrm{Top},\times)$ is given by \[E_M(I)=\mathrm{Emb}^\mathrm{fr}(\bigsqcup_{i \in I} \mathbb{R}^n, M).\]
Right module partial composition is defined by composition of framed embeddings.
\end{dfn}

Heuristically, there should be a right $E_n$-module $E_{M^+}$ which consists of the ``framed embeddings of disks into $M^+$ which are allowed to disappear at $\infty$''. Rather than rigorously define this, we instead pass to different models of $E_n,E_M$ which are more amenable to one-point compactifications. 

\begin{dfn}
    If $X$ is a space and $I$ is a finite set, the $I$-labeled configuration space is \[F(X,I):=\{(x_i) \in M^{ I}| x_i = x_j \implies i = j\} .\]
\end{dfn}

From a configuration of disks we may extract a configuration of points by recording the centers of each disks. In order to encode operadic information into such a map, we must ``enhance'' configuration spaces with additional combinatorial information.


By an $I$-labelled tree, we mean a tree with:
\begin{itemize}
    \item  a distinguished root,
    \item a bijection between the leaves and $I$,
    \item and satisfying the condition that if $v$ is an internal vertex, i.e. not a leaf or a root, the \textit{outgoing} edges $e(v)$, i.e. edges which are between $v$ and some leaf, should have cardinality at least $2$.
    
\end{itemize}
The Fulton-Macpherson operad, rigorously defined in \cite[Section 3.2]{getzler_jones}, has a heuristic definition as follows:

\begin{dfn}A point in $\mathcal{F}_n(I)$ is represented by an $I$-labeled tree whose root has one outgoing edge together with labels for all nonroot, nonleaf vertices $v$ with values in $F(\mathbb{R}^n, e(v))$. We then quotient the labels of each vertex by translation and positive scaling. The symmetric sequence $\mathcal{F}_n$ is an operad by grafting trees.
\end{dfn}

 We interpret the elements of $\mathcal{F}_n$ as ``infinitesimal configurations" in $\mathbb{R}^n$, modulo the given relations. The label of the vertex adjacent to the root is the ``base configuration'' and, if the tree branches, we imagine that the single point labeled by that edge is actually a configuration in the tangent space, and this may repeat. For a framed manifold $M$, we can construct a right module over this operad, first considered in \cite[Section 5]{markl_1999}.

\begin{dfn}A point in $\mathcal{F}_M(I)$ is represented by an $I$-labeled tree with the root $r$ labeled by $F(M, e(r))$. If $v$ is a nonleaf vertex adjacent to the root, it is labeled by $F(T_p(M), e(v))$ where $p$ is the point in the root configuration labeled by the edge connecting $v$ to the root. Any nonleaf child $v'$ of $v$ is labeled by $F(T_p(M),e(v'))$ for the same $p$, and so on. We then quotient each nonroot, labeled vertex by translation and positive scaling. The symmetric sequence $\mathcal{F}_M$ is a right module over $\mathcal{F}_n$ with partial composition given by grafting trees and using the framing to identify $T_p(M)$ with $\mathbb{R}^n$.
\end{dfn}
We interpret the elements of $\mathcal{F}_{M}$ as ``infinitesimal configurations in $M$''. One important observation is that both $\mathcal{F}_M(I)$ and $\mathcal{F}_n(I)$ are manifolds of dimension $n|I|$ and $n|I|-n-1$, respectively, and the partial composites 
\[\mathcal{F}_n(I) \times \mathcal{F}_n(J) \rightarrow \mathcal{F}_n(I \cup_a J)\]
\[\mathcal{F}_M(I) \times \mathcal{F}_n(J) \rightarrow \mathcal{F}_M(I \cup_a J)\] are inclusions of codimension 0 portions of the boundary. Using the Fulton--MacPherson compactifications, we can formally define a model of $E_{M^+}$ as a right module over $(\mathcal{F}_n)_+ \in \mathrm{Operad}(\mathrm{Top}_*,\wedge)$, the Fulton-MacPherson operad with a disjoint basepoint.

\begin{dfn}
    The right $(\mathcal{F}_n)_+$-module $\mathcal{F}_{M^+}$ in $(\mathrm{Top}_*,\wedge)$ is the levelwise one point compactification of $\mathcal{F}_M$. 

\end{dfn}

\noindent \textbf{Warning:} The space $\mathcal{F}_{M^+}(I)$ does not usually have the homotopy type of any of the following: \[F(M,I)^+,\:  F(M^+,I), \: \mathrm{or}\: \{(x_i) \in (M^+)^I | i \neq j \implies x_i \neq x_j \: \mathrm{or} \: x_i=x_j=* \}.\] Instead it has the homotopy type of \[* \cup \{(x_i) \in (M^+)^{\wedge I} |  x_i = x_j \implies i=j \} \subset (M^+)^{\wedge I}.\] We will always treat the one point compactification $M^+$ as an object distinguished from both manifolds and pointed spaces. Treatments of categories of these so-called ``zero-pointed manifolds'' may be found in \cite{ayala_francis_2019,aroneching2014manifolds}. Finally, we note that $\mathcal{F}_{(\mathbb{R}^n)^+}\not\cong \mathcal{F}_{S^n}$, and this is especially relevant in Lemma \ref{lem:contractible}.

The formal homotopy theory of operads using model categories can be quite complex, but just as for all model categories, it aims to capture the structure of zigzags of morphisms of operads for which the backwards arrows are weak equivalences. We introduce some terminology for zigzags.

\begin{dfn}
A \textbf{zigzag map of operads} is a zigzag of operad maps for which the backwards maps are weak equivalences. Similarly, a \textbf{zigzag equivalence of operads} is a zigzag map of operads which represents an isomorphism in the homotopy category of operads  (or equivalently symmetric sequences) when we invert the backwards maps.
\end{dfn}

\begin{dfn}\label{dfn:zigzag}
A \textbf{zigzag map of right modules} relative to a zigzag map of operads is a zigzag of maps of right modules, compatible with the zigzag of maps of operads, for which the backwards maps are weak equivalences of symmetric sequences. Similarly, a \textbf{zigzag equivalence of right modules} relative to a zigzag equivalence of operads is a zigzag map of right modules which represents an isomorphism in the homotopy category of symmetric sequences when we invert the backwards maps.
\end{dfn}

There is a substantial difference between a zigzag equivalence and a zigzag of equivalences, but in the presence of a model structure the existence of the former implies the existence of the latter.

\section{Ching's conjecture}\label{sec:conjecture}
In Ching's thesis \cite{ching_2005}, he lifted the algebraic theory of Koszul duality to operads in spectra. In particular, he defined a contravariant functor $K:\mathrm{Operad}(\mathrm{Sp},\wedge) \rightarrow \mathrm{Operad}(\mathrm{Sp},\wedge) $, reviewed in Section \ref{sec:koszul}, and showed that the Goodwillie derivatives of the identity are equivalent, as a symmetric sequence, to the spectral Lie operad $\mathrm{lie}:=K(\mathrm{com})$. Later, it was shown that $\partial_*\mathrm{Id}$  forms an operad and the aforementioned equivalence is one of operads. Simultaenously, it was shown the derivatives of functors $\mathrm{Top}_* \rightarrow \mathrm{Sp}$ form right modules over $\partial_* \mathrm{Id}$. These observations were incorporated into the theory of Goodwillie calculus to produce an elegant theory relating Koszul duality to Goodwillie calculus \cite{arone_ching_2011}.

Arone--Ching calculated that as a right module over $\mathrm{lie} \simeq \partial_* \mathrm{Id}$, the derivatives $\partial_*$ of the functor $ \Sigma^\infty \operatorname{Map}_*(X,-)$ are equivalent to the Koszul dual: $K(\Sigma^\infty X^\wedge)$ \cite[Example 17.28]{arone_ching_2011}. Here \[X^\wedge(I):= X^{\wedge I}\] is the right $\mathrm{com}$-module in $(\mathrm{Top}_*,\wedge)$  with partial composites determined by the diagonal. Using the fact that $K:\mathrm{RMod}_O \rightarrow \mathrm{RMod}_{K(O)}$ lifts the Spanier-Whitehead dual of the derived indecomposables (i.e. the quotient by the image of the partial composites), it is straightforward to see that as symmetric sequences $K(X^\wedge)\simeq (X^\wedge/\Delta^{\mathrm{fat}})^\vee$, where the fat diagonal $\Delta^{\mathrm{fat}}$ is the subset of $X^{\wedge I}$ which contains repeated entries (if $|I|=1$, it is defined as $*$). If $X \cong M$ is a compact, framed $n$-manifold, then Atiyah duality yields $(M_+^\wedge/\Delta^{\mathrm{fat}})^\vee \simeq s_{(-n,-n)}\Sigma^\infty_+ F(M,-)$. Hence, up to weak equivalence and a specific type of suspension $s_{(n,n)}$ (Definition \ref{dfn:modSus}), the stabilization of ordered configurations in $M$ is naturally a right module over the derivatives of the identity.

The symmetric sequence of configurations $F(M,-)$ is also equivalent to the configuration space of $n$-disks $E_M$ via radial contraction. Since $M$ is framed, $E_M$ has an action of $E_n$, the little disks operad. Via the self duality of $E_n$
\[K(\Sigma^\infty_+ E_n) \simeq \dots \simeq s_{-n} \Sigma^\infty_+ E_n\]
we can produce a zigzag map $K (\mathrm{com}) \rightarrow s_{-n} \Sigma^\infty_+ E_n$ by taking the Koszul dual of the map $\Sigma^\infty_+ E_n \rightarrow \mathrm{com}$. Restricting along this map gives a right $\mathrm{lie}$-module structure to $s_{(-n,-n)}\Sigma^\infty_+E_M$, and up to weak equivalence and suspension, to $\Sigma^\infty_+F(M,-)$.

Ching conjectured that these two right module structures coincide \cite{ching}. At the time of this conjecture, the self duality of $E_n$ had not been proven and right modules over $E_n$ had not been heavily studied. Recently, the study of the mapping spaces of right modules over $E_n$ has provoked much interest and goes by the name of ``embedding calculus'' \cite{Turchin_2013}. A common situation in embedding calculus is that $M,N$ are framed n-manifolds (often $M$ is a tubular neighborhood of a submanifold of $N$), and we want to understand the derived mapping space of right modules
$\mathrm{Map}^h_{E_n}(E_M,E_N).$
There is map \[\mathrm{Map}^h_{\Sigma^\infty_+ E_n}(\Sigma^\infty_+ E_M,\Sigma^\infty_+ E_N) \rightarrow \mathrm{Map}^h_{\mathrm{com}}(\Sigma^\infty M^\wedge_+, \Sigma^\infty N^\wedge_+)\] obtained by induction along the map $\Sigma^\infty_+ E_n \rightarrow \mathrm{com}$ (which in the model $\mathcal{F}_M$ is simply collapsing the infinitesimal configurations to the fat diagonal). Supposing a noncompact version of Ching's conjecture is true, there is also a restriction map \[\mathrm{Map}^h_{\Sigma^\infty_+ E_n}(\Sigma^\infty_+ E_M,\Sigma^\infty_+ E_N) \rightarrow \mathrm{Map}^h_{\mathrm{lie}}(K(\Sigma^\infty (M^+)^\wedge), K(\Sigma^\infty(N^+)^\wedge)).\] Using the two Koszul dual descriptions of the Goodwillie tower \cite[Example 6.28]{arone_ching_2015}, we can write this in terms of Goodwillie calculus ($P_\infty$) and embedding calculus ($T_\infty$).

\[P_\infty(\Sigma^\infty \mathrm{Map}_*(N^+,-))(M^+) \leftarrow T_\infty(\Sigma^\infty_+ \mathrm{Emb}^{\mathrm{fr}}(-,N))(M) \rightarrow P_{\infty}(\Sigma^\infty \mathrm{Map}_*(M_+,-))(N_+)\]

We will see that the first map can be constructed from a Pontryagin--Thom-esque collapse functor which interchanges the full subcategory of right $\Sigma^\infty_+ E_n$-modules consisting of the $\Sigma^\infty_+ E_M$ with the full subcategory consisting of the $\Sigma^\infty E_{M^+}$; for maps induced by framed embeddings $i:M \rightarrow N$, it can be seen to coincide with the suspension spectrum of the one point compactification $i^+$. For general maps of right modules, it requires the Koszul self duality of $E_M$ and $E_N$ to define.

\begin{remark}
    There is a more general definition of an operad which allows for indexing by the empty set. Our category of operads in spectra embeds into this category of ``unital operads'' by setting $O(\emptyset)=*$, the zero object of spectra. In the setting of Goodwillie calculus, the operads $\mathrm{lie}$ and $\mathrm{com}$ are taken with the assumption that $\mathrm{lie}(\emptyset)=\mathrm{com}(\emptyset)=*$. In contrast, the unstable embedding tower constructed by Weiss uses $ E_n(\emptyset)= *$, which is instead the monoidal unit of $(\mathrm{Top},\times)$, and so produces a tower of a somewhat different flavor.
\end{remark}





\section{Parametrized spectra }\label{sec:parsp}
In order to give an interpretation of Koszul duality in terms of operad structures on the Spivak normal fibration, we need a suitably functorial construction of the Spivak normal fibration associated to a pair $(X,A)$. Spivak investigated these fibrations and found they were intimately related to relative Poincaré duality for $(X,A)$ \cite{spivak_1967}. His construction involved taking relative embeddings of $(X,A)$ into $(D^n,\partial D^n)$, making the regular neighborhood into a fibration, and restricting to the boundary fibration. Klein found a functorial description of these boundary fibrations in terms of Borel $\Omega X$-spectra \cite{klein_2001}. Here the adjective Borel indicates the objects of the category are spectra with a group action and the weak equivalences are equivariant maps which are underlying weak equivalences of spectra. In \cite{klein_2007}, he supplied a partial translation of this work into the category of parametrized spectra. 

In this section, we review basic notions of parametrized spectra necessary to finish translating Klein's work on the Spivak normal fibration. We use the construction of parametrized orthogonal spectra due to May-Sigurdsson \cite{may_sigurdsson_2006}. These parametrized spectra have a complex homotopy theory, which we will use only a fraction of.

\begin{dfn}
For $X \in \mathrm{Top}$, the category $\mathrm{Top}_X$ of ex-spaces over $X$ has objects given by retractions to $X$, i.e. diagrams 

\[\begin{tikzcd}
	Z & X
	\arrow["r_Z",bend right=30,  from=1-1, to=1-2]
	\arrow["s_Z",bend right=30, from=1-2, to=1-1]
\end{tikzcd}\]
such that $r_Z \circ s_Z = \mathrm{Id}_X$. We call $Z$ the total space and $X$ the base space. A map of ex-spaces is given by a map of total spaces covering the identity of the base so that the obvious diagrams commute.
\end{dfn}

When referring to ex-spaces, we will usually leave the section implicit and often the retraction as well if there will be no confusion. We will need some basic constructions in the category ex-spaces in order to make constructions in the category of parametrized spectra. Given an ex-space $E \rightarrow X$ and a map $f: Y \rightarrow X$, we shall denote the pullback ex-space over $Y$ by $f^*(E)$. If $f$ is an inclusion, we will sometimes write $E|_Y$. If $Y$ is a point, this is referred to as a fiber.

\begin{dfn}
The external smash product $E \bar\wedge F$ of an ex-space $E$ over $X$ and an ex-space $F$ over $Y$ is given by
\[(E \times F)/ \sim \:\rightarrow X \times Y\]

where $\sim$ is the equivalence relation identifying $E|_x \vee F|_y \subset E|_x \times F|_y$ to a point inside each fiber. There is a natural section making this an ex-space by sending $(x,y)$ to the basepoint of the smash product of the fibers.
\end{dfn}

\begin{dfn}
The internal smash product $E \wedge F$ of ex-spaces $E,F$ over $X$ is \[E \wedge F :=E \bar\wedge F|_{\Delta(X)},\] where $\Delta(X) \subset X \times X$ is the diagonal, i.e $\{(x,x) \in X \times X\}$.
\end{dfn}

\noindent In other words, both internal and external smash products are computed fiberwise, but the internal smash product requires the fibers lie over the same point.

\begin{dfn}
The relative mapping space $\operatorname{Map}^A_X(E,F)$ of two ex-spaces $E,F$ over $X \supset A$  is the subspace of ex-space maps which when restricted to $A$ factor through the section of $F$. Explicitly, if $\operatorname{Map}_X(E,F)$ is the space of ex-space maps, then $\phi \in \operatorname{Map}_X^A(E,F) \subset \operatorname{Map}_X(E,F)$, if and only if \[\phi|_{E|_A}=s_F\circ (r_E)|_{E|_A}.\]
\end{dfn}
\noindent  We use the convention that mapping spaces are always referred to by $\operatorname{Map}(-,-)$ and mapping spectra are always referred to by $F(-,-)$.

\begin{dfn}
The category $\mathrm{Sp}_X$ of orthogonal parametrized spectra over $X$ has objects given by orthogonal sequences of ex-spaces, i.e. for all $n\geq 0$ an ex-space $Z_n$ with an action of $O(n)$ through ex-space automorphisms, together with $O(n) \times O(m)$ equivariant maps: \[\sigma^m_n:Z_n \bar\wedge (S^{m} \rightarrow *) \rightarrow Z_{n+m}\]
These maps should be unital, i.e. $\sigma^0_n =\mathrm{Id}$, and associative. Morphisms of orthogonal parametrized spectra are given by collection of ex-space maps that respect all the given structure.

\end{dfn}

The construction $(-)\bar\wedge (S^{m} \rightarrow *)$ will often be referred to as fiberwise $n$-fold suspension. Our main source of parametrized spectra are the fiberwise suspension spectra of ex-spaces which are defined at the $n$th ex-space by fiberwise $n$-fold suspension. Given an ex-space $E$ over $X$, we denote the fiberwise suspension spectrum $\Sigma^\infty E$; similarly $\Sigma^n E$ will denote fiberwise $n$-fold suspension. Context will always discern suspension spectra from parametrized suspension spectra and $n$-fold suspension from fiberwise $n$-fold suspension. If we merely have a map $Z \rightarrow Y$ with no section, we let $\Sigma^\infty_Y Z$ denote the suspension spectrum of the ex-space $Z \sqcup Y \rightarrow Y$, with the obvious section.

Given an ex-space $E$ over $X$ and an ex-space $F$ over $Y$, there is an evident notion of a generalized morphism from $E$ to $F$. It is a pair of maps $E \rightarrow F$, $X \rightarrow Y$ such that all the obvious squares commute.

\begin{dfn}
The category $\mathrm{ParSp}$ of orthogonal parametrized spectra has objects the parametrized spectra over $X$, as $X$ varies over all spaces. A morphism from a parametrized spectrum $p$ over $X$ to $q$ over $Y$ is a map $f:X\rightarrow Y$ and generalized ex-space maps over $f$, $p(n) \rightarrow q(n)$ which commute with all the structure.
\end{dfn}

\begin{dfn}
The functor $\mathrm{Base}:\mathrm{ParSp}\rightarrow \mathrm{Top}$ is given by taking a morphism of parametrized spectra
\begin{center}
\begin{tikzcd}
p \arrow[r] \arrow[d, Rightarrow] & q \arrow[d, Rightarrow] \\
X \arrow[r, "f"]                  & Y                      
\end{tikzcd}
\end{center}
to the map $f:X \rightarrow Y$.
\end{dfn}

\begin{dfn}
Given a map $g:X \rightarrow Y$ and a parametrized spectrum $p$ over $Y$, let $g^*(p)$ be the parametrized spectrum over $X$ defined by the pullbacks
\[g^*(p)(n):=g^*(p(n) \rightarrow Y).\]
\end{dfn}
A morphism $p$ to $q$ in $\mathrm{ParSp}$ is equivalently a morphism $p \rightarrow f^*(q)$ in $\mathrm{Sp}_X$. In the case $g:X \rightarrow Y$ is an inclusion, we denote $g^*(q)$ by $q|_{X}$. If $X=*$, we refer to this as a fiber, and it has a natural interpretation as an orthogonal spectrum. In general, there is no relation between the fibers over different points of a parametrized spectrum. However, every parametrized spectrum $E$ is levelwise equivalent to a parametrized spectrum $\widebar{E}$ over the same base that does have equivalent fibers when restricted to each path component of the base. Recall the usual functor $\mathrm{PathFib}$ which replaces a map by an equivalent fibration.

\begin{dfn}
For a parametrized spectrum $p$ over $X$, we define $\mathrm{fib}(p)$ by
\[\mathrm{fib}(p)(n)= \mathrm{PathFib}(p(n) \rightarrow X).\]
It has evident $O(n)$-actions, sections, and structure maps.
\end{dfn}

In general, if a parametrized spectrum levelwise consists of fibrations, the homotopy types of the fibers depends only on the path component, and one can extract comparisons between the fibers over $x$ and $y$ via paths from $x$ to $y$. Treating this thought extremely carefully, for $X$ connected and $G=\Omega X$ one can produce an equivalence between $\mathrm{Sp}_X$ and Borel $G$-spectra $\mathrm{Sp}^{BG}$, which is done in \cite{lind_malkiewich_2018}:

\begin{center}
\begin{tikzcd}
\mathrm{ParSp}_{BG} \arrow[rr, "\mathrm{monodromy}", bend left] &  & \mathrm{Sp}^{BG}. \arrow[ll, "\mathrm{Borel}\:\mathrm{construction}", bend left, shift left]
\end{tikzcd}
\end{center}

We will give formal definitions of the generalized Thom complex and sections of a parametrized spectrum, but one might already guess that under this correspondence they are homotopy orbits and homotopy fixed points, respectively. Indeed, unstably and in unbased spaces, homotopy orbits fiber over $BG$; when we instead work in pointed spaces, homotopy orbits fiber over $BG$, except at a singular point. This corresponds to the collapse point of the generalized, unstable Thom complex. A similar observation can be made in the case of homotopy fixed points and section spaces. But in order to even talk about such comparisons, we need to introduce the homotopy theory of parametrized spectra.

\begin{dfn}
For a parametrized spectrum $p$ over $ X \ni x$, we let $\pi_n(p,x):=\pi_n(\mathrm{fib}(p)|_x)$. 
\end{dfn}

\begin{dfn}
A weak equivalence of parametrized spectra over $X$ is a map of parametrized spectra over $X$ that induces isomorphisms on all homotopy groups with all basepoints.
\end{dfn}

There is a symmetric monoidal product $\bar{\wedge}$ on $\mathrm{ParSp}$ called the external smash product. It is given by the same coequalizer formula as in the nonparametrized case, but replacing spaces with ex-spaces and using the external smash product of ex-spaces. Since restriction to a point $x$ commutes with (co)limits, as it has both a left and a right adjoint, this implies that the external smash product is computed fiberwise, and it has a unit $\Sigma^\infty (S^0 \rightarrow *)=\Sigma^\infty_* *$. 

\begin{dfn}
    For a parametrized spectrum $p$ and a spectrum $Z$, the fiberwise smash product is \[p \bar\wedge Z:= p \bar\wedge (Z \rightarrow *).\]
\end{dfn}

There is also an internal smash product $\wedge$ on $\mathrm{Sp}_X$ which pulls back the external smash product along $\Delta$. It has unit given by $\Sigma^\infty( X \times S^0)=\Sigma^\infty_X X$. This makes the category $\mathrm{Sp}_X$ into a symmetric monoidal category. In fact, this category is closed symmetric monoidal with an internal parametrized mapping spectrum $F(-,-)$ which is fiberwise computed as a mapping spectrum in the category of orthogonal spectra \cite[Theorem 11.2.5]{may_sigurdsson_2006}. This implies that there are evaluation and composition morphisms for $F(-,-)$ which agree fiberwise with those of $\mathrm{Sp}$.

 In the literature, the unit $\Sigma^\infty_X X$ is often referred to by $S^0_X$, we avoid this as it clashes with the convention that a subscript $X$ means adding a disjoint section.  In the same vein, we explicitly write the unit of $(\mathrm{Top}_X,\wedge)$ as $X \sqcup X$.

\begin{dfn}\label{dfn:relativesections}
If $p,q$ are parametrized spectra over $X \supset A$, the spectrum $F^A_X(p,q)$ of relative maps over $X$ is given by
\[F^A_X(p,q)(n):=\operatorname{Map}^A_X(X \sqcup X,F(p,q)(n))\]
If $p= \Sigma^\infty_X X$, we denote this $\Gamma^A(q)$, the relative sections.
\end{dfn}

Note that since $\mathrm{ParSp}_X$ is closed symmetric monoidal, $F(\Sigma^\infty_X X , q)\cong q$, so $\Gamma^A(q)$ coincides with the spectrum which at level $n$ is $\operatorname{Map}^A_X(X \sqcup X,q(n))$, i.e. the sections of $q(n)$. In terms of base change functors (i.e. the adjoints $f_{!},f_*$ of restriction $f^*$ \cite[Section 11.4]{may_sigurdsson_2006}), if $c$ denotes the constant map to a point there is an equality $F_X(p,q)= c_* F(p,q)$.  We note that $\Gamma^A (\Sigma^\infty_X X)$ is isomorphic to \[(\Sigma^\infty X/A)^\vee := F(\Sigma^\infty X/A,S^0).\] 

\begin{prp}\label{prp:comp}
If $p,q,r$ are parametrized spectra over $X \supset A$, there is a natural composition map

\[F^A_X(p,q) \wedge F_X(q,r) \rightarrow F^A_X(p,r).\]

\end{prp}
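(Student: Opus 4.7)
The plan is to construct the composition as a two-step composite: a lax-monoidality pairing for the relative-sections functor, followed by functoriality of relative sections applied to the internal composition in $(\mathrm{Sp}_X,\wedge)$. On fibers, the resulting map should reduce to the usual composition of mapping spectra.

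For any pair of parametrized spectra $E,F$ over $X \supset A$, I will construct a natural pairing in $\mathrm{Sp}$
\[\nu_{E,F}: \Gamma^A(E) \wedge \Gamma(F) \longrightarrow \Gamma^A(E \wedge F).\]
At spectrum level $p+q$, this sends a pair of sections $\phi: X \sqcup X \to E(p)$ (relative to $A$) and $\psi: X \sqcup X \to F(q)$ to the composite
\[X \sqcup X \xrightarrow{\phi \wedge \psi} E(p) \wedge F(q) \longrightarrow (E \wedge F)(p+q),\]
where $\phi \wedge \psi$ is the internal smash of ex-space sections over $X$ and the last map is a universal structure map of the orthogonal smash product. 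The relativity condition is preserved because $\wedge$ on $\mathrm{Sp}_X$ is computed fiberwise: over any $a \in A$, the fiber of $\phi$ is a basepoint, hence so is that of $\phi \wedge \psi$. Compatibility with $O(p) \times O(q)$-equivariance and with the spectrum structure maps reduces to the analogous statements for the internal smash of ex-spaces, after which one descends through the coend defining $\wedge$ on orthogonal spectra.

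Second, because $(\mathrm{Sp}_X, \wedge)$ is closed symmetric monoidal, standard adjunctions produce an internal composition morphism in $\mathrm{Sp}_X$
\[\mu: F(p,q) \wedge F(q,r) \longrightarrow F(p,r),\]
agreeing fiberwise with the usual composition of mapping spectra by \cite[Theorem 11.2.5]{may_sigurdsson_2006}. The desired composition map is
\[F^A_X(p,q) \wedge F_X(q,r) \xrightarrow{\nu_{F(p,q),F(q,r)}} \Gamma^A\bigl(F(p,q) \wedge F(q,r)\bigr) \xrightarrow{\Gamma^A(\mu)} \Gamma^A\bigl(F(p,r)\bigr) = F^A_X(p,r),\]
and naturality in $p, q, r$ follows formally from the naturality of $\nu$ and $\mu$.

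The only real obstacle is bookkeeping in the first step: verifying that $\nu_{E,F}$ is a well-defined morphism of orthogonal spectra, i.e., that it descends through the coend defining the smash product of orthogonal spectra and is compatible with the symmetric group actions and the spectrum structure maps. Since everything ultimately reduces to the fiberwise nature of $\wedge$ on $\mathrm{Sp}_X$ and to the standard lax monoidality of pointed sections of ex-spaces, the required diagrams commute; the check is routine but slightly tedious.
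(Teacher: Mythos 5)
Your proposal is correct and follows essentially the same route as the paper: the paper also factors the map as a lax-monoidality pairing for sections (which it constructs via the adjunction $c^* \dashv c_*$ but then observes is exactly the pointwise smash of sections, as in your $\nu$) followed by the internal composition $F(p,q)\wedge F(q,r)\to F(p,r)$ coming from the closed symmetric monoidal structure of $\mathrm{Sp}_X$, with relativity handled fiberwise just as you do.
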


\begin{proof}
It suffices to examine how taking levelwise section spaces interacts with internal smash products of parametrized spectra since there is a composition map $F(p,q) \wedge F(q,r) \rightarrow F(p,r)$ since $\mathrm{Sp}_X$ is closed symmetric monoidal. In particular, we want to construct a map \[c_*(p) \wedge c_*(q) \rightarrow c_*(p \wedge q).\] 
By adjointness, this is equivalent to a map of parametrized spectra
\[ c^* (c_*(p) \wedge c_*(q)) \rightarrow p \wedge q\]

The domain is the ``trivial'' parametrized spectrum with fiber $c_*(p) \wedge c_*(q)$ over every point. To construct the above map, for every pair $(c_*(p) \wedge c_*(q),x)$ we should construct a map of spectra to $p|_x \wedge q|_x$. This map is given by taking the smash product of the maps 
\[c_*(p) \rightarrow p|_x\]
\[c_*(q)\rightarrow q|_x\]
given by evaluating the section at $x$. In other words, the original map \[c_*(p) \wedge c_*(q) \rightarrow c_*(p \wedge q)\] is computed by pointwise smashing the sections. The relativity of the statement follows immediately from this description.
\end{proof}


\begin{dfn}
Suppose we have a parametrized spectrum $p$ over $X$ and a parametrized spectrum $q$ over $X \times X$, and $A \subset X$. For $x\in X$, let $q^x$ denote the restriction $q|_{\{x\} \times X}$; we define parametrized spectra over $X$ in terms of their fibers:

\[\operatorname{F}^A_{X}(p,q)|_{x}:= \operatorname{F}^A_{X}(p,q^{x})\]
\[\operatorname{F}^A_{X}(q,p)|_{x}:= \operatorname{F}^A_{X}(q^{x},p)\]

\end{dfn}
In the case $A$ is empty, we leave it off the notation. If the domain is $\Sigma^\infty_X X$, we denote this by $\Gamma^A(-)$ and distinguish it from Definition \ref{dfn:relativesections} based on whether the parametrized spectrum is over $X$ or $X \times X$. 

\begin{remark}
In comparison to the category of Borel $G$-spectra, parametrized spectra over $X \times X$ play the role of spectra with a two sided action of $G$. The fact that the ``parametrized maps of parametrized spectra'' construction above leaves us with a parametrized spectrum over $X$ corresponds to the fact that spectrum of equivariant maps into or out of a two sided $G$-spectrum still forms a $G$-spectrum. If one was to do substantial homotopy theory with this construction, it would be helpful to also consider the slices $X \times \{x\}$. Passing back and forth between spectra over $\{x\} \times X$ and spectra over $X \times \{x\}$ is analogous to passing between $G$-spectra and $G^\mathrm{op}$-spectra.
\end{remark}
An ex-space is f-cofibrant if the distinguished section is an NDR (neighborhood deformation retract) such that the deformation retraction respects the fibers, and a parametrized spectrum is level-f-cofibrant if each of its ex-spaces is f-cofibrant.

 \begin{lem}\label{lem:cofib}
 Given a map $E \rightarrow X \times X$, the parametrized spectrum $\Gamma^A(\Sigma^\infty_{X \times X} E)$ is level-f-cofibrant.
 \end{lem}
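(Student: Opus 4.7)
The plan is to prove level-by-level f-cofibrancy by transferring the canonical NDR structure present on fiberwise suspensions to the section space. At level $n$, the ex-space $\Gamma^A(\Sigma^n_X E)$ over $X$ has, at the fiber over $x$, the pointed space of relative sections $s : X \to \Sigma^n_X E_x$ that vanish on $A$, with distinguished section the constant-basepoint section. Our job is to equip this ex-space with a fiberwise NDR structure on that distinguished section.

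The key observation is that $\Sigma^n_X E = (E \sqcup (X \times X)) \bar\wedge (S^n \to *)$, viewed as an ex-space over $X \times X$, carries an obvious fiberwise NDR structure $(u,h)$ on its cone-point section. Writing a typical fiber as $E_p^+ \wedge S^n$, both $u$ and $h$ are defined purely in terms of the suspension coordinate on $S^n$: $u$ vanishes at the cone point, equals $1$ outside a ``lower hemisphere'' neighborhood, and $h_t$ deformation retracts the lower hemisphere onto the basepoint. For any relative section $s \in \Gamma^A(\Sigma^n_X E_x)$, the formula $\tilde h(s,t)(y) := h(s(y),t)$ defines another relative section, because $h$ is fiberwise over $X$ (so fiberwise over $\{x\} \times X$); it fixes the basepoint section and contracts to it any section whose image lies entirely in the lower hemisphere. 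Continuity of $\tilde h$ in $x$ follows from continuity of $h$ on all of $\Sigma^n_X E$ over $X \times X$.

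The main obstacle will be constructing a continuous companion NDR function $\tilde u$ on the section space: the naive choice $\tilde u(s) = \sup_{y \in X} u(s(y))$ fails to be continuous in the compact-open topology when $X$ is non-compact. I would address this by exhausting $X = \bigcup_i K_i$ by compacta with a subordinate partition of unity $\{\phi_i\}$, and defining $\tilde u(s) := \sum_i \phi_i(\cdot)\, \sup_{y \in K_i} u(s(y))$, each local supremum being continuous on the compact-open topology; alternatively, one can directly use the compactly generated mapping space structure used throughout the paper, in which case such suprema of continuous maps remain continuous upon passage to compacts. Either arrangement ensures that $\tilde u^{-1}(0)$ is precisely the basepoint section, and that $\tilde u(s) < 1$ forces $\tilde h(s, 1)$ to land in the basepoint section, completing the NDR datum. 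Once $(\tilde u, \tilde h)$ is in place, fiberwise continuity in $x$ is inherited automatically from the parametric nature of the original structure on $\Sigma^\infty_{X \times X} E$, so the construction assembles into a fiberwise NDR on $\Gamma^A(\Sigma^n_X E) \to X$ for every $n$, as required.
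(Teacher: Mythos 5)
Your deformation $\tilde h$ --- post-composing a section with the fiberwise retraction of the suspension coordinate onto the cone point --- is exactly the mechanism of the paper's proof, which (after disposing of level $0$ separately) exhibits the subspace $D$ of sections taking values with suspension coordinate in $[0,0.1]\cup[0.9,1]$ and retracts it fiberwise onto the distinguished section. So your core idea matches the paper's; the difference is that the paper works directly with the neighborhood $D$ and never manufactures a global NDR function, whereas you try to produce the pair $(\tilde u,\tilde h)$ explicitly. You are right that this is where the difficulty sits for non-compact $X$, but your proposed constructions of $\tilde u$ do not close the gap.

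Read literally, $\tilde u(s)=\sum_i\phi_i(\cdot)\sup_{y\in K_i}u(s(y))$ is a function on $X$ rather than a number; if you mean $\tilde u(x,s)=\sum_i\phi_i(x)\sup_{y\in K_i}u(s(y))$, then $\tilde u(x,s)=0$ no longer forces $s$ to be the basepoint section, since it only constrains $s$ on the finitely many $K_i$ with $\phi_i(x)\neq 0$. If instead you intend a fixed convex combination $\sum_i c_i\sup_{K_i}u(s(y))$, then $\tilde u^{-1}(0)$ and continuity are fine, but the essential NDR implication ``$\tilde u(s)<1\Rightarrow\tilde h_1(s)$ lies in the distinguished section'' fails: a section with $u(s(y_0))=1$ at a single point $y_0$ and $u\circ s$ small elsewhere has $\tilde u(s)<1$, yet $h_1(s(y_0))$ need not be the cone point. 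The compactly generated escape hatch also fails: for $X=\mathbb{R}$ the sections $s_m$ that are the cone point on $[-m,m]$ and reach suspension coordinate $1/2$ outside converge to the basepoint section in the compact-open topology, so they form with their limit a compact set on which $s\mapsto\sup_{y\in X}u(s(y))$ is discontinuous. To finish along your lines you need either a hypothesis making the global supremum continuous (e.g.\ $X$ compact, which covers the paper's applications to $W$-constructions of homotopy-finite operads) or the halo-style argument of the paper that retracts $D$ directly; as written, the NDR datum is not constructed.
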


 \begin{proof}
 Of course the zeroth ex-space is f-cofibrant. Now assume $n>0$. Let $D \subset \Gamma^A(\Sigma^\infty_{X \times X} E)(n)$ denote the subspace which consist of sections which only take on values which have a suspension coordinate in $[0,.1] \cup [.9,1]$. This neighborhood witnesses the image of the preferred section $X \rightarrow \Gamma^A(\Sigma^\infty_{X \times X} E)(n)$ as a fiberwise neighborhood deformation retract.
 \end{proof}

We refer to the weaker property of all the distinguished sections being Hurewicz cofibrations as being level-h-cofibrant. Similarly, if all the retractions are Hurewicz fibrations, we call the spectrum level-h-fibrant. In practice, if one shows a spectrum is level-h-cofibrant by hand, almost certainly one has actually shown the stronger statement that it is level-f-cofibrant.

\begin{dfn}
 For a parametrized spectrum $p$ over $X$, the generalized Thom complex $\mathrm{Th}(p)$ is the spectrum given by $\mathrm{Th}(p)(n):=p(n)/\mathrm{im}(s_{p(n)}).$
\end{dfn}

To construct a map out of a Thom complex of $p$ into a spectrum $H$, it suffices to supply a family of spectrum maps out of all the fibers, i.e. a map of parametrized spectra from $p$ to $X \bar\wedge H:= \Sigma^\infty_X X \bar\wedge (H \rightarrow *)$. Recall our name for the constant map $c:X \rightarrow *$. In terms of base change adjunctions, we have that $\mathrm{Th}(p)= c_!(p)$.
It is not obvious, but Thom complexes preserve weak equivalences of level-h-cofibrant (and thus level-f-cofibrant) parametrized spectra \cite[Proposition 6.4.5]{malkbasic}

The following is found in \cite[Proposition 5.6.1]{malkbasic}:
\begin{prp}\label{prp:wedge}
The natural map $ \mathrm{Th} (p\bar\wedge q) \rightarrow \mathrm{Th}(p) \wedge \mathrm{Th}(q) $, induced by the inclusions $p|_x \wedge p|_y \rightarrow \mathrm{Th} (p\bar\wedge q)$ of fibers into the Thom complex, is an isomorphism.
\end{prp}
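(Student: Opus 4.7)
The plan is to reduce to the ex-space level and then upgrade using the fact that $\mathrm{Th}$ is a left adjoint. First, I would check the ex-space version: for $E$ an ex-space over $X$ and $F$ an ex-space over $Y$, the total space of $E \bar\wedge F$ is obtained from $E \times F$ by collapsing $s_E(X) \times F \cup E \times s_F(Y)$ fiberwise to the distinguished section, which is a copy of $X \times Y$. Forming the Thom complex then collapses this entire section to a basepoint, and the composite quotient is precisely $(E \times F)/(s_E(X) \times F \cup E \times s_F(Y)) \cong (E/s_E(X)) \wedge (F/s_F(Y)) = \mathrm{Th}(E) \wedge \mathrm{Th}(F)$. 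Under this identification, the comparison map of the statement is literally the identity.

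To promote this to parametrized spectra, I would use that $\mathrm{Th}(-) = c_!(-)$ for $c\colon X \to *$ the collapse map; in particular $\mathrm{Th}$ is a left adjoint and so preserves all colimits. The level-$n$ ex-space of the external smash product $(p \bar\wedge q)(n)$ is constructed as a coequalizer of $O(a) \times O(b)$-equivariant data built from the ex-space external smash products $p(a) \bar\wedge q(b)$ for $a + b = n$, induced up to $O(n)$ and quotiented by the structure maps of $p$ and $q$. Applying $\mathrm{Th}$ levelwise, using the ex-space identification above together with the obvious $\mathrm{Th}\bigl((-) \bar\wedge (Z \to *)\bigr) \cong \mathrm{Th}(-) \wedge Z$ for a pointed space $Z$, converts this coequalizer into exactly the one defining $(\mathrm{Th}(p) \wedge \mathrm{Th}(q))(n)$ in orthogonal spectra. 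Naturality of the ex-space isomorphism in $p$ and $q$ makes this compatible with the spectrum structure maps and the orthogonal group actions.

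The only bookkeeping to keep honest is the comparison between the base change $X \times Y \to *$ performed directly and the factored composition $X \times Y \to X \to *$ (or through $Y$); both land in orthogonal spectra, and the ex-space computation shows they agree on the nose. I do not anticipate a substantive obstacle beyond this definitional unwinding, since the statement is essentially the strong symmetric monoidality of $c_!$ with respect to $\bar\wedge$, and the point-set transparency of external smash of ex-spaces makes the required isomorphisms manifest rather than homotopical.
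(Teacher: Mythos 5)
Your argument is correct. Note that the paper does not actually prove this proposition; it simply cites \cite[Proposition 5.6.1]{malkbasic}, so you have supplied the proof that the paper outsources. What you wrote is essentially the standard argument behind that citation: the statement is the strong symmetric monoidality of $c_!=\mathrm{Th}$ with respect to $\bar\wedge$. Your ex-space computation is right --- the composite quotient $E\times F\to E\,\bar\wedge\, F\to \mathrm{Th}(E\,\bar\wedge\, F)$ collapses exactly $s_E(X)\times F\cup E\times s_F(Y)$ to a point, and the identification $(E\times F)/(s_E(X)\times F\cup E\times s_F(Y))\cong \mathrm{Th}(E)\wedge\mathrm{Th}(F)$ is the usual pushout-product homeomorphism, which needs the quotient topology on the product to behave well; this is covered by the paper's standing assumption of a convenient category of spaces. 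The promotion to orthogonal parametrized spectra via the coequalizer presentation of $\bar\wedge$ and the colimit-preservation of the left adjoint $c_!$ is also sound and matches the paper's description of the external smash product as "the same coequalizer formula as in the nonparametrized case." Your final check that the resulting isomorphism restricts on each fiber $p|_x\wedge q|_y$ to the smash of the fiber inclusions is exactly what identifies it with the map in the statement, so nothing is missing.
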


Since the category $(\mathrm{ParSp},\bar\wedge)$ is symmetric monoidal, we may consider operads in $\mathrm{ParSp}$ using our usual partial composite definition. We make a few simplifying assumptions:

\begin{dfn}\label{dfn:resOp}
The category $\mathrm{ResOp}$ is the restricted category of operads in $\mathrm{ParSp}$. It has objects given by operads $O$ in $(\mathrm{ParSp},\bar\wedge)$ consisting of level-h-cofibrant parametrized spectra, and its morphisms are given by operad morphisms which levelwise are contained in $\mathrm{Sp}_{\mathrm{Base}(O(I))}$. A weak equivalence is an operad map which for each $I$ is a weak equivalence of parametrized spectra over $\mathrm{Base}(O(I))$.
\end{dfn}

\begin{dfn}\label{dfn:resMod}
The category $\mathrm{ResMod_O}$ is the restricted category of right modules over $O \in \mathrm{ResOp}$. It has objects given by right modules over $O$ in $\mathrm{ParSp}$ consisting of level-h-cofibrant parametrized spectra, and its morphisms are given by right module morphisms which levelwise are contained in $\mathrm{Sp}_{\mathrm{Base}(R(I))}$. A weak equivalence is a right module map which for all $I$ is a weak equivalence of parametrized spectra over $\mathrm{Base}(R)(I)$. 
\end{dfn}
From now on, all maps of operads and right modules in parametrized spectra are assumed to lie in $\mathrm{ResOp},\mathrm{ResMod}_O$. Note that any operad $O$ in $\mathrm{ResOp}$ has an associated operad $\mathrm{Base}(O)$ in $(\mathrm{Top},\times)$. The restricted morphisms cover the identity of $\mathrm{Base}(O)$. The same is true for restricted right modules.

\begin{dfn} \label{dfn:thomOp}
For an operad $O$ in $\mathrm{ResOp}$, $\mathrm{Th}(O)$ is the operad in $(\mathrm{Sp}_*,\wedge)$ given by \[\mathrm{Th}(O)(I) :=\mathrm{Th}(O(I)) .\]
The partial composites are given by taking Thom complexes of the partial composites of $O$ and applying the inverse of the isomorphism of Proposition \ref{prp:wedge}.
\end{dfn}

\begin{dfn}\label{dfn:thomMod}
For a right module $R$ in $\mathrm{ResMod}_O$, $\mathrm{Th}(R)$ is the right module over $\mathrm{Th}(O)$ given by \[\mathrm{Th}(R)(I) :=\mathrm{Th}(R(I)) .\]
\end{dfn}




\section{Verdier Duality}\label{sec:verdier}

Verdier duality is the process of encoding a colimit preserving functor on a category $C$ in terms of an object $c \in C$. Topological Verdier duality, in its simplest form, has many formulations: in terms of spectra valued presheaves on $\infty$-groupoids \cite{lurieVerd}, locally constant sheaves of spectra  \cite{Volpe}\cite[Section 5.5.5]{lurieHA}, and Borel $G$-spectra. In this last setting, Klein develops much of the theory of Verdier duality in the case of the functor \[\mathrm{hofib}((-)^{hG} \rightarrow (-)^{hH}):\mathrm{Sp}^{BG} \rightarrow \mathrm{Sp}\]
associated to a topological group map $H \rightarrow G$ \cite{klein_2001,klein_2007}.

\begin{dfn}
Let $P$ be the parametrized spectrum over $X \times X$ given by \[ \Sigma^\infty_{X \times X} (\mathrm{ev}_0 \times \mathrm{ev}_1:\mathrm{Path}(X) \rightarrow X \times X).\]
In other words, $P|_{(x,y)}= \Sigma^\infty_+ \mathrm{Path}(x,y)$
\end{dfn}

\begin{lem}\label{lem:free}

For a parametrized spectrum $q$ over $X$ which is level-h-fibrant, there is an equivalence of parametrized spectra \[F_{X}(P,q) \xrightarrow{\simeq} q\]
which on the fiber over $x$ is induced by evaluation at the constant path at $x$. 
\end{lem}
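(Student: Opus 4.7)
\begin{prf}
The plan is to reduce the computation of $F_X(P^x,q)$ to a sections computation over a contractible space by using the standard push-pull adjunction for parametrized spectra, and then exploit the level-fibrancy hypothesis on $q$ together with the contractibility of $P_xX$.

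First I would rewrite $P^x$ as a pushforward. Let $f_x := \mathrm{ev}_1 : P_xX \to X$, where $P_xX$ denotes the space of paths in $X$ starting at $x$. By construction, $P^x$ is the fiberwise suspension spectrum of the unsectioned ex-space $P_xX \to X$, which is exactly $(f_x)_! \Sigma^\infty_{P_xX} P_xX$. Using the projection formula $F((f_x)_! A, q) \simeq (f_x)_* F(A, f_x^* q)$ and taking global sections, one obtains a natural identification
\[F_X(P^x, q) \;\simeq\; \Gamma_{P_xX}(f_x^* q).\]
This is the first half of the argument; the point is that mapping out of a fiberwise suspension spectrum along $f_x$ computes as plain sections of the pullback.

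Next I would use contractibility. The space $P_xX$ is contractible via the standard reparametrization $H(\gamma,s)(t) = \gamma\bigl(t(1-s)\bigr)$, which contracts $P_xX$ onto the constant path $c_x$. Since $q$ is level-fibrant (and level-cofibrancy / nice sections behave well under pullback), $f_x^* q$ is a level-fibrant parametrized spectrum over a contractible base. The general principle that sections of a fibrant parametrized spectrum over a contractible space are weakly equivalent, via restriction, to any fiber then yields a natural weak equivalence
\[\Gamma_{P_xX}(f_x^* q) \xrightarrow{\simeq} (f_x^* q)|_{c_x} \;=\; q|_{f_x(c_x)} \;=\; q|_x,\]
and this restriction map is exactly evaluation at the constant path $c_x$, matching the description in the statement.

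Finally, these identifications are manifestly natural in $x$, so they assemble into a map of parametrized spectra $F_X(P,q) \to q$ over $X$ which is a fiberwise weak equivalence, hence an equivalence of parametrized spectra in the sense of the excerpt. The main subtlety I anticipate is bookkeeping the push-pull adjunction inside May--Sigurdsson's model (and checking that the necessary cofibrancy/fibrancy hypotheses are preserved under $f_x^*$), rather than any genuinely homotopical obstacle: the geometric input is just the contractibility of $P_xX$, so once the projection formula is in hand, the argument is essentially formal.
\end{prf}
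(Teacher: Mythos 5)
Your proposal is correct and follows essentially the same route as the paper: both identify the fiber $F_X(P^x,q)$ with sections of the pullback of $q$ over the contractible based path space and then conclude by restricting to the fiber over the constant path. The only real difference is that the paper proves by hand (via an explicit fibration over $\mathrm{Path}(x,-)$ and a homotopy-fiber computation) the "general principle" you cite as a black box, namely that sections of a level-fibrant parametrized spectrum over a contractible base restrict equivalently to a fiber.
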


\begin{proof}

Since $P$ is a fiberwise suspension spectrum,  $F_X(P,q)$ can be computed in an easy way. Fiberwise it may be written as the spectrum which has its $n$th space \[\mathrm{Map}_X( X \sqcup \mathrm{Path}(x,-), q(n)).\] Since $q$ is level-h-fibrant, the question is implied by showing evaluation at the constant path is an equivalence \[\mathrm{Map}_X( X \sqcup \mathrm{Path}(x,-), q(n))\rightarrow q(n)|_x.\] 

 \noindent Since we are working stably, it is no loss to assume the fibers of $q$ are connected, and so $q(n)$ is connected for $n$ large enough. The homotopy fiber of evaluation consists of ex-space maps together with a path in $q(n)|_x$ from the image of the constant path to the basepoint of $q(n)|_x$.

 Now consider the fibration $\Phi$ over the total space of $\mathrm{Path}(x,-)$ which has fiber over a path $\gamma: x \rightarrow x'$ equal to the space of paths lifting $\gamma$ in $q(n)$. Since the base of this fibration is contractible, the space of sections has the based homotopy type of any fiber, in particular the fiber over the constant path $\mathrm{const_X}$, and the space of nullhomotopies of a fixed section $S$ has the homotopy type of the path space $\mathrm{Path}(*,S(\mathrm{const_x}) )$.
 For $t \in [0,1]$ and a path $\gamma$, define the path $\gamma_{\leq t}(r)= \gamma(\mathrm{min}(r,t))$.
Then if $(f,\alpha)$ is a point in the homotopy fiber of our evaluation map, the ex-space map $f$ determines a section of $\Phi$ by lifting the path $\gamma$ to the path 
 $t \rightarrow f(\gamma_{\leq t})$.
The path $\alpha$ in $q(n)|_x$ then determines a nullhomotopy of sections of $\Phi$ by the above argument which demonstrates that the homotopy fiber is contractible.
\end{proof}
\begin{lem}\label{lem:canSec}
If $Z$ is a spectrum, the equivalence \[F_{X}(P,\Sigma^\infty_X X \bar \wedge Z ) \xrightarrow{\simeq} \Sigma^\infty_X X \bar \wedge Z  \] has a natural section.
\end{lem}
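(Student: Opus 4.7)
The plan is to construct the section from the canonical collapse morphism. The total space $\mathrm{Path}(X)\sqcup(X\times X)\to X\times X$ admits a tautological ex-space map to the unit ex-space $(X\times X)\sqcup(X\times X)\to X\times X$ that sends each path $\gamma$ to the non-section point $0$ in the fiber over $(\mathrm{ev}_0\gamma,\mathrm{ev}_1\gamma)$ and fixes the section. Applied levelwise, this assembles into a morphism of parametrized spectra $c:P\to \Sigma^\infty_{X\times X}(X\times X)$ in $\mathrm{Sp}_{X\times X}$. Pulling back along $\{x\}\times X\hookrightarrow X\times X$ yields collapse morphisms $c^x:P^x\to \Sigma^\infty_X X$ in $\mathrm{Sp}_X$, natural in $x$.

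Next, for any spectrum $Z$, a morphism $z:\mathbb{S}\to Z$ from the sphere spectrum induces, by external smash product, a morphism of parametrized spectra $\widetilde z: \Sigma^\infty_X X\simeq \Sigma^\infty_X X\bar\wedge \mathbb{S}\to \Sigma^\infty_X X\bar\wedge Z$ over $X$. The section $s$ is then defined on the fiber over $x$ as the map of spectra
\[Z\longrightarrow F_X(P^x,\Sigma^\infty_X X\bar\wedge Z),\qquad z\longmapsto \widetilde z\circ c^x.\]
Because $c$ is a single morphism over $X\times X$ (so the $c^x$ vary naturally in $x$) and because the assignment $z\mapsto \widetilde z$ is natural in $z$ as a morphism of spectra, these fiber-level maps assemble into a morphism $s:\Sigma^\infty_X X\bar\wedge Z\to F_X(P,\Sigma^\infty_X X\bar\wedge Z)$ of parametrized spectra over $X$, manifestly natural in $Z$.

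To check that $s$ is a section of $e$, I trace a generic $z$ through $e\circ s$ on the fiber over $x$: the evaluation $e$ applies $\widetilde z\circ c^x$ to the constant path at $x$, which is a point of $\mathrm{Path}(x,x)\subset P^x|_x$. Under $c^x$ the constant path maps to the non-basepoint of $S^0\subset (\Sigma^\infty_X X)|_x$, and under $\widetilde z$ that point maps to $z\in Z=(\Sigma^\infty_X X\bar\wedge Z)|_x$. Hence $e\circ s=\mathrm{id}_{\Sigma^\infty_X X\bar\wedge Z}$.

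I do not expect a genuine obstacle here. The only care-points are (i) distinguishing $F_X(-,-)$ as a spectrum versus a parametrized spectrum in the paper's notation, and (ii) expressing $\widetilde z$ functorially at all spectrum levels, which is handled by the standard external-smash construction. Both points are routine once one has the clean description of $s$ as precomposition with the single morphism $c$ defined over $X\times X$.
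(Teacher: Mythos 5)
Your construction is correct and is essentially the paper's own: the paper defines the section by ``constant path-lifting in a product,'' i.e.\ sending $z\in Z(n)$ to the section $\gamma\mapsto(\mathrm{ev}_1\gamma,z)$ of $X\times Z(n)$ over $\mathrm{Path}(x,-)$, which is exactly your composite $\widetilde z\circ c^x$ written levelwise, using that $P$ is a fiberwise suspension spectrum so $F_X(P,q)$ is computed by levelwise mapping spaces. Your check that evaluation at the constant path returns $z$ matches the paper's implicit verification.
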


\begin{proof}
There is a natural choice of path-lifting in a product: the constant one. With this choice the sections (expressed here fiberwise)
\[Z(n) \rightarrow \mathrm{Map}_X(X \sqcup \mathrm{Path}(x,-),X \times Z(n)) \]
are natural and commute with the structure maps of $F_{X}(P,\Sigma^\infty_X X \bar \wedge Z) $.

\end{proof}

\begin{dfn}
The Verdier dualizing parametrized spectrum, or Verdier dual, $P(X,A)$ of a pair $(X,A)$ is the parametrized spectrum over $X$ given by $\Gamma^A(P)$. 
\end{dfn}
\noindent

We are ready to state Verdier duality, however, because point-set topology interacts rather poorly with the internal smash product, we will have to derive the internal smash product, as well as the Thom complex, though this is less severe and simply requires taking a cone of the preferred section rather than quotienting. For techniques to derive the internal smash product see  \cite[Page 98]{malkbasic}.


\begin{thm}[Verdier Duality]
Suppose $A \rightarrow X$ is a cofibration such that $A$ and $X$ each have the homotopy type of a compact CW complex, then for any $q \in \mathrm{Sp}_X$ there is a zigzag,  \[\mathrm{Th}( P(X,A) \wedge q) \leftarrow  \mathrm{Th}(  P(X,A)\wedge F_{X}(P,q)) \rightarrow  \Gamma^A( q)\] 
where the rightmost map is given on the fiber over $x \in X$ by the composition
\[ F_X^A(\Sigma^\infty_X X ,\Sigma^\infty_X \mathrm{Path}(x,-)) \wedge F_X(\Sigma^\infty_X \mathrm{Path}(x,-),q) \rightarrow F_X^A(\Sigma^\infty_X X,q)= \Gamma^A(q)\]
If $q$ is level-h-fibrant, then after deriving $\wedge,\mathrm{Th}$ these are equivalences.
\end{thm}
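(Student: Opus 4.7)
\begin{prf}
The plan is to handle the two maps in the zigzag separately.

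For the leftward map, I would apply the functor $\mathrm{Th}(P(X,A)\wedge-)$ to the equivalence $F_X(P,q)\xrightarrow{\simeq}q$ from Lemma \ref{lem:free}. Since $P(X,A)$ is level-f-cofibrant by Lemma \ref{lem:cofib}, and the derived versions of $\bar\wedge$ and $\mathrm{Th}$ preserve level equivalences of sufficiently cofibrant parametrized spectra, this step immediately gives an equivalence.

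For the rightward composition map, my approach would be a functor-of-$q$ argument. Both $q\mapsto\mathrm{Th}(P(X,A)\wedge F_X(P,q))$ and $q\mapsto\Gamma^A(q)$ should be exact functors $\mathrm{Sp}_X\to\mathrm{Sp}$ which send level-cofiber sequences to cofiber sequences and commute with filtered homotopy colimits; for the former this uses the compactness of $X$ and hence of $\mathrm{Path}(X)$ to extract exactness of $F_X(P,-)$. Consequently it should suffice to verify the natural transformation on a generating family of $\mathrm{Sp}_X$. Using the compact CW structure on $X$, I would reduce to the case of constant parametrized spectra $q=\Sigma^\infty_X X\bar\wedge Z$ for $Z$ a finite spectrum. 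For such $q$, Lemma \ref{lem:canSec} produces a natural section of $F_X(P,q)\to q$, which via Proposition \ref{prp:wedge} identifies the left-hand side with $\mathrm{Th}(P(X,A))\wedge Z$. The right-hand side unwinds to the mapping spectrum $F(\Sigma^\infty X/A,Z)\simeq(\Sigma^\infty X/A)^\vee\wedge Z$, using dualizability of the compact CW object $X/A$. The theorem then reduces to the single identification $\mathrm{Th}(P(X,A))\simeq(\Sigma^\infty X/A)^\vee$ coming from the composition pairing at $q=\Sigma^\infty_X X$, a Spanier--Whitehead type duality that one checks by cell induction on $(X,A)$, with base case $(X,A)=(*,\emptyset)$ giving $S^0$ on both sides.

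The hard part will be justifying the reduction to constant coefficients rigorously: a parametrized spectrum over a non-simply-connected $X$ may carry nontrivial monodromy, and expressing it as a homotopy colimit of constants requires a careful cell-by-cell construction driven by a CW structure on $X$. If this proves awkward in the point-set category used here, an alternative plan is to induct on the cells of the pair $(X,A)$ directly, keeping $q$ general and using that each cell attachment yields compatible cofiber sequences on both functors; a tubular-neighborhood model makes the Verdier dualizing spectrum explicit over the attached cell. Either way, the structural fact to confirm in the base case is that $\mathrm{Th}(P(X,A))$ really is the Spanier--Whitehead dual of $\Sigma^\infty X/A$ with the composition pairing implementing the duality.
\end{prf}
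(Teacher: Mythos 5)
Your handling of the leftward map is fine: it is induced by the equivalence of Lemma \ref{lem:free}, and the cofibrancy supplied by Lemma \ref{lem:cofib} lets you derive $\wedge$ and $\mathrm{Th}$. The problem is the rightward map, where your reduction to constant parametrized spectra $q=\Sigma^\infty_X X\bar\wedge Z$ has a genuine gap that neither of your proposed fixes closes. Under the equivalence $\mathrm{Sp}_X\simeq\mathrm{Sp}^{BG}$ for $G\simeq\Omega X$, constant parametrized spectra correspond to trivial $G$-spectra, and these do \emph{not} generate the category under colimits: a general $q$ carries monodromy, and no cell-by-cell construction on $X$ expresses it as a homotopy colimit of constants (the attaching data is exactly the monodromy you are trying to discard). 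Your second alternative, inducting on cells of the pair $(X,A)$ with $q$ general, changes the base space at each stage, and neither $\Gamma^A_X(-)$ nor $P(X,A)$ has been given the base-change and Mayer--Vietoris compatibilities that such an induction would require.

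The correct generating family consists of the \emph{free} objects $\Sigma^\infty_X\mathrm{Path}(x,-)\,\bar\wedge\, Z$, i.e.\ restrictions of $P$ smashed with a spectrum; these correspond to $\Sigma^\infty_+G\wedge Z$ and do compactly generate $\mathrm{Sp}^{BG}$. For such $q$ both sides of the transformation are directly computable (Lemma \ref{lem:free} collapses $F_X(P,q)$ to $q$, and both sides become the fiber $P(X,A)|_x\wedge Z$), and one then inducts up to general $q$ using the compactness of $X\simeq BG$ --- which is precisely where the hypothesis that $X$ and $A$ are homotopy finite enters, and is why your claim that $\Gamma^A(-)$ commutes with filtered colimits is true but not free. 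This ``free first, then induct using compactness of $BG$'' argument is exactly Klein's, and the paper's proof proceeds by translating the statement across the equivalence $\mathrm{Sp}_X\simeq\mathrm{Sp}^{BG}$ (matching derived smash, Thom complex, relative sections, and $P(X,A)$ with their Borel counterparts) and then citing Klein's theorem; the remark following the corollary in the paper makes the same point about trivial versus free $G$-spectra. Finally, note that the identification $\mathrm{Th}(P(X,A))\simeq(\Sigma^\infty X/A)^\vee$ you want as a ``base case'' is deduced in the paper as a corollary of the theorem at $q=\Sigma^\infty_X X$, so you should not assume it as an independent input without an independent proof.
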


\begin{proof}
The construction of the right hand map occurs in Proposition \ref{prp:comp}. Let $H \leq G$ be topological groupoids, let $bG,bH$ denote the associated topological categories, and let $BG,BH$ their topological realizations. Lastly, assume $BG \leq BH$ is a cofibration. We list a few correspondences under the equivalence $\mathrm{Sp}_{BG} \simeq \mathrm{Sp}^{BG}=\mathrm{Fun}(bG,\mathrm{Sp}).$ 

\begin{enumerate}
  \item The derived internal smash product of parametrized spectra over $BG$ corresponds to the derived smash product of Borel $G$-spectra.
  \item The derived Thom complex corresponds with homotopy $G$-orbits, i.e. the homotopy colimit of the diagram of spectra.
  \item The derived sections relative to $BH$,  corresponds to \[\mathrm{hofib}((-)^{hG} \rightarrow (-)^{hH}).\]
  Here the indicated homotopy fixed points are computed as the homotopy limit of the diagram of spectra.
  \item The construction $P(X,A)$ corresponds to the Borel $G$-spectrum  \[\mathrm{hofib}(\Sigma^\infty_+ G^{hG} \rightarrow \Sigma^\infty_+ G^{hH}). \]
\end{enumerate}

The first follows from the fact that smash products are computed fiberwise. The second follows from the explicit construction of $Z_{hG}$ as $(Z \wedge \Sigma^\infty_+ EG)_G$ and observing that this is the Thom complex of the Borel construction applied to $Z$. The third is essentially dual to the second. These are worked out in \cite[Example 8.1.5]{malkbasic}. Note the derived relative sections are computed simply by taking a level-h-fibrant replacement since (i) the inclusion $BH \rightarrow BG$ is assumed to be a cofibration and (2) derived mapping spectra out of a fiberwise CW suspension spectrum like $\Sigma^\infty_{BG}BG$ are computed by taking level-h-fibrant replacements of the codomain, since it is easily checked weak equivalences between such parametrized spectra are preserved.  The fourth (proven by Klein in the case $A=\emptyset$  \cite[ Theorem 5.6]{klein_2007}) follows from (3) and Lemma  \ref{lem:free}.

Let $G$ denote the topological path groupoid of $X$ and $H$ the topological path groupoid of $A$. With these observations, under the equivalence of categories $\mathrm{Sp}_{X} \simeq \mathrm{Sp}^{BG}$, the composition map described above becomes the same as Klein's \cite[Remark 3.1]{klein_2001}\cite[Proof of Proposition 4.1]{klein_2007}.\footnote{Strictly speaking Klein works in the setting of groups rather than groupoids, but the generalization to groupoids still holds (cf. \cite[Page 3]{lurieVerd}).} The compactness assumptions on $(X,A)$ then imply that Klein's conditions for this map to be an equivalence hold.
\end{proof}

The necessity of deriving these functors will make it difficult to study the interaction of Verdier duality and parametrized operads. However, we are interested in a simple example for which we can both avoid using a zigzag and ignore derived functors.

\begin{cor}\label{cor:relativeswduality}
If $A \rightarrow X$ is a cofibration such that $A$ and $X$ each have the homotopy type of a compact CW complex, there is a canonical equivalence \[\mathrm{Th}(P(X,A)) \xrightarrow{\simeq} \Sigma^\infty (X/A)^\vee.\]
\end{cor}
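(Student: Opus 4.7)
The plan is to apply the Verdier duality theorem just proved to the special case $q = \Sigma^\infty_X X$, the monoidal unit of $(\mathrm{Sp}_X, \wedge)$. Then $P(X,A) \wedge q \cong P(X,A)$ since smashing with the unit is the identity, while from the paragraph just after the definition of $\Gamma^A$ we already have the identification $\Gamma^A(\Sigma^\infty_X X) = (\Sigma^\infty X/A)^\vee$. So Verdier duality immediately supplies the zigzag
\[\mathrm{Th}(P(X,A)) \;\leftarrow\; \mathrm{Th}\bigl(P(X,A)\wedge F_X(P,\Sigma^\infty_X X)\bigr) \;\rightarrow\; (\Sigma^\infty X/A)^\vee,\]
and one checks $\Sigma^\infty_X X$ is level-fibrant (at level $n$ its ex-space is $X \times S^n \to X$, a fibration with fiber $S^n$), so the theorem guarantees both arrows become equivalences after deriving $\wedge$ and $\mathrm{Th}$.

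To promote this to a single canonical forward-pointing equivalence rather than a zigzag, I would invoke Lemma \ref{lem:canSec} with $Z = S^0$: it provides a natural section of the equivalence $F_X(P,\Sigma^\infty_X X) \xrightarrow{\simeq} \Sigma^\infty_X X$ of Lemma \ref{lem:free}. External-smashing this section with $P(X,A)$ and applying $\mathrm{Th}$ gives a natural map
\[\mathrm{Th}(P(X,A)) \longrightarrow \mathrm{Th}\bigl(P(X,A) \wedge F_X(P,\Sigma^\infty_X X)\bigr)\]
that splits the leftward arrow of the zigzag; post-composing with the rightward arrow produces a canonical map $\mathrm{Th}(P(X,A)) \to (\Sigma^\infty X/A)^\vee$, which is therefore an equivalence.

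The technical point to verify is that one need not actually derive anything on the nose for this specific corollary, so that the displayed map really is canonical rather than only canonical up to cofibrant/fibrant replacement. The relevant facts are that $P(X,A) = \Gamma^A(P)$ is level-f-cofibrant by Lemma \ref{lem:cofib}, that $\Sigma^\infty_X X$ and $F_X(P,\Sigma^\infty_X X)$ are level-fibrant, and that external smash with a level-f-cofibrant parametrized spectrum together with $\mathrm{Th}$ (via Proposition \ref{prp:wedge} and the cited preservation of equivalences for level-cofibrant parametrized spectra) preserves the relevant weak equivalences.

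The main obstacle I anticipate is bookkeeping with cofibrancy and fibrancy hypotheses: although the conceptual argument is short, care is needed to ensure that the strict point-set formula for the composite map agrees with the derived Verdier duality equivalence, and that the compactness hypotheses on $(X,A)$ feed through Klein's criterion for the duality map to be an equivalence in the Borel $\Omega X$-spectrum model used in the proof of Verdier duality.
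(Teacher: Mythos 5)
Your proposal is correct and follows essentially the same route as the paper: take $q=\Sigma^\infty_X X$ in the Verdier duality theorem, use the natural section from Lemma \ref{lem:canSec} (with $Z=S^0$) to split the leftward arrow of the zigzag, and observe via Lemma \ref{lem:cofib} that no deriving of internal smash products or Thom complexes is actually needed here, so the resulting composite is a genuine point-set map agreeing with the derived equivalence in the homotopy category. The paper's proof is exactly this argument, stated more tersely.
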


\begin{proof}
Let  $q=\Sigma^\infty_X X$. If we compose the section provided by Lemma \ref{lem:canSec} with the righthand map of underived Verdier duality, we get a map which on fibers is
\[ F_X^A(\Sigma^\infty_X X,\Sigma^\infty_X \mathrm{Path}(x,-)) \wedge S^0 = F_X^A(\Sigma^\infty_X X,\Sigma^\infty_X \mathrm{Path}(x,-)) \rightarrow F_X^A(\Sigma^\infty_X X,\Sigma^\infty_X X ) \]
Since we have avoided taking internal smash products and only took Thom complexes of a level-h-cofibrant spectrum by Lemma \ref{lem:cofib}, in the homotopy category this map agrees with what we would obtain by taking derived Verdier duality and inverting the first map. Hence, it is an equivalence.

\end{proof}

The equivalence $\mathrm{Th}(P(X,A))\xrightarrow{\simeq} (\Sigma^\infty X/A)^\vee$ has a very simple description. On the fiber over $x\in X$, it is determined by the composition \[F_X^A(\Sigma^\infty_X X,\Sigma^\infty_X \mathrm{Path}(x,-)) \wedge S^0 \rightarrow F_X^A(\Sigma^\infty_X X,\Sigma^\infty_X X )\]
where $S^0$ is picking out the map of parametrized spectra that Lemma \ref{lem:canSec} produces.
This map is simply the fiberwise suspension spectrum of the fiberwise collapse map in $\mathrm{Map}_X(X \sqcup \mathrm{Path}(x,-), X \sqcup X)$. So in this case, Verdier duality is given by composition with fiberwise collapse.

\begin{remark}
    Although the parametrized spectrum $\Sigma^\infty_X X$ has the simplest implementation of Verdier duality, one cannot disregard the other cases. In the Borel category a similar description of Verdier duality holds for trivial $G$-spectra, but it is only an equivalence when $BG$ is compact. This is because one first demonstrates that Verdier duality is an equivalence for free $G$-spectra, then proceeds by an induction which requires the compactness of $BG$. \cite[Section 3]{klein_2001}.
\end{remark}
The Verdier dualizing parametrized spectrum $P(X,A)$ captures a great deal of information about Poincaré duality. Let $k$ be a field. 
\begin{prp}\label{prp:Poincaréduality}
If $A \rightarrow X$ is a cofibration of spaces with the homotopy type of compact CW complexes such that $P(X,A)$ has fiber equivalent to $S^{-n}$ and there is a global choice of orientation of the fibers $H_{-n}(P(X,A)|_x;k)$, then $H_n(X,A;k)$ has a fundamental class $[\alpha]$ such that
\[H^*(X,A;k) \xrightarrow{\cap \alpha} H_{n-*}(X;k)\]
is an isomorphism.
\end{prp}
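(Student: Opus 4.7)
The strategy is to deduce Poincaré--Lefschetz duality for $(X,A)$ from the Verdier duality equivalence $\mathrm{Th}(P(X,A)) \simeq (\Sigma^\infty X/A)^\vee$ by trivializing $P(X,A)$ after smashing with $Hk$. Each fiber of $P(X,A) \wedge Hk$ is equivalent to $\Sigma^{-n}Hk$, which is concentrated in a single degree; as a parametrized $Hk$-module spectrum it is therefore determined up to equivalence by the rank-one local system $x \mapsto H_{-n}(P(X,A)|_x; k)$ of orientations. A global choice of orientation is precisely a trivialization of this local system, equivalently an equivalence of parametrized $Hk$-module spectra
\[ \Sigma^{-n}_X X \wedge Hk \xrightarrow{\simeq} P(X,A) \wedge Hk. \]

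Applying $\mathrm{Th}(-)$ to this equivalence, using Proposition \ref{prp:wedge} to commute Thom with smashing against a spectrum, and invoking the preceding corollary yields
\[ \Sigma^{-n}(Hk \wedge X_+) \;\xrightarrow{\simeq}\; \mathrm{Th}(P(X,A)) \wedge Hk \;\xrightarrow{\simeq}\; (\Sigma^\infty X/A)^\vee \wedge Hk. \]
Taking homotopy groups already establishes the abstract isomorphism $H_{n-*}(X;k) \cong H^*(X,A;k)$. To produce the fundamental class, observe that because $X/A$ is a finite CW complex, $Hk \wedge X/A$ is a dualizable $Hk$-module, so $Hk$-linearly dualizing the display above gives $F(X_+,Hk) \simeq \Sigma^{-n}(Hk \wedge X/A)$. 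Define $[\alpha] \in H_n(X,A;k) = \pi_0(\Sigma^{-n}(Hk \wedge X/A))$ as the image of the unit $1 \in H^0(X;k) = \pi_0 F(X_+,Hk)$ under this equivalence.

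It remains to identify the abstract duality isomorphism with cap product against $[\alpha]$. The Verdier duality map from the corollary is defined fiberwise as composition with the collapse map, and is therefore natural in the base space. Consequently it intertwines the Thom diagonal of $P(X,A)$, induced by $\Delta\colon X \to X \times X$, with the diagonal-pullback map on $(\Sigma^\infty X/A)^\vee$ that underlies the classical cap product pairing between $H^*(X,A;k)$ and $H_n(X,A;k)$. Since an equivalence of free rank-one modules over $F(X_+,Hk)$ is determined by the image of the unit, the duality isomorphism must coincide with cap product against $[\alpha]$. The main technical obstacle is precisely this diagonal compatibility: tracing the Thom diagonal of $P(X,A)$ through the explicit Verdier duality formula described after the corollary to confirm it realizes the standard diagonal that defines cap product.
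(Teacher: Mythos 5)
Your argument is correct and is essentially the paper's proof: the paper deduces the statement from Verdier duality (the corollary identifying $\mathrm{Th}(P(X,A))$ with $(\Sigma^\infty X/A)^\vee$) together with the Thom isomorphism theorem for oriented spherical parametrized spectra, citing May--Sigurdsson for the latter. Your trivialization of $P(X,A)\wedge Hk$ via the orientation local system, and the ensuing cap-product identification that you flag as the remaining technical point, are precisely the content of that cited Thom isomorphism theorem, so you have unpacked rather than replaced the paper's argument.
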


\begin{proof}
This follows from Verdier duality and the Thom isomorphism theorem for parametrized spectra with spherical fibers\cite[Theorem 20.5.8]{may_sigurdsson_2006}.
\end{proof}

The Verdier dual $P(X,A)$ has a canonical description which leads one to believe it might be functorial. We need a precise understanding of its functoriality in order to study the interaction with operads.

\begin{dfn}
The symmetric monoidal category $(\mathrm{Top}_{\subset},\times)$ has objects given by cofibrations of spaces with the homotopy type of compact CW complexes. The morphisms \[(A \rightarrow X) \rightarrow (B \rightarrow Y)\] are maps $f:X \rightarrow Y$ satisfying:

\begin{enumerate}
\item The map $f$ is a quotient map onto its image.
    \item The restriction $f|_{X - A}$ is an open embedding.
  \item $f(\mathrm{closure}(X-A)) = \mathrm{closure}(f(X-A))$.
  \item $f^{-1}(B) \subset A$.
\end{enumerate}
The pushout product $\times$ on $\mathrm{Top}_\subset$ is given by
\[(X,A) \times (Y,B):= (X \times Y, (X \times B) \cup (A \times Y)).\]
\end{dfn}



A morphism $f:(X,A) \rightarrow (Y,B)$ in $\mathrm{Top}_\subset$ decomposes $Y$ into two closed subsets: $\mathrm{closure}(f(X-A)))$ and $Y -f(X-A)$. This first subset is the closure of the embedded copy of $X-A$ which is also naturally a quotient of $X$. Using this observation, we can extend relative Verdier duality to a functor \[P:\mathrm{Top}_\subset \rightarrow \mathrm{ParSp}.\] Associated to a morphism $(X,A) \xrightarrow{f} (Y,B)$, the map $P(X,A)_x \rightarrow P(Y,B)_{f(x)}$ is given by sending a section $s$ relative to $A$ of $\Sigma^\infty_+\mathrm{Path}(x,-)$ to the section relative to $B$ of $\Sigma^\infty_+ \mathrm{Path}(f(x),-)$ given by $s$ on the embedded image of $X-A$ and factoring through the zero section on $Y - f(X-A) \supset B$. The conditions (2)--(4) allows this section to be well-defined and relative to $B$ as a set-theoretic function, while (1)+(3) imply it is actually continuous.

The conditions on $\mathrm{Top}_\subset$ also allow us to have a well-defined collapse map associated to $f$:
\[Y/B \rightarrow X/A\]
\[y \mapsto f^{-1}(y).\]

Via the collapse map, the assignment $(X,A)\mapsto (\Sigma^\infty X/A) ^\vee$ can be made covariantly functorial on $\mathrm{Top}_\subset$. By Corollary \ref{cor:relativeswduality}, we understand the relation of these two functors.

\begin{prp}\label{prp:functor}
Verdier duality gives a natural equivalence of functors \[(X,A) \rightarrow \mathrm{Th} ( P(X,A)) \] \[\Downarrow\] \[(X,A) \rightarrow  (\Sigma^\infty X/A)^\vee.\]
\end{prp}

Given an $x \in X$, we denote the path component of $x$ by $X_x$.

\begin{cor}\label{cor:spherical}
Let $f:(X,A) \rightarrow (Y,B)$ be a morphism in $\mathrm{Top}_\subset$ with $Y$ path-connected. Suppose the fibers of $P(X,A)$ and $P(Y,B)$ are spherical of dimension $-n$ and oriented with respect to the field $k$. Then $f$ induces an isomorphism
\[H_{-n}(P(X,A)|_x;k)
\xrightarrow{\cong} H_{-n}(P(Y,B)|_{f(x)};k),\]
if and only if it induces an isomorphism
\[\bar{H}_n(Y/B;k) \xrightarrow{\cong} \bar{H}_n(X_x/(A \cap X_x);k)\]
\end{cor}

\begin{proof}
Choose orientations of the fibers. By Proposition \ref{prp:Poincaréduality}, this fixes isomorphisms \[H^{-n}(\mathrm{Th}(P(X_x,A \cap X_x));k)\cong H^{-n}(\mathrm{Th}(P(Y,B));k) \cong \bar{H}_n(X_x/(A \cap X_x);k)\cong\bar{H}_n(Y/B;k) \cong k.\] 
Under these identifications it is straightforward to check that degree is preserved up to a sign.
\end{proof}

Finally, we observe that there is a nice interaction between the symmetric monoidal products of $(\mathrm{ParSp},\bar\wedge)$ and $(\mathrm{Top}_\subset,\times)$.

\begin{dfn}\label{dfn:lax}
For $(X,A),(Y,B) \in (\mathrm{Top}_\subset,\times)$, there is as natural map
\[P(X,A) \bar\wedge P(Y,B) \rightarrow P((X,A)\times (Y,B)).\]

Fiberwise the map is given by

%

\begin{equation*} \label{eq1}
\begin{split}
P(X,A) \bar\wedge P(Y,B)|_{(x,y)} & \cong \Gamma^A(\Sigma^\infty_X (\mathrm{Path}(x,-) ) \wedge \Gamma^B(\Sigma^\infty_Y (\mathrm{Path}(y,-) ) \\
 & \rightarrow \Gamma^{(A \times Y) \cup (X \times B)}(\Sigma^\infty_X (\mathrm{Path}(x,-)) \bar \wedge \Sigma^\infty_Y (\mathrm{Path}(y,-) )\\
 & \cong \Gamma^{(A \times Y) \cup (X \times B)}(\Sigma^\infty_{X \times Y}(\mathrm{Path}((x,y),-)))
\end{split}
\end{equation*}

\end{dfn}

\section{Koszul-Verdier duality of operads and right modules} \label{sec:koszul}
There are two different constructions of Koszul duality for operads in spectra. One is due to Ching and Salvatore, which we will describe in this section and refer to by $K(-)$, and the other is $\mathrm{bar}(-)^\vee$ which is due to the general theory of bar-cobar duality of associative algebras due to Lurie.\footnote{In the case of the $E_n$ operad this has not, to our knowledge, been related to Lurie's theory of bar-cobar duality for $E_n$-algebras or the bar-cobar duality of Ayala-Francis.} Brantner has given a rigorous comparison of these two constructions on the level of operads in $\mathrm{Ho}(\mathrm{Sp})$ \cite[Proposition 5.4.19]{brantner}, but we are not familiar with any comparison as operads in spectra. Currently, the Koszul self duality of $E_n$ is only known in regards to the point-set construction $K(-)$.

We will recall some facts about various W-constructions and bar constructions which can be found (with slight variation) in \cite{malin2}. The origin of these constructions can be found throughout \cite{boardman_vogt_1973,ching_2005, salvatore_1998} with explicit methods to topologize the weighted/labeled trees. We are interested in both pointed and unpointed versions of these constructions, and so to save space we will recover unpointed definitions from their pointed counterparts. The goal of this section is to give a construction of the functor $K(-)$ in terms of generalized Thom complexes and investigate its consequences. All operads are assumed to be reduced.

As before, an $I$-labelled tree is a tree with:
\begin{itemize}
    \item  a distinguished root,
    \item a bijection between the leaves and $I$,
    \item and satisfying the condition that if $v$ is an internal vertex, i.e. not a leaf or a root, the \textit{outgoing} edges $e(v)$, i.e. edges which are between $v$ and some leaf, should have cardinality at least $2$.
    \end{itemize}

\begin{dfn}[The $W$-construction of an operad]
Given an operad O in $(\mathrm{Top}_*,\wedge)$, let $T(O)(I)$ denote the space
of $I$-labeled, rooted trees with the property that the root has a single outgoing edge and an internal vertex $v$ is labeled by $O(e(v))$, and nonroot and nonleaf adjacent edges are
labeled by an element of $[0, \infty]$; any tree containing a vertex labeled by the basepoint is collapsed to a single point. We let $W(O)(I)$ denotes the quotient of $T(O)(I)$ by the relation that any length 0 edge can be collapsed
by applying operadic partial composition. The collection of $W(O)(I)$ is an operad 
$W(O)$ by grafting trees via length $\infty$ edges.
\end{dfn}

\begin{dfn}[The $W_{[0,\infty]}$-construction of an operad]
The thickened $W$-construction $W_{[0,\infty]}(O)$ is given by
\[W_{[0,\infty]}(O)(I):=W(O)(I) \wedge [0,\infty]_+.\] In terms of trees, the $[0,\infty]$ coordinate is the length of the root edge. This is an operad by grafting trees via the labeled root edge.
\end{dfn}

We will pay particular attention to the geometry of the $W_{[0,\infty]}$-construction; ultimately it is the key to defining the cooperad structure on the Koszul dual cooperad $B(-)$, and it will also be the key to encoding Koszul duality into parametrized spectra.

\begin{dfn}[The $W$-construction of a right module]
Given a right module $R$ in $(\mathrm{Top}_*,\wedge)$ over the operad $O$, let $T(R)(I)$ denote the space
of rooted trees such that every internal vertex has at least 2 children with labels as follows: the root $r$ is labeled by $R(e(r))$,  the internal vertices $v$ are labeled by $O(e(v))$, while nonleaf adjacent edges are
labeled by an element of $[0, \infty]$; we identify any tree containing a vertex labeled by a basepoint to a single point. We let $W(R)(I)$ denotes the quotient of $T(R)(I)$ by the relation that any length 0 edge can be collapsed
by applying right module or operad partial composition.
The collection of all $W(R)(I)$ is called $W(R)$. It is a right module over $W(O)$ by grafting via length $\infty$ edges, and it is a right module over $W_{[0,\infty]}(O)$ by grafting via the $[0,\infty]$ coordinate.
\end{dfn}

If $O$ is an operad in $(\mathrm{Top},\times)$ and $R$ is a right module over it, $W(O)$ and $W(R)$, or any of their variants, are defined by $W(O)(I):=W(O_+) - *$, $W(R)(I):=W(R_+)(I)-*$. In either the pointed or unpointed case, there are equivalences of operads and compatible equivalences of right modules induced by composition:
\[W(O) \rightarrow O,\]
\[W(R)\rightarrow R.\]
Similarly for the other versions of the $W$-construction.

There are many similarities in the definition of the operad $\mathcal{F}_n$ and the $W$-construction of an operad. There is a quite remarkable fact that $W(\mathcal{F}_n) \cong \mathcal{F}_n$ as operads \cite{salvatore_2021}. One could point to this as the primary reason to expect a Koszul self duality result for the operad $E_n \simeq \mathcal{F}_n$. We observed in \cite[Section 7]{malin2} that a minor variation of the proof $W(\mathcal{F}_n) \cong \mathcal{F}_n$ shows that $W(\mathcal{F}_M)\cong \mathcal{F}_M$ as right modules. In particular, these $W$-constructions turn out to be manifolds. This observation will end up simplifying arguments in Lemma \ref{lem:codimension0}.

\begin{dfn}[The boundary and interiors of the $W$-construction]
Let $O$ be an operad in $(\mathrm{Top},\times)$ and $\partial W(O)$ denote the subsymmetric sequence of $W(O)$ of trees with a length $\infty$ edge. We let $\mathring{W}(O):= W(O) -\partial W(O)$.
We define  \[\mathring{W}_{(0,\infty)}(O):=\mathring{W}(O) \times (0,\infty).\] It is an operad via grafting trees by the $(0,\infty)$ coordinate. 
Let \[\partial W_{[0,\infty]}(O):= (\partial W(O) \times [0,\infty]) \cup (W(O) \times \{0\} )\cup (W(O) \times \{\infty\}),\] i.e. the trees in $W_{[0,\infty]}(O)$ which have an edge of length $\infty$ or the root edge has length $0$.

\end{dfn}
\begin{dfn}
For an operad $O$ and right module $R$ in $(\mathrm{Top},\times)$, let  $\partial W(R)$ denote the subsymmetric sequence of $W(R)$ where any edge is length $\infty$. Let $\mathring{W}(R)=W(R)-\partial W (R)$. This is a right module over  $\mathring{W}_{(0,\infty)}(O)$ by grafting via the root edge.
\end{dfn}




\begin{dfn}
A right module pair $(R,A)$ over an operad $O$ in $(\mathrm{Top},\times)$ is a right module $R$ with a chosen right submodule $A$ for which $A(I) \rightarrow R(I)$ is a cofibration for all finite sets $I$. The quotient $R/A$ is the right $O_+$-module in $(\mathrm{Top}_*,\wedge)$ given by $R(I)/A(I)$.
\end{dfn}

At this point, we observe that we may extend the construction of the right module $\mathcal{F}_M$ to compact framed manifolds with boundary. Fix a framed embedding $M \rightarrow N$ such that $N$ has no boundary, and let $\mathcal{F}_M$ be the right submodule of $\mathcal{F}_N$ of infinitesimal configurations contained in $M$. Based on the description of $\mathcal{F}_N$ in terms of infinitesimal configurations, it is clear that the isomorphism type of $\mathcal{F}_M$ depends only on the framed diffeomorphism type of $M$.

\begin{dfn}
If $M$ is a framed manifold with boundary, then 
$\widehat{\partial}\mathcal{F}_M$ is the right submodule of $\mathcal{F}_M$ of configurations which have a point in $\partial M$.
\end{dfn}
We use the notation $\widehat\partial$ rather than $\partial$ because $\mathcal{F}_M(I)$ is a manifold and $\partial \mathcal{F}_M(I)$ is larger than $\widehat\partial \mathcal{F}_M(I)$, in particular it also contains any configurations which have an infinitesimal component.

\begin{dfn}If $(R,A)$ is a right module pair, then $DW(R,A)$ is $W(A) \cup\partial W(R)  $.
\end{dfn}

\begin{dfn}
    An operad $O$ in $(\mathrm{Top},\times)$ is closed if $O$ has the levelwise nonequivariant homotopy type of a compact CW complex, and the image of every partial composite is closed. Similarly, a right module pair $(R,A)$ over $O$ is closed if $R$ and $A$ have the levelwise nonequivariant homotopy types of compact CW complexes, and the image of any of the restricted partial composites \[\mathrm{closure}(R(I)-A(I)) \times O(J) \rightarrow R(I \cup_a J)\] is closed.
\end{dfn}

The most relevant example of a closed operad is $\mathcal{F}_n$ which has 
 a closed right module pair $(\mathcal{F}_M,\hat\partial\mathcal{F}_M)$. Note that every operad which satisfies the required homotopy finiteness conditions is equivalent to a closed operad since $W(O)$ is easily seen to be closed. Similarly, if the right module pair $(R,A)$ satisfies the homotopy finiteness conditions, then $(W(R),W(A))$ is closed.

\begin{lem}\label{lem:operadinpair}
For a closed operad $O$ the pair $(W_{[0,\infty]}(O),\partial W_{[0,\infty]}(O))$ forms an operad 
 in $(\mathrm{Top}_\subset,\times)$. For a closed right module pair $(R,A)$ over $O$ the pair $(W(R),DW(R,A))$ forms a right module over $(W_{[0,\infty]}(O),\partial W_{[0,\infty]}(O))$.

\end{lem}

\begin{proof}
First, note that the quotient map condition is satisfied by the definition of the $W_{[0,\infty]}$-constructions. Now observe that that for the composite associated to $I \cup_a J$
 \[\partial W(O)_{[0,\infty]}(I \cup_a J) \cap \mathrm{image}(W(O)_{[0,\infty]}(I) \times W(O)_{[0,\infty]}(J))\] 
consists of trees which are the grafting of an $I$-labeled tree and $J$-labeled tree, which are necessarily unique, and either some edge has length $\infty$ or the root edge has length $0$. In other words the intersection is contained in the image of \[(W(O)_{[0,\infty]}(I) \times \partial W_{[0,\infty]}(O)(J)) \cup (\partial W_{[0,\infty]}(O)(I) \times W_{[0,\infty]}(O)(J)).\]
 
\noindent This implies the boundary condition for morphisms in $\mathrm{Top}_\subset$ is satisfied by the partial composites. Similarly, \[DW(R,A)(I \cup_a J) \cap \mathrm{image}(W(R)(I) \times W(O)_{[0,\infty]}(J))\] consists of the points which are the grafting of an $I$-labeled tree and a $J$-labeled tree, which are necessarily unique, and some edge has length $\infty$ or the root is labeled by $A$. In other words, the intersection is contained in the image of \[(DW(R,A)(I) \times W_{[0,\infty]}(O)(J)) \cup (W(R)(I) \times \partial W_{[0,\infty]}(O)(J))\] which show the right module composites also satisfy the boundary conditions. The second condition follows from the fact the partial composites of $\mathring{W}_{(0,\infty)}(O)$ and $\mathring{W}(R)$ are open inclusions \cite[Proposition 4.8, Proposition 6.8]{malin2}, and the third condition on closures follows from the behavior of the thickened $W$-construction with respect to length $0$ edges together with our requirement that $O$ and $(R,A)$ are closed.
\end{proof}

Let us now define the Koszul dual cooperad. This version appears in \cite[Section 6]{ching_salvatore} where it is observed that it coincides with the version originally defined in \cite{ching_2005}. It bears the name ``bar construction'' because Ching observed in his thesis that it is isomorphic to $B(1,O,1)$, a bar construction with respect to the $\circ$-product.

\begin{dfn}[The bar construction $B$]
Given an operad $O$ in $(\mathrm{Top}_*,\wedge)$, let $B(O)$ denote $W(O)\wedge
(0, \infty)^+$, where the $(0, \infty)^+$ coordinate is interpreted as a labeling of the edge adjacent to the root, modulo the relations that any tree with an $\infty$ length edge is identified with
the basepoint.

This is a cooperad via the decomposition which to an $I \cup_a J$ labeled tree $T$ returns, if possible, the unique $I$-labeled weighted tree and $J$-labeled weighted tree which graft along $a$ to obtain $T$. If this is not possible, we send it to $*$.

\end{dfn}

\begin{dfn}[The Koszul dual $K(-)$]
If $O$ is an operad in $(\mathrm{Top}_*,\wedge)$, the Koszul dual operad in $(\mathrm{Sp},\wedge)$ is \[K(O):= (\Sigma^\infty B(O))^\vee.\]
\end{dfn}

As a matter of taste, we will often write $K(\Sigma^\infty O)$ rather than $K(O)$ or $K(\Sigma^\infty_+ O)$ if $O$ is unpointed. This is justified by the fact that the bar construction naturally extends to reduced operads in spectra, and there is an isomorphism $B(\Sigma^\infty O)\cong \Sigma^\infty B(O)$. By Definition \ref{dfn:lax}, Lemma \ref{lem:operadinpair}, and Proposition \ref{prp:functor}, we may endow the sequence of Spivak normal fibrations of the pair $(O,\mathrm{indecom}^h(O))$ with the structure of an operad in parametrized spectra:

\begin{dfn}
For a closed operad $O$ in $(\mathrm{Top},\times)$ the Koszul dualizing fibration $\xi_O$ is the operad in $(\mathrm{ParSp},\bar \wedge)$
\[\xi_O(I)=P(W_{[0,\infty]}(O)(I),\partial W_{[0,\infty]}(O)(I)).\]
\end{dfn}

For an operad $O$ in parametrized spectra, we defined the Thom complex operad $\mathrm{Th}(O)$ in $(\mathrm{Sp},\wedge)$ (Definition \ref{dfn:thomOp}).

\begin{prp}[Koszul-Verdier duality] \label{prp:koszulverdieroperad}
For a  closed operad $O$ in $(\mathrm{Top},\times)$, there is an operad equivalence
\[ \mathrm{Th} (\xi_O) \xrightarrow{\simeq}K(\Sigma_+^\infty O). \]
\end{prp}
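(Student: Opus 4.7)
The plan is to produce the underlying equivalence of symmetric sequences levelwise from Verdier duality, then promote it to an operad equivalence by checking that the natural transformation of Proposition \ref{prp:functor} is monoidal. First, I would identify the pointed quotient $W_{[0,\infty]}(O)(I)/\partial W_{[0,\infty]}(O)(I)$ with $B(O_+)(I)$ by unwinding definitions: the root-edge coordinate matches the $(0,\infty)^+$ factor of $B$, while the $\partial$-locus (root edge of length $0$ or $\infty$, or some internal edge of length $\infty$) is exactly what becomes the basepoint in the bar construction. Feeding this identification into the corollary following Verdier duality in section \ref{sec:verdier} immediately yields a $\Sigma_I$-equivariant equivalence of spectra
\[\mathrm{Th}(\xi_O(I)) = \mathrm{Th}(P(W_{[0,\infty]}(O)(I),\partial W_{[0,\infty]}(O)(I))) \xrightarrow{\simeq} (\Sigma^\infty B(O_+)(I))^\vee = K(\Sigma^\infty_+ O)(I),\]
so the underlying symmetric sequences agree.

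Next I would exhibit both sides as arising from lax monoidal functors applied to the operad $(W_{[0,\infty]}(O),\partial W_{[0,\infty]}(O))$ in $(\mathrm{Top}_\subset,\times)$ produced by Lemma \ref{lem:operadinpair}. On the left, the functor $(X,A)\mapsto \mathrm{Th}(P(X,A))$ is lax monoidal by Lemma \ref{lem:lax} and Proposition \ref{prp:wedge}, and applied to this pair operad it yields the operad $\mathrm{Th}(\xi_O)$. On the right, the contravariant functor $(X,A)\mapsto (\Sigma^\infty X/A)^\vee$ is lax monoidal via the Spanier--Whitehead Künneth map (strong on the finite CW pairs of $\mathrm{Top}_\subset$), and applied to the same operad it recovers $K(\Sigma^\infty_+ O)$, since the cooperad induced by the collapse maps of pair products is precisely the tree-decomposition cooperad on $B(O_+)$ under the identification of the first paragraph.

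The main obstacle is to verify that the natural equivalence $\mathrm{Th}(P(X,A))\xrightarrow{\simeq}(\Sigma^\infty X/A)^\vee$ of Proposition \ref{prp:functor} is a monoidal natural transformation. Concretely, one must check commutativity, for every pair $(X,A),(Y,B)$ in $\mathrm{Top}_\subset$, of the square whose top arrow is the image under $\mathrm{Th}$ of the lax map of Lemma \ref{lem:lax} and whose bottom arrow is the Künneth map $(\Sigma^\infty X/A)^\vee\wedge(\Sigma^\infty Y/B)^\vee\to (\Sigma^\infty(X/A\wedge Y/B))^\vee$. I would use the explicit fiberwise description in the corollary's proof — the equivalence is implemented by composition with the fiberwise collapse $\mathrm{Path}(x,-)\to X\to X/A$ — so that both routes around the square can be identified with ``pair the relative sections, then compose with the product collapse,'' reducing commutativity to a diagram chase at the level of ex-space maps.

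Granted this monoidal compatibility, Proposition \ref{prp:functor} lifts to an equivalence of lax monoidal functors $\mathrm{Top}_\subset\to\mathrm{Sp}$, and evaluating on the pair operad $(W_{[0,\infty]}(O),\partial W_{[0,\infty]}(O))$ delivers the desired operad equivalence $\mathrm{Th}(\xi_O)\xrightarrow{\simeq}K(\Sigma^\infty_+ O)$.
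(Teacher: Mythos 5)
Your proposal is correct and follows essentially the same route as the paper: identify $B(O_+)(I)$ with $W_{[0,\infty]}(O)(I)/\partial W_{[0,\infty]}(O)(I)$, view both sides as lax monoidal functors on the pair operad in $(\mathrm{Top}_\subset,\times)$ from Lemma \ref{lem:operadinpair}, and check monoidality of the natural transformation of Proposition \ref{prp:functor} via the observation that the comparison is fiberwise composition with the collapse $\Sigma^\infty_X\mathrm{Path}(x,-)\to\Sigma^\infty_X X$, which commutes with smashing sections. The paper's proof is exactly this final diagram check.
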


\begin{proof}
Observe that $\Sigma^\infty B(O_+)^\vee$ can be computed by applying the functor
\[(X,A) \mapsto X/A^\vee\]
to $(W_{[0,\infty]}(O),\partial W_{[0,\infty]}(O))$. By applying Proposition \ref{prp:functor}, we just need to check the following diagram commutes:
\begin{center}
\begin{tikzcd}
{\mathrm{Th}(P((X,A)\times(Y,B)))} \arrow[r]                          & (\Sigma^\infty (X/A \wedge Y/B))^\vee               \\
{\mathrm{Th}(P(X,A))  \wedge \mathrm{Th}(P(Y,B))} \arrow[r] \arrow[u] & (\Sigma^\infty X/A) ^\vee \wedge (\Sigma^\infty Y/B)^\vee \arrow[u]
\end{tikzcd}
\end{center}
The left hand  map is defined by taking smash products of the fibers of $P(X,A)$ and $P(Y,B)$, which consist of relative sections of $\Sigma^\infty_X \mathrm{Path}(x,-)$ and $\Sigma^\infty_Y\mathrm{Path}(y,-) $, while the right hand map is defined by taking smash products of maps into the sphere spectrum $S^0$. On each fiber, passing from the left side to the right side is given by composition with the fiberwise collapse
\[\Sigma^\infty_X \mathrm{Path}(x,-) \rightarrow \Sigma^\infty_X X,\] and this commutes with taking smash products of sections.

\end{proof}

 In \cite{malin2}, we studied a homological precursor to topological Koszul self duality. Suppose $O$ is an operad in $(\mathrm{Top},\times)$ for which all spaces have the homotopy type of finite CW-complexes. Recall the definition of a Poincaré duality pair \cite{spivak_1967}.

\begin{dfn}
A pair $(X,A)$ is an $n$-dimensional Poincaré duality pair if there is an $\alpha \in H_n(X,A)$ so that 
\[H^*(X)\xrightarrow{\cap \alpha} H_{n-*}(X,A)\]
\[H^*(X,A)\xrightarrow{\cap \alpha} H_{n-*}(X)\]
\[H^*(A) \xrightarrow{\cap \partial \alpha} H_{n-*+1}(A)\]
are all isomorphisms. Here $\partial$ denotes the connecting homomorphism in homology for the pair $(X,A)$.
\end{dfn}
.
 \begin{dfn}
A distinguished $n$-class of an operad $O$ is a homology class $\alpha_I$, for each nonempty finite set $I$, in the relative homology \[H_{n|I|-n}(W_{[0,\infty]}(O)(I),\partial W_{[0,\infty]}(O)(I))\cong \bar{H}_{n|I|-n}(B(O)(I))\] such that under the partial decomposites 
\[\alpha_{I \cup_a J} \rightarrow \alpha_I \otimes \alpha_J.\]

\end{dfn}
\noindent As a sanity check, note that \[(n|I|-n) +( n|J|-n)=n(|I|+|J|-1)-n=n|I \cup_a J|-n.\]

\begin{dfn}
An operad $O$ with a distinguished $n$-class $\alpha$ is Poincaré-Koszul of dimension $n$ if $\alpha_I$ makes \[(W_{[0,\infty]}(O)(I),\partial W_{[0,\infty]}(O)(I))\] into a Poincaré duality pair for all $I$. We call $\alpha$ as the fundamental class.
\end{dfn}

 For the rest of our discussion on operads we fix a field $k$ and all (co)homology and tensor products are taken with respect to $k$. There is a natural notion of suspension for operads in the category $(\mathrm{dgVect}_k,\otimes)$. For a chain complex $A$, let $A[i]$ denotes the graded vector space where the gradings have been shifted up $i$.

\begin{dfn}
The algebraic $n$-sphere operad $S_n$ in $(\mathrm{dgVect}_k,\otimes)$ is defined by \[S_n(I):=k[n|I|-n]\] with partial composites determined by the canonical isomorphism $k \otimes k \cong k$. The algebraic $n$-sphere operad is equivalently defined as the coendomorphism operad \[\mathrm{CoEnd}(k[n]):= \mathrm{Map}(k[n],k[n]^{\otimes (-)}).\]
\end{dfn}

\noindent We define the $n$th suspension of an operad $O$ in $(\mathrm{dgVect},\otimes)$ by \[s_n O(I):= S_n(I) \otimes O(I).\] This naturally forms an operad. By unfolding definitions, one immediately obtains \cite[Theorem 5.5]{malin2}:

\begin{thm}
If $O$ is a Poincaré-Koszul operad of dimension $n$ with fundamental class $\alpha$, then there is an isomorphism of operads

\[H_*(O) \cong s_n H_*(K(\Sigma^\infty_+ O)).\]
induced by \[H^*(W_{[0,\infty]}(O)) \xrightarrow{\cap \alpha} s_{-n}\bar{H}_*(B(O)),\] where we consider $H^i(-)$ as living in degree $-i$. This isomorphism is natural with respect to maps of Poincaré-Koszul operads of dimension $n$ that preserve the fundamental $n$-class

\end{thm}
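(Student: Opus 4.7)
The strategy is to upgrade the Poincaré-Lefschetz cap product isomorphism to a morphism of cooperads and then dualize. Since $B(O) \cong W_{[0,\infty]}(O)/\partial W_{[0,\infty]}(O)$ by inspection of the definitions, we may identify $\bar{H}_*(B(O)(I)) = H_*(W_{[0,\infty]}(O)(I), \partial W_{[0,\infty]}(O)(I))$, and since $W_{[0,\infty]}(O) \to O$ is a weak equivalence we have $H^*(W_{[0,\infty]}(O)) = H^*(O)$. With these identifications the displayed map is the ordinary relative cap product with $\alpha_I$, and the Poincaré-Koszul hypothesis says precisely that this is a degree-preserving isomorphism once we place $H^i$ in degree $-i$ and apply the $s_{-n}$-shift on the right.

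To promote this to a morphism of cooperads, one must check that $\cap \alpha$ intertwines the cooperad structure on $H^*(W_{[0,\infty]}(O))$ coming from the partial composites $\mu_a$ of $W_{[0,\infty]}(O)$ with the cooperad structure on $\bar{H}_*(B(O))$ coming from the partial decomposites $\Delta_a$. Lemma \ref{lem:operadinpair} ensures that $\mu_a$ is an inclusion of a codimension-$0$ subpair in $(W_{[0,\infty]}(O)(I \cup_a J), \partial)$, and the definition of $\Delta_a$ identifies it as the Pontryagin--Thom collapse onto the image of $\mu_a$. The projection formula for cap product then reduces the required commutativity to the identity $\Delta_{a*}(\alpha_{I \cup_a J}) = \alpha_I \otimes \alpha_J$, which is exactly the condition that $\alpha$ is a distinguished $n$-class.

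Having obtained a cooperad isomorphism $H^*(O) \xrightarrow{\cong} s_{-n}\bar{H}_*(B(O))$, I would finish by dualizing. Under the assumption of finite CW type, universal coefficients combined with $H_i(K(\Sigma^\infty_+ O)(I)) = \bar{H}^{-i}(B(O)(I))$ transform the cooperad isomorphism on cohomology into the claimed operad isomorphism $H_*(O) \cong s_n H_*(K(\Sigma^\infty_+ O))$, with the $s_{-n}$-shift on the cohomology side rearranging into the $s_n$-shift on the homology side. Naturality with respect to maps of Poincaré-Koszul operads preserving $\alpha$ is immediate from naturality of cap product. The main obstacle in carrying this out is making the projection-formula argument watertight in the relative setting: one must verify that the Pontryagin--Thom description of $\Delta_a$ is faithful at the chain level relative to $\partial$, so that the projection formula applies as stated on the pair. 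Everything else is formal bookkeeping.
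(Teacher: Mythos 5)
Your proposal is correct and is essentially the argument the paper intends: the paper's ``proof'' is just a citation to [Malin, Theorem 5.5] with the remark that it follows by unfolding definitions, and that unfolding is exactly your chain -- identify $\bar{H}_*(B(O))$ with the relative homology of $(W_{[0,\infty]}(O),\partial W_{[0,\infty]}(O))$, use that the partial composites are codimension-$0$ inclusions of pairs in $\mathrm{Top}_\subset$ (Lemma on the operad in pairs) so the projection formula for the collapse maps plus the defining condition $\Delta_{a*}(\alpha_{I\cup_a J})=\alpha_I\otimes\alpha_J$ makes $\cap\,\alpha$ a cooperad map, then dualize over the field using levelwise finiteness. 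The one caveat you flag (chain-level validity of the projection formula for the relative collapse) is genuinely the only point needing care, and it holds precisely because of the boundary conditions built into morphisms of $\mathrm{Top}_\subset$.
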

 
 Classically, there is a hierarchy of self duality:
 \begin{enumerate}
     \item Twisted Poincaré complex or equivalently the Spivak normal fibration is spherical,
     \item $\mathbb{Z}$-Poincaré complex or equivalently the Spivak normal fibration is also oriented 
     \item $\Sigma^\infty_+ X \simeq \Sigma^n \Sigma^\infty_+ X^\vee$ or equivalently the Spivak normal fibration is trivial \cite[Corollary 3.4]{wall_1967}.
 \end{enumerate}

\begin{prp}\label{prp:spherical}
A closed operad $O$ is Poincaré-Koszul with respect to all fields $k$, if and only if the fibers of $\xi_O$ are all spherical, each spherical fibration is $k$-orientable, and there are fixed orientations of the fibers such that the partial composites induce degree $1$ maps of the fibers.
\end{prp}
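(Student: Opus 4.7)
The plan is to reduce the claim to a level-by-level application of Proposition~\ref{prp:Poincaréduality} and Corollary~\ref{cor:spherical} to the pairs $(X_I, A_I) := (W_{[0,\infty]}(O)(I),\, \partial W_{[0,\infty]}(O)(I))$, and then to match the cooperad compatibility $\alpha_{I\cup_a J}\mapsto \alpha_I\otimes \alpha_J$ with the degree-one condition on the fiber composites of $\xi_O$. Dimension counts force the spherical fibers of $\xi_O(I)$ to have dimension $n - n|I|$: this matches the algebraic degree $n|I|-n$ of $\alpha_I$ via the Thom isomorphism, and the composite dimension adds correctly as $(n-n|I|)+(n-n|J|) = n-n|I\cup_a J|$.

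For each fixed $I$, I invoke Proposition~\ref{prp:Poincaréduality} together with its converse --- the classical fact (Spivak, Wall, Klein) that for a compact CW pair the existence of a fundamental class implementing Poincaré duality is equivalent to the Verdier dual $P(X_I, A_I)$ having $k$-oriented spherical fibers of the expected dimension. Under this equivalence, the choice of orientation of the fiber over $x$ corresponds, via the Thom isomorphism $\bar{H}_{n|I|-n}(X_I/A_I; k)\cong H_{n-n|I|}(\xi_O(I)|_x;k)$, to the class $\alpha_I$ itself. This yields the level-by-level part of the biconditional in both directions, using only the assumption that the levels of $O$ have compact CW homotopy type.

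For the compatibility, the partial composite of $\xi_O$ is, by Lemma~\ref{lem:lax} and Proposition~\ref{prp:functor}, the Verdier dual of the partial composite of $W_{[0,\infty]}(O)$ regarded as a morphism in $\mathrm{Top}_\subset$ (Lemma~\ref{lem:operadinpair}). Its fiberwise effect is a self-map of an $(n-n|I\cup_a J|)$-sphere, and by Proposition~\ref{prp:koszulverdieroperad} the induced map on reduced Thom-complex homology is precisely the $k$-linear dual of the cooperad decomposite $B(O)(I\cup_a J)\to B(O)(I)\wedge B(O)(J)$. Corollary~\ref{cor:spherical} then says that this fiber composite has degree one with respect to the chosen orientations if and only if the decomposite sends $\alpha_{I\cup_a J}$ to $\alpha_I\otimes \alpha_J$. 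The main subtlety is that Corollary~\ref{cor:spherical} as stated only registers isomorphisms rather than degrees; the substantive step is therefore a careful accounting of how the chosen fiber orientations correspond to the generators $\alpha_I$ of $\bar{H}_{n|I|-n}(X_I/A_I;k)$ under the Thom isomorphism, so that ``isomorphism'' genuinely upgrades to ``degree one''. This is routine bookkeeping, but it is the one place where one cannot simply cite earlier results verbatim.
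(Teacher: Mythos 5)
Your proposal is correct and follows essentially the same route as the paper, which derives this proposition directly from Proposition~\ref{prp:Poincaréduality} (applied levelwise to the pairs $(W_{[0,\infty]}(O)(I),\partial W_{[0,\infty]}(O)(I))$) together with the functoriality and monoidality of $P(-,-)$ established in Lemma~\ref{lem:lax}, Proposition~\ref{prp:functor}, and Corollary~\ref{cor:spherical}; the paper treats the statement as immediate and records no further argument. Your extra care about the converse direction (fundamental class $\Rightarrow$ spherical oriented Spivak--Verdier fibration) and about matching chosen fiber orientations with the generators $\alpha_I$ so that ``isomorphism'' upgrades to ``degree one'' is exactly the bookkeeping the paper leaves implicit.
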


\begin{proof}
    The forwards direction follows from the well developed theory of Spivak normal fibrations \cite{klein_2007,klein_2001,wall_1967}, in particular the Spivak normal fibration of a Poincaré duality pair is spherical and respects pushout products. The backwards direction follows from Proposition \ref{prp:Poincaréduality}.
\end{proof}

In \cite[Theorem 5.10]{malin2}, we proved that the $E_n$ operad is Poincaré-Koszul, hence $\xi_{E_n}$ consists of spherical fibrations with orientations compatible with the operad partial composites. In analogy with the classical story, the Koszul self duality of $E_n$ should be equivalent to a structured trivialization of $\xi_O$. To formulate this, we must introduce $n$-sphere operads.

\begin{dfn} 
An $n$-sphere operad $S_n$ in $(\mathrm{Sp},\wedge)$ is an operad weakly equivalent to the coendomorphism operad given by $\mathrm{CoEnd}(S^n)$
\[\mathrm{CoEnd}(S^n)(I):=F(S^n,(S^n)^{\wedge I}).\]
\end{dfn}
 If $f:S^n \rightarrow (S^n)^{\wedge I},g:S^n \rightarrow (S^n)^{\wedge J}$, we may form the infiniteseimal composite
\begin{center}
\begin{tikzcd}
S^n \arrow[d, "f"]                                                                                                     \\
S^n \wedge \dots \wedge S^n \wedge \dots \wedge S^n \arrow[d, "1 \wedge \dots \wedge g \wedge \dots \wedge 1"] \\
S^n \wedge \dots \wedge S^n \wedge \dots \wedge S^n \wedge \dots \wedge S^n                                           
\end{tikzcd}
 \end{center}

This construction gives rise to the partial composites of the coendomorphism operad. The only aspect of $S_n$ we need is that $H_*(S_n)$ is the algebraic $n$-sphere operad. We will abuse notation by defining operadic suspension without an implicit model of $S_n$ in mind. 

\begin{dfn}\label{dfn:opSus}
For an operad $P$ in $(\mathrm{Sp},\wedge)$, $s_n P$ is $ S_n \wedge P.$

\end{dfn}

\begin{dfn}
An operad $O$ in $(\mathrm{Top},\times)$ is Koszul self dual of dimension $n$ if there is a zigzag equivalence from $s_n K(\Sigma^\infty_+ O)$ to $\Sigma^\infty_+ O$. We call such a zigzag equivalence a Koszul zigzag.
\end{dfn}

Of course, because there are model category structures on $\mathrm{Operad}(\mathrm{Sp},\wedge)$ with weak equivalences given by the levelwise weak equivalences, the existence of a Koszul zigzag is equivalent to $\Sigma^\infty_+ O \cong s_n K(\Sigma^\infty_+ O)$ in the homotopy category of operads \cite{kro_2007}. 

Recall that the fiberwise smash product of a parametrized spectrum $p$ with a spectrum $E$ is defined as $p \bar\wedge E$, where we consider $E$ as a parametrized spectrum over $*$. We can similarly define fiberwise suspension of operads in parametrized spectra:

\begin{dfn}
    The $n$-fold fiberwise suspension $P \bar\wedge S_n$ of an operad $P$ in $(\mathrm{ParSp},\bar\wedge)$ is 
    \[(P \bar\wedge S_n)(I)=P(I) \bar\wedge S_n(I)\]
\end{dfn}
For convenience, given a spectrum $E$ we will abbreviate $\Sigma^\infty_X X \bar\wedge E$ by $X \bar\wedge E$. Recall that because all of our parametrized operads $O$ lie in $\mathrm{ResOp}$ (Definition \ref{dfn:resOp}), the zero sections of $O$ form an operad in $(\mathrm{Top},\times)$ called $\mathrm{Base}(O)$ which is preserved by parametrized operad maps. We say $O$ covers $\mathrm{Base}(O)$.

\begin{dfn}
For an operad $P$ in parametrized spectra, covering an operad $O$ in $(\mathrm{Top},\times)$ a zigzag equivalence from $P$ to $O \bar\wedge S_n$ is called a (spherical) $n$-trivialization.
\end{dfn}

\begin{thm}
A closed operad $O$ in $(\mathrm{Top},\times)$ is Koszul self dual of dimension $n$, if and only if there is a $(-n)$-trivialization of $\xi_O$.
\end{thm}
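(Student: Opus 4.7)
For $(\Leftarrow)$, the plan is to apply $\mathrm{Th}(-\bar\wedge S_n)$ arity-wise to the given trivialization zigzag $\xi_O \simeq \cdots \simeq W_{[0,\infty]}(O) \bar\wedge S_{-n}$. By Proposition \ref{prp:wedge}, $\mathrm{Th}$ converts the external smash product of parametrized spectra to the smash product of spectra, so combined with Proposition \ref{prp:koszulverdieroperad} the left endpoint becomes $\mathrm{Th}(\xi_O) \wedge S_n \simeq s_n K(\Sigma^\infty_+ O)$. The right endpoint becomes $\Sigma^\infty_+ W_{[0,\infty]}(O) \wedge (S_{-n} \wedge S_n) \simeq \Sigma^\infty_+ W_{[0,\infty]}(O) \simeq \Sigma^\infty_+ O$, using that $S_{-n} \wedge S_n$ is equivalent as an operad to the sphere, and that the canonical map $W_{[0,\infty]}(O) \to O$ obtained by iterating operadic composition is an operad equivalence in $(\mathrm{Top}, \times)$.

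For $(\Rightarrow)$, the plan is to lift a given Koszul zigzag $s_n K(\Sigma^\infty_+ O) \simeq \cdots \simeq \Sigma^\infty_+ O$ to a zigzag of operads in parametrized spectra over the fixed base $W_{[0,\infty]}(O)$. Koszul self duality implies in particular that $O$ is Poincar\'e-Koszul of dimension $n$, so by Proposition \ref{prp:spherical} the fibers of $\xi_O$ are spherical of dimension $-n$ with compatible orientations. Under the correspondence $\mathrm{Sp}_X \simeq \mathrm{Sp}^{B\Omega X}$ for connected $X$ recalled in Section \ref{sec:parsp}, parametrized operads with spherical fibers of a fixed dimension over $X = W_{[0,\infty]}(O)$ are determined up to equivalence by their Thom complexes together with the spherical structure data. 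Applying this correspondence arity-wise to each term of the Koszul zigzag produces the required lift $\xi_O \bar\wedge S_n \simeq \cdots \simeq W_{[0,\infty]}(O) \bar\wedge S_{-n} \bar\wedge S_n$; desuspending by $\bar\wedge S_{-n}$ then yields the desired $(-n)$-trivialization.

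The main obstacle is this converse direction: showing that $\mathrm{Th}(- \bar\wedge S_n)$ is essentially surjective and conservative on the relevant homotopy category of operads in parametrized spectra with spherical fibers of fixed dimension over $W_{[0,\infty]}(O)$, and that the resulting lift respects operadic partial composites. The compatibility with partial composites is ensured by the fact that the composites of $\xi_O$ are themselves built from morphisms in $\mathrm{Top}_\subset$ via Lemma \ref{lem:operadinpair} and Proposition \ref{prp:functor}, so the operad structure is carried along canonically by the Borel-spectra correspondence. The essential surjectivity is really the content of Verdier duality applied to the Spivak normal fibration of each arity, generalizing Wall's classical identification of stable trivializations with spectral self duality.
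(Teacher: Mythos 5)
Your backward direction is fine and is essentially the paper's argument: apply $\mathrm{Th}$ to the trivialization, use Proposition \ref{prp:wedge} and Koszul--Verdier duality (Proposition \ref{prp:koszulverdieroperad}) to identify the two ends as $K(\Sigma^\infty_+ O)$ and $s_{-n}\Sigma^\infty_+ O$.

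The forward direction has a genuine gap. Your strategy rests on the claim that parametrized spectra (and operads therein) with spherical fibers of fixed dimension over $W_{[0,\infty]}(O)$ are ``determined up to equivalence by their Thom complexes,'' so that the Koszul zigzag in $(\mathrm{Sp},\wedge)$ can be lifted term-by-term to a zigzag over the base. This is false: the Thom spectrum functor $c_!$ is neither conservative nor essentially surjective on spherical fibrations, which is exactly why the Spivak normal fibration carries more information than its Thom spectrum (distinct stable spherical fibrations over a space can have equivalent Thom spectra, and the intermediate terms of an arbitrary Koszul zigzag are not a priori Thom spectra of anything over $W_{[0,\infty]}(O)$ at all). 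Verdier duality does not supply this either --- it identifies $\mathrm{Th}(P(X,A))$ with $(\Sigma^\infty X/A)^\vee$, but says nothing about $\mathrm{Th}$ detecting or creating equivalences upstairs. The move that actually works, and that the paper uses, is the adjunction $c_!\dashv c^*$: a map $\mathrm{Th}(p)\to H$ into an \emph{ordinary} spectrum $H$ corresponds to a map of parametrized spectra $p\to X\bar\wedge H$ into the \emph{trivial} (pulled-back) spectrum, covering the identity of the base. Applying this to $\mathrm{Th}(\xi_O)\xrightarrow{\simeq}K(\Sigma^\infty_+O)$ gives a single map of parametrized operads $\xi_O\to W_{[0,\infty]}(O)\bar\wedge K(\Sigma^\infty_+O)$; one then tensors the (base-trivial) target with the zigzag $K(\Sigma^\infty_+O)\rightsquigarrow S_{-n}$ coming from self duality and the map $O\to\mathrm{com}$, and checks the composite onto a fiber is a homotopy retraction, so the resulting zigzag is a fiberwise equivalence. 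No lifting of the intermediate terms of the Koszul zigzag to parametrized spectra is needed, and the operad structure on the target is automatic because everything is of the form $W_{[0,\infty]}(O)\bar\wedge(-)$. This is Wall's argument that a trivialization of the Spivak fibration is equivalent to spectral self duality; your appeal to a Borel-theoretic inverse of $\mathrm{Th}$ is not a repair of it but a claim strictly stronger than the theorem being proved.
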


\begin{proof}
The backwards direction is obvious given Koszul-Verdier duality. The forward direction is not difficult, but somewhat surprising. First, note that there is a zigzag map of operads, from $\operatorname{Th}(\xi_O)$ to $ S_{-n}$ which comes from the Koszul self duality of $O$ and the map $O\rightarrow \mathrm{com}$, and this latter map, of course, induces isomorphism on $H_0$ when restricted to the individual path components of each $O(I)$. We conclude that there is a zigzag map $\mathrm{Th}(\xi_O) \rightarrow S_{-n}$ which, when restricted to the wedge summands corresponding to different path components of the base of $\xi_O$, induces isomorphisms on bottom homology. At this point, note that if this was a direct map instead of a zigzag, we could use the identification of maps out of Thom complexes to directly construct a map of operads $\xi_O \rightarrow W_{[0,\infty]}(O) \bar\wedge S_{-n}$. We could then argue that this is a fiberwise equivalence as follows:

By construction, this map is a fiberwise equivalence, if and only if the composite $\xi_O \rightarrow W_{[0,\infty]}(O) \bar\wedge S_{-n} \rightarrow S_{-n}$ is a fiberwise equivalence. It suffices to show this for each path component of $W_{[0,\infty]}(O)(I)$. Since the fibers of $\xi_O$ are spherical for a Koszul self dual operad by Proposition \ref{prp:spherical}, it then suffices to show these are fiberwise homology equivalences. This, in turn, would be implied by the map $\xi_O(I) \rightarrow  S_{-n}(I)$ yielding an isomorphism on the bottom homology of the Thom complexes when restricted to the wedge summands corresponding to the various path components. This is because by \cite[Lemma 3.1]{wall_1967} the bottom homology of these wedge summands is generated by the inclusion of a fiber. However, we already saw that after restricting to wedge summands, this map induces an isomorphism on bottom homology, and so we would be done.

In fact, the same idea works for the zigzag. Start with the map $\mathrm{Th}(\xi_O) \xrightarrow{\simeq} K(\Sigma^\infty_+ O)$; using the identification of maps out of a Thom complex, we have a map \[\xi_O \rightarrow W_{[0,\infty]}(O) \bar\wedge K(\Sigma^\infty_+ O).\] Note the base of either operad is $W_{[0,\infty]}(O)$ and this is, of course, not an equivalence as the fibers of one are spherical and the fibers of the other are Koszul duals of $O$. Applying $W_{[0,\infty]}(O) \bar\wedge -$ to the zigzag map from $K(\Sigma^\infty_+ O)$ to $S_{-n}$ results in a zigzag equivalence of operads starting at $\xi_O$ and ending at $W_{[0,\infty]}(O) \bar\wedge S_{-n}$ because if we invert all the backwards equivalences (in the homotopy category of parametrized spectra) the composition
\[\xi_O \rightarrow W_{[0,\infty]}(O) \bar\wedge S_{-n}\]
is an equivalence by the previous argument.

\end{proof}

In this proof, we see the distinction of zigzag equivalences and equivalences of zigzags come into play. Even if we started with a Koszul zigzag for which every map was a weak equivalence, the trivialization we produce only has the weaker property that after inverting all backwards maps and composing it becomes an equivalence. 

 In fact, we actually proved something stronger.

\begin{thm}\label{thm:strongKoszul}
For a closed operad $O$ in $(\mathrm{Top},\times)$ the following are equivalent:

\begin{enumerate}
  \item $O$ is Koszul self dual of dimension $n$.
  \item There is a $(-n)$-trivialization of $\xi_O$.
  \item $O$ is Poincaré-Koszul, and there is a zigzag map of operads from $K(\Sigma^\infty_+ O) $ to $ S_{-n}$ such that the map 
  \[H_{n-n|I|}(K(\Sigma^\infty_+ O)(I)) \rightarrow H_{n-n|I|}(S_{-n}(I))\]
  restricts to an isomorphism on the wedge summands of $K(\Sigma^\infty_+O)(I)$ corresponding to the different path components of $O(I)$.
\end{enumerate}

\end{thm}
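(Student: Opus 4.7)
My plan is to leverage the already-established equivalence $(1) \Leftrightarrow (2)$ and close the cycle by proving $(1) \Rightarrow (3)$ and $(3) \Rightarrow (2)$. Both arguments rely on the dictionary between zigzag maps of operads in spectra and zigzag maps of operads in parametrized spectra supplied by Koszul-Verdier duality (Proposition \ref{prp:koszulverdieroperad}) together with the Thom complex adjunction $\mathrm{Th} = c_!$ used in the proof of $(1) \Rightarrow (2)$.

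For $(1) \Rightarrow (3)$: Given a Koszul zigzag $\Sigma^\infty_+ O \simeq s_n K(\Sigma^\infty_+ O)$, applying operadic $(-n)$-suspension yields $K(\Sigma^\infty_+ O) \simeq s_{-n} \Sigma^\infty_+ O$, and postcomposing with the $(-n)$-suspended augmentation $s_{-n} \Sigma^\infty_+ O \to s_{-n} \Sigma^\infty_+ \mathrm{com} = S_{-n}$ produces the desired zigzag of operads in spectra. The Poincaré-Koszul hypothesis follows from $(1) \Rightarrow (2)$, which supplies a $(-n)$-trivialization of $\xi_O$ and hence spherical, compatibly oriented fibers, so that Proposition \ref{prp:spherical} applies. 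The homological condition is verified by tracing the suspension: under the identifications above, $H_{n-n|I|}(K(\Sigma^\infty_+ O)(I)) \cong H_0(\Sigma^\infty_+ O(I))$, and the augmentation sends the generator to that of $H_{n-n|I|}(S_{-n}(I))$.

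For $(3) \Rightarrow (2)$: I repeat the strategy of the proof of $(1) \Rightarrow (2)$, now taking the zigzag of $(3)$ as the input instead of deriving it from $(1)$. Koszul-Verdier duality provides an operad equivalence $\mathrm{Th}(\xi_O) \xrightarrow{\simeq} K(\Sigma^\infty_+ O)$, which by Thom complex adjunction corresponds to a parametrized operad map $\xi_O \to W_{[0,\infty]}(O) \bar\wedge K(\Sigma^\infty_+ O)$. Applying $W_{[0,\infty]}(O) \bar\wedge (-)$ to the hypothesized zigzag and splicing produces a zigzag of parametrized operads $\xi_O \to \cdots \to W_{[0,\infty]}(O) \bar\wedge S_{-n}$. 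The main obstacle is verifying this is a weak equivalence fiberwise; here the Poincaré-Koszul assumption together with Proposition \ref{prp:spherical} ensures that $\xi_O(I)|_x$ is a sphere of dimension $n - n|I|$, matching the dimension of $S_{-n}(I)$. Tracing through the Thom complex adjunction, the fiber map factors as the inclusion $\xi_O(I)|_x \hookrightarrow \mathrm{Th}(\xi_O(I)) \simeq K(\Sigma^\infty_+ O)(I)$ followed by the given zigzag to $S_{-n}(I)$. The first map induces an isomorphism on $H_{n-n|I|}$ by the Thom isomorphism for oriented spherical fibrations, and the second does so by hypothesis, so the composition is a degree $\pm 1$ map of $(n - n|I|)$-spheres, hence a weak equivalence. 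This yields the desired $(-n)$-trivialization.
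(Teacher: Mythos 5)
Your proposal is correct and follows essentially the same route as the paper: it reuses the $(1)\Leftrightarrow(2)$ equivalence, obtains $(1)\Rightarrow(3)$ from the suspended augmentation together with sphericality of $\xi_O$ and Proposition \ref{prp:spherical}, and proves $(3)\Rightarrow(2)$ by rerunning the Thom-adjunction trivialization with the hypothesized zigzag, checking the fiberwise equivalence on bottom homology exactly as the paper does via the fiber inclusion generating $H_{n-n|I|}$ of the Thom complex.
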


\begin{proof}
We have already proven the equivalence of $(1),(2)$, and the implication $(1) \implies (3)$ was also shown above. Now we show $(3) \implies (2)$. Assume we are given such a zigzag map of operads from $K(\Sigma^\infty_+ O)$ to $S_{-n}$, we can construct a zigzag map from $\xi_O$ to $W_{[0,\infty]}(O) \bar\wedge S_{-n}$ in the same way as in the above proof. Poincaré-Koszulness detects that $\xi_O$ is spherical by Proposition \ref{prp:spherical}, hence the bottom homology of a wedge summand of the Thom complex corresponding to a path component is generated by the inclusion of a fiber. Our assumption then implies that the map is a fiberwise equivalence.
\end{proof}

\begin{remark}
Knudsen has constructed the ``universal enveloping algebra'' functor \[\mathrm{Alg}_{\mathrm{bar}(\mathrm{com})^\vee}(\mathrm{Sp},\wedge) \rightarrow \mathrm{Alg}_{E_n}(\mathrm{Sp},\wedge)\] where $\mathrm{bar}$ denotes Lurie's bar construction from operads to cooperads \cite{knudsen_2018}. It seems likely this arises from induction along a map $s_n \mathrm{bar}(\mathrm{com})^\vee \rightarrow \Sigma^\infty_+ E_n$. If this were the case, applying $\mathrm{bar}(-)^\vee$ again, would construct a map $\mathrm{bar}(\Sigma^\infty_+ E_n)^\vee \rightarrow S_{-n}$. If there is a similar Thom complex description for $\mathrm{bar}(-)^\vee$, the same argument as above would give an equivalence of operads $\Sigma^\infty_+ E_n \simeq s_n \mathrm{bar}(\Sigma^\infty_+ E_n)^\vee$.
\end{remark}

 We now continue with theory of Koszul duality for right modules, all proofs are almost identical to the operad case, so we do not include them.

\begin{dfn}
If $R$ is a right module over $O$ in $(\mathrm{Top}_*,\wedge)$, let $B(R)$ denote $W(R)$  modulo the relations that any tree with a length $\infty$ edge is identified with
$*$.
This is a right comodule via decomposing trees, if possible, and otherwise sending collapsing to the basepoint.
If we wish to specify the operad we are taking bar construction with respect to, we write it as $B(R,O,1)$.

\end{dfn}

\begin{dfn}
If $R$ is a right module over the operad $O$ in $(\mathrm{Top}_*,\wedge)$, the Koszul dual right module is \[K(R):= (\Sigma^\infty (B(R)))^\vee.\]
\end{dfn}

Again, as a matter of taste, we will write $B(\Sigma^\infty R)^\vee$ or $B(\Sigma^\infty_+ R)$ if $R$ is unpointed, knowing that this abuse of notation is justified by the extension of Koszul duality to operads and right modules in spectra. Recall that all of our operads and right modules are levelwise nonequivariantly homotopy finite.

\begin{dfn}
For a closed right module pair $(R,A)$ over a closed operad $O$ in $(\mathrm{Top},\times)$ the Koszul dualizing fibration $\xi_{(R,A)}$ is the right module over $\xi_O$ in $(\mathrm{ParSp},\bar\wedge)$ 
\[\xi_{(R,A)}(I):=P(W(R),DW(R,A)).\]
\end{dfn}

\begin{prp}[Koszul-Verdier duality for right modules]\label{prp:koszulverdiermodule}
For a closed right module pair $(R,A)$ over a closed operad $O$ in $(\mathrm{Top},\times)$, there is a Koszul-Verdier duality equivalence of right modules compatible with the Koszul-Verdier duality equivalence of $O$,
\[\operatorname{Th}(\xi_{(R,A)}) \xrightarrow{\simeq}  K(\Sigma^\infty R/A). \]
\end{prp}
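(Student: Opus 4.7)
The plan is to mimic the proof of the operad version (Proposition \ref{prp:koszulverdieroperad}) almost verbatim, with the pair $(W_{[0,\infty]}(O),\partial W_{[0,\infty]}(O))$ replaced on one side by $(W(R),DW(R,A))$. The first step is to recognize that
\[K(\Sigma^\infty R/A)(I) \;=\; B(R/A)(I)^\vee \;=\; \bigl(\Sigma^\infty W(R)(I)/DW(R,A)(I)\bigr)^\vee,\]
since by unwinding the definition of $B(-)$ the bar construction $B(R/A)$ collapses precisely those trees with a length $\infty$ edge or a root vertex labelled by a basepoint of $R/A$, i.e.\ by an element of $A$. Hence $K(\Sigma^\infty R/A)$ is obtained levelwise by applying the functor $(X,A)\mapsto (\Sigma^\infty X/A)^\vee$ to the pair $(W(R)(I),DW(R,A)(I))$, which lies in $\mathrm{Top}_\subset$ by Lemma \ref{lem:operadinpair}.

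Second, Proposition \ref{prp:functor} already supplies a natural equivalence of functors $\mathrm{Top}_\subset \to \mathrm{Sp}$ between $(X,A)\mapsto \mathrm{Th}(P(X,A))$ and $(X,A)\mapsto (\Sigma^\infty X/A)^\vee$. Applied levelwise this gives the desired equivalence of symmetric sequences $\mathrm{Th}(\xi_{(R,A)}) \xrightarrow{\simeq} K(\Sigma^\infty R/A)$; the task is to upgrade it to an equivalence of modules over the equivalence of operads $\mathrm{Th}(\xi_O) \xrightarrow{\simeq} K(\Sigma^\infty_+ O)$ constructed in Proposition \ref{prp:koszulverdieroperad}. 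Concretely, I would check that the following square commutes, where $(X,A) = (W(R)(I),DW(R,A)(I))$ and $(Y,B) = (W_{[0,\infty]}(O)(J),\partial W_{[0,\infty]}(O)(J))$:
\begin{center}
\begin{tikzcd}
\mathrm{Th}(P((X,A)\times(Y,B))) \arrow[r] & (\Sigma^\infty(X/A \wedge Y/B))^\vee \\
\mathrm{Th}(P(X,A)) \wedge \mathrm{Th}(P(Y,B)) \arrow[r] \arrow[u] & (\Sigma^\infty X/A)^\vee \wedge (\Sigma^\infty Y/B)^\vee \arrow[u]
\end{tikzcd}
\end{center}
The verification is identical to the operad case: on each fiber both compositions amount to smashing sections of path fibrations $\Sigma^\infty_X\mathrm{Path}(x,-)$ and then postcomposing with the fiberwise collapse $\Sigma^\infty_X \mathrm{Path}(x,-)\to \Sigma^\infty_X X$ to land in $S^0$, and these two operations tautologically commute.

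I expect no genuine obstacle; the main thing to be careful about is bookkeeping the asymmetry between the module factor and the operad factor. In particular, one must confirm that the module partial composite $W(R)\times W_{[0,\infty]}(O) \to W(R)$, given by grafting along the $[0,\infty]$-coordinate of the operad factor, satisfies the boundary condition $f^{-1}(\mathrm{image}(f)\cap DW(R,A)) \subset DW(R,A)\times W_{[0,\infty]}(O) \cup W(R)\times \partial W_{[0,\infty]}(O)$ required by morphisms in $\mathrm{Top}_\subset$. This is exactly what is established in Lemma \ref{lem:operadinpair}. Compatibility with the operad-level equivalence of Proposition \ref{prp:koszulverdieroperad} is then automatic from naturality of Proposition \ref{prp:functor}.
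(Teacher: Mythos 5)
Your proof is correct and is exactly the argument the paper intends: the paper omits the proof with the remark that the module statements are ``almost identical to the operad case,'' and your write-up is precisely that transcription, with the pair $(W(R),DW(R,A))$ in place of $(W_{[0,\infty]}(O),\partial W_{[0,\infty]}(O))$, the identification $B(R/A)\cong W(R)/DW(R,A)$, and Lemma \ref{lem:operadinpair} supplying the $\mathrm{Top}_\subset$ boundary conditions for the module composites.
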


As in the operad case, we can describe a class of right module pairs which has an automatic relative homological Koszul self duality map. We recall a slight generalization of the definitions and results from \cite{malin2}. We fix an operad $O$ in $(\mathrm{Top},\times)$ and right module pair $(R,A)$ all of which are levelwise nonequivariantly homotopy finite. For the rest of our discussion on right modules we fix a field $k$ and all (co)homology and tensor products are taken with respect to $k$.

\begin{dfn}
A distinguished $(n,d)$-class of a right module pair $(R,A)$ over an operad $O$ with a distinguished $n$-class $\alpha$ is a choice for each nonempty finite set $I$ of an element $\beta_I$ in \[H_{n|I|-n+d}(W(R)(I),DW(R,A)(I))\cong \bar{H}_{n|I|-n+d}(B(R/A)(I)),\] such that under the partial decomposites
\[\beta_{I \cup_a J} \rightarrow \beta_I \otimes \alpha_J.\]
\end{dfn}
\noindent As a sanity check, note \[(n|I|-n+d) + (n|J|-n)=n(|I|+|J|-1)-n+d=n|I \cup_a J|-n+d.\]

\begin{dfn}
A right module pair $(R,A)$ with a distinguished class $\beta$ over a Poincaré-Koszul operad $O$ is Poincaré-Koszul of dimension $(n,d)$ if $\beta_I$ makes \[(W(R)(I),DW(R,A))\] into a Poincaré duality pair for all $I$.
We call $\beta$ the fundamental class.
\end{dfn}

\begin{dfn}
The algebraic $(n,d)$-sphere right module $S_{(n,d)}$ over the algebraic $n$-sphere operad $S_n$ is defined by \[S_{(n,d)}(I):=k[n|I|-n+d]\] with partial composites determined by the canonical isomorphism $k \otimes k \cong k$. It is equivalently defined as the levelwise $d$-fold suspension $ S_n[d]$.
\end{dfn}
 We define the $(n,d)$-suspension of a right $P$-module $Q$ in $(\mathrm{dgVect}_k,\otimes)$ by \[s_{(n,d)} Q(I):= S_{(n,d)}(I) \otimes Q(I).\] This naturally forms a right module over $s_n P$. As such, the suspension of right modules is a combination of two natural notions of suspension: one which is internal to right $O$-modules which we write as $\Sigma^d$, and a more interesting suspension which transforms right $O$-modules to right $s_n O$-modules. By unraveling definitions one immediately obtains \cite[Theorem 7.4]{malin2}.

\begin{thm}
If $(R,A)$ is a Poincaré-Koszul right module pair of dimension $(n,d)$ with fundamental class $\beta$ there is an isomorphism of right modules

\[H_*(R) \cong s_{(n,d)} \bar{H}_*(K(\Sigma^\infty R/A)).\]
induced by $H^*(R) \xrightarrow{\cap \beta} s_{(-n,-d)}H_*(B(R/A))$, where we use the convention that $H^i(-)$ lives in degree $-i$. This is compatible with the Poincaré-Koszul duality isomorphism of $O$. This isomorphism is natural with respect to maps of Poincaré-Koszul right module pairs that preserve the fundamental $(n,d)$-class.

\end{thm}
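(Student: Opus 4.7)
The plan is to unwind the Koszul--Verdier module duality of Proposition \ref{prp:koszulverdiermodule} on homology, using the setup already established for the operad analogue. Since $(R,A)$ is Poincaré--Koszul of dimension $(n,d)$, the fibers of $\xi_{(R,A)}(I)$ over $W(R)(I)$ are spherical of dimension $-(n|I|-n+d)$ and $k$-oriented, compatibly with the partial composites (by the module analogue of Proposition \ref{prp:spherical}). This is what we will package homologically.

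First I would establish the isomorphism as symmetric sequences. For each nonempty finite $I$, applying relative Poincaré duality to the pair $(W(R)(I), DW(R,A)(I))$ with fundamental class $\beta_I$ yields
\[H^*(W(R)(I)) \xrightarrow{\cap \beta_I} \bar H_{(n|I|-n+d)-*}(B(R/A)(I)).\]
Since the equivalence $W(R)\to R$ gives $H^*(W(R)(I))\cong H^*(R(I))$ and since $K(\Sigma^\infty R/A)(I) = (\Sigma^\infty B(R/A)(I))^\vee$, working over the field $k$ we have $\bar H_j(K(\Sigma^\infty R/A)(I)) \cong \bar H^{-j}(B(R/A)(I))$. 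Applying $\mathrm{Hom}_k(-,k)$ to the cap-product isomorphism therefore produces
\[H_*(R(I)) \;\cong\; s_{(n,d)} \bar H_*(K(\Sigma^\infty R/A)(I)),\]
where the shift $n|I|-n+d$ is exactly what is encoded by $s_{(n,d)}$ (recalling our convention that $H^i$ lives in degree $-i$).

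Second, I would upgrade this to an isomorphism of modules. The $H_*(O)$-module structure on $H_*(R)$ is induced by the partial composites $W(R)(I)\times W_{[0,\infty]}(O)(J)\to W(R)(I\cup_a J)$, while the comodule structure on $\bar H_*(K(\Sigma^\infty R/A))$ is dual to the partial decomposites of $B(R/A)$. By Lemma \ref{lem:operadinpair} these composites are morphisms in $\mathrm{Top}_\subset$, so Proposition \ref{prp:functor} and the naturality of cap product with respect to such morphisms produce a commutative square comparing the two structures as soon as the fundamental classes are carried to one another, i.e.\ $\beta_{I\cup_a J}\mapsto \beta_I\otimes \alpha_J$. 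But this is exactly the compatibility condition built into the definition of a distinguished $(n,d)$-class, and the analogous compatibility for the operad case gives the required compatibility with the Poincaré--Koszul isomorphism of $O$. Naturality of the isomorphism with respect to module maps preserving the fundamental class follows from the naturality of the cap product in the same way.

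The main obstacle I anticipate is bookkeeping: tracking the orientation and sign conventions through the shifts $s_{(n,d)}$ and $s_n$ simultaneously, and verifying that the module compatibility square really commutes at the chain level rather than just in homology. Both reduce to diagram chases using the functoriality of $P(-,-)$ on $\mathrm{Top}_\subset$ established in Section \ref{sec:verdier} and the explicit description of partial composites in the $W$-construction, so no genuinely new input beyond Poincaré duality and Koszul--Verdier duality is needed.
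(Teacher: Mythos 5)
Your proposal is correct and matches the paper's (largely deferred) argument: the paper states that the theorem follows "by unraveling definitions" from the cited earlier work, and the unraveling is exactly what you carry out — the cap-product isomorphisms are built into the definition of a Poincaré--Koszul module pair, dualizing over $k$ converts $\bar H^{-*}(B(R/A))$ into $\bar H_*(K(\Sigma^\infty R/A))$ with the shift absorbed by $s_{(n,d)}$, and the module compatibility and naturality reduce to the condition $\beta_{I\cup_a J}\mapsto\beta_I\otimes\alpha_J$ together with naturality of cap products along the $\mathrm{Top}_\subset$ morphisms of Lemma \ref{lem:operadinpair}. The appeal to sphericality of $\xi_{(R,A)}$ is not needed (the duality isomorphisms are definitional), but it does no harm.
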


Just as in the operad case we have a topological characterization of Poincaré-Koszul right module pairs.

\begin{prp}
A closed right module pair $(R,A)$ over a  Poincaré-Koszul operad $O$ is Poincaré-Koszul with respect to all fields $k$, if and only if the fibers of $\xi_{(R,A)}$ are all spherical, each spherical fibration is $k$-orientable, and there are fixed orientations of the fibers such that the partial composites induce degree $1$ maps of the fibers.
\end{prp}

In \cite[Theorem 7.8]{malin2}, we proved that for a compact, framed manifold with boundary $M$, the closed right module pair $(\mathcal{F}_M,\widehat\partial \mathcal{F}_M)$ was Poincaré-Koszul of dimension $(n,n)$. Hence all the fibers of $\xi_{(\mathcal{F}_M,\widehat\partial \mathcal{F}_M)}$ are spherical with compatible orientations. Ultimately, we will show that $\xi_{(\mathcal{F}_M,\widehat\partial \mathcal{F}_M)}$ is in fact trivial, and, as a consequence, that $(\mathcal{F}_M,\widehat\partial \mathcal{F}_M)$ is Koszul self dual. Observe that for a right module $R$ over an operad $O$ in $(\mathrm{Top}_*,\wedge)$ or $(\mathrm{Sp},\wedge)$, the levelwise suspension $(\Sigma^d R)(I):=\Sigma^d R(I)$ is still a right $O$-module.

\begin{dfn} 
An $(n,d)$-sphere right module $S_{(n,d)}$ over an $n$-sphere operad $S_n$ is a right module in $(\mathrm{Sp},\wedge)$ with a zigzag equivalence to $\Sigma^d \mathrm{CoEnd}(S^n)$ compatible with a zigzag equivalence from $S_n$ to $\mathrm{CoEnd}(S^n)$.
\end{dfn}

As before, we will abuse notation by not specifying a specific model of $S_{(n,d)}$ when defining right module suspension.

\begin{dfn}\label{dfn:modSus}
For a right module $Q$ over $P$ in $(\mathrm{Sp},\wedge)$ the $(n,d)$-suspension of $Q$ is the right $s_n P$-module,
\[s_{(n,d)} Q := S_{(n,d)} \wedge Q.\]

\end{dfn}

\begin{dfn}
A right module pair $(R,A)$ over an operad $O$ in $(\mathrm{Top},\times)$ is Koszul self dual of dimension $(n,d)$ with respect to a Koszul zigzag of $O$ if there is a zigzag equivalence, called a Koszul zigzag of $(R,A)$, from $s_{(n,d)}K(\Sigma^\infty R/A)$ to $\Sigma^\infty_+ R$, compatible with the Koszul zigzag of O.
\end{dfn}

\begin{dfn}
Assume we have a right module $Q$ over an operad $P$ in parametrized spectra which covers a right module $R$ over an operad $O$ in $(\mathrm{Top},\times)$. Given an $n$-trivialization of $P$, a zigzag equivalence from $Q$ to $ R \bar\wedge S_{(n,d)}$ which is compatible with the trivializaton of $P$ is called a (spherical) $(n,d)$-trivialization.
\end{dfn}

\begin{thm}\label{thm:moduleDuality}
For a closed operad $O$ in $(\mathrm{Top},\times)$ and a closed right module pair $(R,A)$ over $O$ the following are equivalent:

\begin{enumerate}
  \item $O$ is Koszul self dual of dimension $n$ and $(R,A)$ is Koszul self dual of dimension $(n,d)$.
  \item There is a $(-n)$-trivialization of $\xi_O$ and a $(-n,-d)$-trivialization of $\xi_{(R,A)}$.
  \item  $O$ and $(R,A)$ are Poincaré-Koszul of dimension $n,(n,d)$, respectively, and there is a pair of compatible zigzag maps of operads and right modules from $K (\Sigma^\infty_+ O) $ to  $S_{-n}$ and 
$K(\Sigma^\infty R/A)$ to $ S_{(-n,-d)}$ such that the maps
  \[H_{n-n|I|}(K(\Sigma^\infty_+ O)(I)) \rightarrow H_{n-n|I|}(S_{-n}(I))\]
  \[H_{n-n|I|-d}(K(\Sigma^\infty_+ R)(I)) \rightarrow H_{n-n|I|-d}(S_{(-n,-d)}(I))\]
  restrict to isomorphisms on the wedge summands of $K(\Sigma^\infty_+ O)(I)$ and $K(\Sigma^\infty_+ R)(I)$ corresponding to the different path components of $O(I)$ and $R(I)$.
\end{enumerate}

\end{thm}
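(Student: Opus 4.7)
The plan is to mimic the proof of Theorem \ref{thm:strongKoszul} in the presence of the Koszul--Verdier duality for modules (Proposition \ref{prp:koszulverdiermodule}), leveraging the already established operad case and carefully tracking the compatibility between operad and module data throughout. The three-way equivalence has the operadic trivialization/Koszul zigzag of $O$ built in, so each implication for $(R,A)$ will be constructed "relative to" the corresponding statement for $O$ that has already been proven.

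The implication $(2)\Rightarrow(1)$ is essentially formal. Applying $\mathrm{Th}(-)$ to an $(-n,-d)$-trivialization $\xi_{(R,A)} \to R \bar\wedge S_{(-n,-d)}$ that covers a $(-n)$-trivialization $\xi_O \to O \bar\wedge S_{-n}$, and then composing with the inverse of the Koszul--Verdier equivalences of Propositions \ref{prp:koszulverdieroperad} and \ref{prp:koszulverdiermodule}, produces a zigzag equivalence from $K(\Sigma^\infty R/A)$ to $s_{(n,d)}^{-1}\Sigma^\infty_+ R$ compatible with the corresponding operad zigzag. Dualizing gives the Koszul zigzag of $(R,A)$ in the form required in $(1)$, and compatibility is preserved because $\mathrm{Th}(-)$ was already shown to be a symmetric monoidal functor on restricted operads/modules (Definitions \ref{dfn:thomOp}, \ref{dfn:thomMod}, Proposition \ref{prp:wedge}).

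For $(1)\Rightarrow(2)$, the key move is the one used in Theorem \ref{thm:strongKoszul}: a Koszul zigzag of $(R,A)$ gives a zigzag map $\mathrm{Th}(\xi_{(R,A)}) \to S_{(-n,-d)}$ covering the analogous zigzag for $O$. Using the identification of maps out of a Thom complex (maps of parametrized spectra into a constant spectrum), this is adjoint to a map $\xi_{(R,A)} \to W(R) \bar\wedge K(\Sigma^\infty R/A)$ of modules covering the operadic counterpart $\xi_O \to W_{[0,\infty]}(O) \bar\wedge K(\Sigma^\infty_+ O)$. Smashing fiberwise with the Koszul zigzag from $K(\Sigma^\infty R/A)$ to $S_{(-n,-d)}$ (and similarly for $O$) produces a compatible zigzag of modules ending at $W(R) \bar\wedge S_{(-n,-d)}$. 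A homotopy-retraction argument — collapsing the base $W(R)$ to a point and checking that the composition recovers the original zigzag on fibers — shows the zigzag is a fiberwise equivalence, hence a $(-n,-d)$-trivialization.

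The remaining equivalences tie in $(3)$. The implication $(1)\Rightarrow(3)$ packages the Koszul zigzag into a zigzag of Koszul duals, and the Poincaré-Koszul hypothesis is automatic by \cite[Theorem 7.8]{malin2} (for $(R,A) = (\mathcal{F}_M,\widehat\partial\mathcal{F}_M)$-type pairs, or by assumption) combined with the bottom-homology isomorphism being forced by the self-duality statement. For $(3)\Rightarrow(2)$, repeat the construction of the previous paragraph to produce a zigzag $\xi_{(R,A)} \to W(R) \bar\wedge S_{(-n,-d)}$; the Poincaré-Koszul assumption ensures via Proposition \ref{prp:spherical} (and its module analogue) that the fibers of $\xi_{(R,A)}$ are spherical with compatible orientations, so the bottom relative homology detects the fiberwise degree of the constructed map. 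The assumed isomorphism on $H_{n-n|I|-d}$ then guarantees the map is a fiberwise equivalence. The main obstacle is purely notational: keeping track of the compatibility between operad and module data across the zigzags and verifying that the composition/retraction argument of Theorem \ref{thm:strongKoszul} passes to modules without modification — which it does because all constructions ($\bar\wedge$, $\mathrm{Th}$, $P(-,-)$) are appropriately bifunctorial and lax monoidal by Lemma \ref{lem:lax} and Proposition \ref{prp:functor}.
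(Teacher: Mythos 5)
Your proposal is correct and follows exactly the route the paper intends: the paper omits the proof with the remark that "all proofs are almost identical to the operad case," and your argument is precisely the module-level transcription of Theorem \ref{thm:strongKoszul}, using Proposition \ref{prp:koszulverdiermodule} in place of Proposition \ref{prp:koszulverdieroperad}, the adjoint map $\xi_{(R,A)} \to W(R)\bar\wedge K(\Sigma^\infty R/A)$, the retraction-onto-the-fiber argument, and the module analogue of Proposition \ref{prp:spherical} for the homological detection in $(3)\Rightarrow(2)$.
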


\section{Self duality for submanifolds of $\mathbb{R}^n$} \label{sec:koszulself}
In this section we will show how the existence of a Koszul zigzag for $\Sigma^\infty_+\mathcal{F}_n$ gives rise to a Koszul zigzag for $(\mathcal{F}_M,\widehat\partial \mathcal{F}_M)$ when $M$ is a compact, codimension $0$ submanifold of $\mathbb{R}^n$. It is known that such Koszul zigzags exist for $\Sigma^\infty_+ \mathcal{F}_n$ \cite[Theorem 1.1]{ching_salvatore}:

\begin{thm}[Ching-Salvatore]
There is a zigzag of equivalences of operads from $\Sigma^\infty_+ \mathcal{F}_n$ to the $n$-fold suspension of its Koszul dual: \[\Sigma^\infty_+ \mathcal{F}_n \simeq \dots \simeq  s_n K(\Sigma^\infty_+ \mathcal{F}_n).\]
\end{thm}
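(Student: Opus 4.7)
My strategy is to leverage Theorem~\ref{thm:strongKoszul}, which reduces the Koszul self duality of $\mathcal{F}_n$ to constructing a $(-n)$-trivialization of the Koszul dualizing fibration $\xi_{\mathcal{F}_n}$, or equivalently to verifying condition~(3) there. I would first invoke the fact, established in \cite[Theorem 5.10]{malin2}, that $\mathcal{F}_n$ is Poincaré-Koszul of dimension $n$. By Proposition~\ref{prp:spherical} this already guarantees that $\xi_{\mathcal{F}_n}$ has $(-n)$-spherical fibers whose partial composites preserve a coherent choice of orientation. What remains is to promote these orientations to an honest operadic trivialization of $\xi_{\mathcal{F}_n}$, or equivalently to exhibit a zigzag of operad maps $K(\Sigma^\infty_+ \mathcal{F}_n) \to S_{-n}$ inducing a bottom-degree homology isomorphism on every arity.

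The natural source for the trivialization is the stable framing of each $\mathcal{F}_n(I)$ inherited from the framing of $\mathbb{R}^n$. Since $\mathcal{F}_n(I)$ is a compact manifold with corners obtained as a translation-and-scaling quotient of a configuration space in $\mathbb{R}^n$, its tangent bundle is stably trivial, and the same holds for the pair $(W_{[0,\infty]}(\mathcal{F}_n)(I), \partial W_{[0,\infty]}(\mathcal{F}_n)(I))$ used to model $\xi_{\mathcal{F}_n}$. A Pontryagin--Thom collapse associated to this stable framing should produce, arity by arity, a map of spectra $K(\Sigma^\infty_+ \mathcal{F}_n)(I) \simeq \mathrm{Th}(\xi_{\mathcal{F}_n}(I)) \to S_{-n}(I)$ realizing the Poincaré-Koszul fundamental class on the bottom homology group, which is precisely what condition~(3) demands.

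The main obstacle is organizing these trivializations into an operad map (up to zigzag) so that Theorem~\ref{thm:strongKoszul}(3) can be applied. The operad composition is modeled on the codimension-zero boundary inclusion $\mathcal{F}_n(I) \times \mathcal{F}_n(J) \hookrightarrow \mathcal{F}_n(I \cup_a J)$, and the stable framings on the two sides must be compared along this boundary in a way that respects the sphere operad composition. In practice, Ching--Salvatore handle this by routing the construction through intermediate operad models built from the coendomorphism operads $\mathrm{CoEnd}(S^n)$ together with auxiliary small-disk models of $E_n$, so that at each step the partial composites come from smash-product composition of sphere maps and the required compatibility is forced by the operad structure rather than checked by hand. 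Setting up these intermediate models and constructing the chain of zigzag equivalences between them, $\Sigma^\infty_+ \mathcal{F}_n$, and $s_n K(\Sigma^\infty_+ \mathcal{F}_n)$ constitutes the technical core of the proof, and is precisely the \emph{point-set level construction} the introduction warns is difficult.
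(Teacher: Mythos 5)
The first thing to say is that the paper does not prove this statement at all: it is imported verbatim as \cite[Theorem 1.1]{ching_salvatore}, and the entire architecture of the paper (as the introduction explains) is designed around treating the operad-level self duality of $E_n$/$\mathcal{F}_n$ as a black box precisely because its proof is a long point-set construction that the author did not want to reproduce or adapt. So any comparison here is between your sketch and the external Ching--Salvatore argument, not an internal proof.

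Your reduction is sound as far as it goes: Poincar\'e--Koszulness of $E_n$ from \cite[Theorem 5.10]{malin2} together with the implication $(3)\Rightarrow(2)\Rightarrow(1)$ of Theorem \ref{thm:strongKoszul} (which the paper proves without presupposing the zigzag, so there is no circularity in that direction) does reduce the theorem to producing a \emph{zigzag map of operads} $K(\Sigma^\infty_+\mathcal{F}_n)\to S_{-n}$ hitting the bottom homology classes. The genuine gap is that you never produce this map. An arity-by-arity Pontryagin--Thom collapse coming from the stable framing of each $\mathcal{F}_n(I)$ gives spectrum maps with the right homological behavior, but condition (3) requires these to assemble into an operad map, i.e.\ to be compatible with all partial composites up to coherent homotopy --- and the product framing on $\mathcal{F}_n(I)\times\mathcal{F}_n(J)$ versus the restricted framing along the codimension-zero boundary inclusion into $\mathcal{F}_n(I\cup_a J)$ do not agree on the nose, only up to choices that must themselves be made operadically. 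You correctly identify this as ``the technical core'' and then resolve it by saying that Ching--Salvatore handle it via intermediate $\mathrm{CoEnd}(S^n)$-type models. At that point the argument bottoms out in the same citation the paper uses for the whole theorem, so the proposal is a plausible strategy outline rather than a proof: the one step that carries all the difficulty is deferred to the reference being proved.
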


\noindent For the rest of this section, we fix any such Koszul zigzag.

\begin{lem}\label{lem:contractible}
There is an equivalence of right $B((\mathcal{F}_n)_+)$-comodules \[B(\mathcal{F}_{(\mathbb{R}^n)^+}):=B((\mathcal{F}_{(\mathbb{R}^n)^+}),(\mathcal{F}_n)_+,1) \xrightarrow{\simeq} \Sigma^n B(1,(\mathcal{F}_n)_+,1)=:\Sigma^n B((\mathcal{F}_n)_+).\]
\end{lem}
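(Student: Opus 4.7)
The strategy is to identify both sides with one-point compactifications of (quotients of) configuration spaces in $\mathbb{R}^n$, then exploit the free action of the translation--scaling group $\mathbb{R}^n \rtimes (0,\infty)$ on $F(\mathbb{R}^n,I)$ to exhibit the $\Sigma^n$-shift.

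First, apply the Salvatore-type identifications $W((\mathcal{F}_n)_+)\cong (\mathcal{F}_n)_+$ and $W(\mathcal{F}_{(\mathbb{R}^n)^+})\simeq \mathcal{F}_{(\mathbb{R}^n)^+}$ (cf.\ Lemma \ref{lem:salvatore}). Under these, the subsymmetric sequence $\partial W$ of trees with an $\infty$-length edge corresponds to the manifold-with-corners boundary of the Fulton--MacPherson space (configurations with infinitesimal collisions). Collapsing this boundary yields
\[
B(\mathcal{F}_{(\mathbb{R}^n)^+})(I) \;\simeq\; F(\mathbb{R}^n,I)^+, \qquad B((\mathcal{F}_n)_+)(I) \;\simeq\; \mathring{\mathcal{F}}_n(I)^+ \wedge S^1,
\]
where $\mathring{\mathcal{F}}_n(I)$ denotes the interior of $\mathcal{F}_n(I)$ and the $S^1$ on the right records the $(0,\infty)^+$-coordinate on the root edge appearing in the operadic bar construction.

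Next, for $|I|\geq 2$ the action of $\mathbb{R}^n \rtimes (0,\infty)$ on $F(\mathbb{R}^n,I)$ by translation and positive scaling is free with orbit space $\mathring{\mathcal{F}}_n(I)$; a canonical section (e.g., centering the configuration at the origin and scaling to unit diameter) exhibits a homeomorphism $F(\mathbb{R}^n,I) \cong \mathring{\mathcal{F}}_n(I) \times \mathbb{R}^n \times (0,\infty)$. Taking one-point compactifications and using $(\mathbb{R}^n)^+\cong S^n$, $(0,\infty)^+\cong S^1$ yields
\[
F(\mathbb{R}^n,I)^+ \;\cong\; \mathring{\mathcal{F}}_n(I)^+ \wedge S^n \wedge S^1 \;=\; \Sigma^n \bigl(\mathring{\mathcal{F}}_n(I)^+ \wedge S^1\bigr),
\]
which combined with the first step produces the desired levelwise equivalence. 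The arity-one case $B(\mathcal{F}_{(\mathbb{R}^n)^+})(1)\simeq S^n \simeq \Sigma^n S^0 \simeq \Sigma^n B((\mathcal{F}_n)_+)(1)$ is immediate from the definitions.

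Finally one must check that this levelwise equivalence is a map of right $B((\mathcal{F}_n)_+)$-comodules. The decomposite of $B(\mathcal{F}_{(\mathbb{R}^n)^+})(I\cup_a J)$ isolates, when possible, an infinitesimal $J$-subcluster at one of the $I$-positions, landing in $B(\mathcal{F}_{(\mathbb{R}^n)^+})(I)\wedge B((\mathcal{F}_n)_+)(J)$; on $\Sigma^n B((\mathcal{F}_n)_+)$ the decomposite applies the cooperad structure of $B((\mathcal{F}_n)_+)$ to the second smash factor while fixing $S^n$. The centroid-and-diameter trivialization depends only on the large-scale $I$-subconfiguration---the infinitesimal $J$-subcluster contributes at vanishing scale---so the two decomposites intertwine. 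The main obstacle is precisely this last step: verifying in detail that the centroid-and-diameter section is well-behaved near the boundary strata of the Fulton--MacPherson compactification, and that the root-edge $(0,\infty)$-coordinate of $B((\mathcal{F}_n)_+)$ absorbs the scaling factor naturally with respect to grafting of trees. A section chosen to be compatible with operadic grafting in $\mathcal{F}_n$ makes this tractable.
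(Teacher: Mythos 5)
Your levelwise identifications are correct, and this is a genuinely different route from the paper's, but the proof has a gap at exactly the point you flag --- and that point is the entire content of the lemma. The decomposite on $B(\mathcal{F}_{(\mathbb{R}^n)^+})(I\cup_a J)$ is nontrivial precisely on trees that split at $a$ with splitting edge of length in $(0,\infty)$ (length $\infty$ is the basepoint, length $0$ composes away); under the Salvatore identification these correspond to configurations in which the $J$-points form a cluster of \emph{finite, nonzero} relative scale, not a cluster ``at vanishing scale.'' Consequently the centroid and diameter of the full $I\cup_a J$-configuration genuinely depend on the $J$-cluster and differ from those of the $I$-configuration obtained by replacing the cluster with its center, so the square comparing the two decomposites does not commute on the nose for the centroid-and-diameter section; a section strictly compatible with grafting is obstructed for the same reason. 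One would instead have to produce a homotopy making the square commute and verify its coherence across all arities and iterated decomposites, which is essentially as hard as the lemma itself. Your proposal names this obstacle but does not overcome it.

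The paper sidesteps the issue entirely by working one level earlier: it constructs an equivalence of right modules $\mathcal{F}_{(\mathbb{R}^n)^+}\to\Sigma^n 1$, where the target is a single point in every arity $|I|>1$, so the map is forced there and module compatibility is automatic; the only thing to prove is that $\mathcal{F}_{(\mathbb{R}^n)^+}(I)$ is contractible to the basepoint for $|I|>1$, which follows by scaling configurations off to infinity. Applying $B(-,(\mathcal{F}_n)_+,1)$, which preserves such equivalences and satisfies $B(\Sigma^n 1,(\mathcal{F}_n)_+,1)=\Sigma^n B(1,(\mathcal{F}_n)_+,1)$, then yields the comodule equivalence with no compatibility left to check. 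If you want to keep the geometric content of your argument, the cleanest repair is to reorganize it into this form: your scaling trivialization is exactly the contraction the paper uses, applied before the bar construction rather than after.
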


\begin{proof}
 We will show there is an equivalence of right modules $\mathcal{F}_{(\mathbb{R}^n)^+} \rightarrow \Sigma^n 1$. Applying $B$ yields a map of right comodules which implies the result since taking $\Sigma^n$ commutes with bar constructions of right modules.
If $|I|=1$, $\mathcal{F}_{(\mathbb{R}^n)^+}(*)=S^n=  \Sigma^n 1(*)$, so we take the map to be the identity. In all other degrees the map is forced to be constant, so we must show $\mathcal{F}_{(\mathbb{R}^n)^+}(I)$ is contractible if $|I|>1$. By collar neighborhoods, we may assume we are working with the subspace with no infinitesimal configurations. In this case, we scale by $t \in [1,\infty]$ to contract to the basepoint. This is continuous since for all such configurations, there is at least one point not on the origin. 
\end{proof}

\begin{prp}\label{prp:loctrivial}
There is an $n$-trivialization of $\xi_{\mathcal{F}_n}$ and a compatible $(n,n)$-trivialization of $\xi_{(\mathcal{F}_{D^n},\widehat\partial\mathcal{F}_{D^n})}$.
\end{prp}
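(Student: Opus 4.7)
The plan is to verify condition~(3) of Theorem~\ref{thm:moduleDuality}, which reduces the desired trivializations to producing compatible zigzag maps of operads $K(\Sigma^\infty_+ \mathcal{F}_n) \to S_{-n}$ and modules $K(\Sigma^\infty \mathcal{F}_{D^n}/\widehat{\partial}\mathcal{F}_{D^n}) \to S_{(-n,-n)}$ inducing isomorphisms on the appropriate bottom homology. The two Poincaré--Koszul hypotheses (dimension $n$ for $\mathcal{F}_n$, dimension $(n,n)$ for the module pair) are already in hand from \cite{malin2}, so only the zigzags remain.

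For the operad half, start with the fixed Ching--Salvatore Koszul zigzag $\Sigma^\infty_+ \mathcal{F}_n \simeq s_n K(\Sigma^\infty_+ \mathcal{F}_n)$, rewritten as a zigzag $K(\Sigma^\infty_+ \mathcal{F}_n) \to s_{-n}\Sigma^\infty_+ \mathcal{F}_n$. Postcomposing with $s_{-n}$ applied to the collapse map $\Sigma^\infty_+ \mathcal{F}_n \to \Sigma^\infty_+\mathrm{com} \simeq \mathbf{1}$ (the map that forgets the tree structure, used already in Section~\ref{sec:conjecture}) lands in $s_{-n}\mathbf{1} \simeq S_{-n}$. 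Sphericity of the fibers of $\xi_{\mathcal{F}_n}$, coming from Poincaré--Koszulness of $\mathcal{F}_n$, forces this map to be the generator on bottom homology for every $I$.

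For the module half, first identify $\mathcal{F}_{D^n}/\widehat{\partial}\mathcal{F}_{D^n}$ with $\mathcal{F}_{(\mathbb{R}^n)^+}$: an interior configuration on $D^n$ is a configuration on $\mathrm{int}(D^n) \cong \mathbb{R}^n$, and a configuration touching $\partial D^n$ is precisely what gets collapsed to the basepoint in the one-point compactification. The lemma immediately preceding the proposition then provides a comodule equivalence $B(\mathcal{F}_{(\mathbb{R}^n)^+}) \simeq \Sigma^n B((\mathcal{F}_n)_+)$, and applying Spanier--Whitehead duality converts this into a module equivalence
\[K(\Sigma^\infty \mathcal{F}_{D^n}/\widehat{\partial}\mathcal{F}_{D^n}) \simeq \Sigma^{-n} K(\Sigma^\infty_+ \mathcal{F}_n)\]
over $K(\Sigma^\infty_+ \mathcal{F}_n)$. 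Post-composing with $\Sigma^{-n}$ of the operad zigzag from the previous step gives the desired module zigzag landing in $\Sigma^{-n} S_{-n} \simeq S_{(-n,-n)}$, automatically compatible with the operad zigzag by construction. The bottom-homology isomorphism is inherited from the fact that the first step is already a genuine equivalence. With the two zigzags assembled, Theorem~\ref{thm:moduleDuality}(3) yields both trivializations at once.

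The main obstacle will be bookkeeping the comodule compatibility: one must check that the equivalence from the preceding lemma respects the $B((\mathcal{F}_n)_+)$-comodule structure (not merely the symmetric sequence structure) and that Spanier--Whitehead duality carries this compatibility across to modules over $K(\Sigma^\infty_+ \mathcal{F}_n)$. The lifting of the resulting bottom-homology information to an actual trivialization of $\xi_{(\mathcal{F}_{D^n},\widehat{\partial}\mathcal{F}_{D^n})}$ uses exactly the same mechanism as Theorem~\ref{thm:strongKoszul}: a map out of the Thom complex corresponds via Proposition~\ref{prp:wedge} to a map into a trivial parametrized spectrum, and sphericity of the fibers (from Poincaré--Koszulness) guarantees that a bottom-homology isomorphism on fibers is a fiberwise equivalence.
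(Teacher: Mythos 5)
Your proof is correct, but it routes through a different clause of Theorem~\ref{thm:moduleDuality} than the paper does. The paper verifies condition~(1): it observes $\mathcal{F}_{D^n}\simeq\mathcal{F}_{\mathbb{R}^n}\simeq\mathcal{F}_n$ as modules, combines the Ching--Salvatore Koszul zigzag with the identification $\mathcal{F}_{(\mathbb{R}^n)^+}\cong\mathcal{F}_{D^n}/\widehat\partial\mathcal{F}_{D^n}$ and the preceding lemma, and thereby assembles a Koszul zigzag from $\Sigma^\infty_+\mathcal{F}_{D^n}$ to $s_{(n,n)}K(\Sigma^\infty\mathcal{F}_{D^n}/\widehat\partial\mathcal{F}_{D^n})$ compatible with the operad zigzag; the implication $(1)\Rightarrow(2)$ then hands over the trivializations with no homological input. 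You instead verify condition~(3), which uses the same three ingredients (the Ching--Salvatore zigzag, the lemma $B(\mathcal{F}_{(\mathbb{R}^n)^+})\simeq\Sigma^n B((\mathcal{F}_n)_+)$, and the quotient identification) but packages them as zigzag maps to $S_{-n}$ and $S_{(-n,-n)}$, and therefore needs the Poincar\'e--Koszulness of $\mathcal{F}_n$ and of $(\mathcal{F}_{D^n},\widehat\partial\mathcal{F}_{D^n})$ from \cite{malin2} as an extra hypothesis. That input is available in the paper, so your route closes, but it is strictly more expensive: $(1)\Rightarrow(2)$ is the Wall-style argument that works for any Koszul zigzag, whereas $(3)\Rightarrow(2)$ needs Poincar\'e--Koszulness precisely to detect that the fibers of the dualizing fibration are spherical.

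Two small repairs. First, your assertion that ``sphericity of the fibers of $\xi_{\mathcal{F}_n}$ forces the map to be the generator on bottom homology'' is not quite the right justification: sphericity only tells you the bottom homology of $K(\Sigma^\infty_+\mathcal{F}_n)(I)$ is one-dimensional, and a map of one-dimensional vector spaces can still be zero. What actually gives the isomorphism is that under the zigzag equivalence $K(\Sigma^\infty_+\mathcal{F}_n)\simeq s_{-n}\Sigma^\infty_+\mathcal{F}_n$ the bottom homology is identified with $H_0(\mathcal{F}_n(I))$, and the augmentation $\Sigma^\infty_+\mathcal{F}_n(I)\to\mathrm{com}(I)=S^0$ is an $H_0$-isomorphism because $\mathcal{F}_n(I)$ is connected. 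Second, $\mathrm{com}$ is not the unit symmetric sequence $1$ (it is $S^0$ in every arity); the computation still works because $s_{-n}\mathrm{com}=S_{-n}$, but the notation $\Sigma^\infty_+\mathrm{com}\simeq\mathbf{1}$ should be removed.
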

\begin{proof}
By Theorem \ref{thm:moduleDuality}, it suffices to show $(\mathcal{F}_{D^n},\widehat\partial\mathcal{F}_{D^n})$ is Koszul self dual of dimension $(n,n)$. There are straightforward equivalences of right modules \[\mathcal{F}_{D^n}  \simeq \mathcal{F}_{\mathbb{R}^n}  \simeq \mathcal{F}_n .\] Hence, a Koszul zigzag for $\mathcal{F}_n$ together with the observation $\mathcal{F}_{(\mathbb{R}^n)^+} \cong \mathcal{F}_{D^n}/\widehat\partial\mathcal{F}_{D^n}$ and the above lemma, implies there is a zigzag of right module equivalences from $s_{(n,n)}K(\mathcal{F}_{D^n}/\widehat\partial\mathcal{F}_{D^n})$ to $\Sigma^\infty_+ \mathcal{F}_{D^n}$ compatible with the Koszul zigzag from $s_n K(\Sigma^\infty_+\mathcal{F}_n)$ to $\Sigma^\infty_+\mathcal{F}_n$ after extending the Koszul zigzag by the identities of $\Sigma^\infty_+ \mathcal{F}_n$ and $s_{(n,n)}K(\Sigma^\infty_+ \mathcal{F}_n)$ to account for the application of the equivalences $\mathcal{F}_{D^n}  \simeq \mathcal{F}_{\mathbb{R}^n}  \simeq \mathcal{F}_n$.
\end{proof}

\begin{lem} \label{lem:codimension0}
If $M$ is a compact, codimension $0$ submanifold of $\mathbb{R}^n$, then $\xi_{(\mathcal{F}_M,\widehat\partial\mathcal{F}_M)}$ has an $(n,n)$-trivialization. Hence, there is a zigzag equivalence \[ s_{(n,n)}K(\mathcal{F}_{M}/\widehat\partial \mathcal{F}_M)) \quad  \mathrm{to}\quad \Sigma^\infty_+\mathcal{F}_M\] which can be taken to be natural with respect to inclusion. 
\end{lem}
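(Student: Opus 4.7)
The plan is to reduce to Proposition~\ref{prp:loctrivial} by embedding $M$ inside a large disk and transferring the trivialization of $\xi_{(\mathcal{F}_{D^n}, \widehat\partial\mathcal{F}_{D^n})}$ to $M$ via functoriality in $(\mathrm{Top}_\subset, \times)$. After rescaling, I may assume $M \subset \mathrm{int}(D^n)$ so that $\partial M$ lies in $\mathrm{int}(D^n)$. The codimension zero inclusion then induces an $\mathcal{F}_n$-equivariant map of modules $\mathcal{F}_M \hookrightarrow \mathcal{F}_{D^n}$, which I claim defines a morphism of module pairs $(\mathcal{F}_M, \widehat\partial\mathcal{F}_M) \to (\mathcal{F}_{D^n}, \widehat\partial\mathcal{F}_{D^n})$ in $(\mathrm{Top}_\subset, \times)$: restricted to the complement of $\widehat\partial\mathcal{F}_M$ it is an open embedding onto configurations in $\mathrm{int}(M) \subset \mathrm{int}(D^n)$; the closure of $\mathcal{F}_M - \widehat\partial\mathcal{F}_M$ inside $\mathcal{F}_{D^n}$ is exactly $\mathcal{F}_M$; and the preimage of $\widehat\partial\mathcal{F}_{D^n}$ is empty since $\partial M$ sits strictly in the interior of $D^n$. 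This extends compatibly to the $W$-construction, yielding a morphism of pairs $(W(\mathcal{F}_M), DW(\mathcal{F}_M, \widehat\partial\mathcal{F}_M)) \to (W(\mathcal{F}_{D^n}), DW(\mathcal{F}_{D^n}, \widehat\partial\mathcal{F}_{D^n}))$ in $\mathrm{Top}_\subset$.

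Next, I apply functoriality of $P$ (Proposition~\ref{prp:functor}) to obtain a map of modules in parametrized spectra
\[\xi_{(\mathcal{F}_M, \widehat\partial\mathcal{F}_M)} \to \xi_{(\mathcal{F}_{D^n}, \widehat\partial\mathcal{F}_{D^n})}\]
covering the base map and compatible with the identity of $\xi_{\mathcal{F}_n}$. Both source and target have spherical fibers because the respective module pairs are Poincaré-Koszul of dimension $(n,n)$. To show the map is a fiberwise weak equivalence, I will invoke Corollary~\ref{cor:spherical}: it suffices that the associated collapse induces an isomorphism on top $n|I|$-homology for each $I$. Under the Salvatore-type identifications $W(\mathcal{F}_X) \cong \mathcal{F}_X$ recalled in Section~\ref{sec:operad}, this collapse $W(\mathcal{F}_{D^n})/DW \to W(\mathcal{F}_M)/DW$ becomes the Pontryagin--Thom collapse $F(\mathrm{int}(D^n), I)^+ \to F(\mathrm{int}(M), I)^+$ associated to the codimension zero open inclusion $\mathrm{int}(M) \hookrightarrow \mathrm{int}(D^n)$, which is manifestly degree one on top homology.

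Finally, the $(-n,-n)$-trivialization of $\xi_{(\mathcal{F}_{D^n}, \widehat\partial\mathcal{F}_{D^n})}$ from Proposition~\ref{prp:loctrivial} pulls back along $W(\mathcal{F}_M) \to W(\mathcal{F}_{D^n})$ to a zigzag equivalence with $W(\mathcal{F}_M) \bar\wedge S_{(-n,-n)}$. Composing this with the fiberwise equivalence above yields a zigzag trivialization of $\xi_{(\mathcal{F}_M, \widehat\partial\mathcal{F}_M)}$ compatible with that of $\xi_{\mathcal{F}_n}$, and applying Koszul--Verdier duality (Proposition~\ref{prp:koszulverdiermodule}) on Thom complexes then produces the asserted zigzag equivalence $s_{(n,n)}K(\mathcal{F}_M/\widehat\partial\mathcal{F}_M) \simeq \Sigma^\infty_+ \mathcal{F}_M$. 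Naturality with respect to codimension zero inclusions will follow because each step is functorial in $(\mathrm{Top}_\subset, \times)$. The most delicate part is verifying that the $\mathrm{Top}_\subset$ conditions survive the passage to $W$-constructions, particularly how tree grafting and edge-length degeneration interact with the three boundary conditions; once this is in hand, the top-homology check and the spherical fibration criterion make the rest of the argument essentially formal.
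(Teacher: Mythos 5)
Your proposal follows the paper's own proof essentially step for step: embed $M$ in $D^n$, verify the inclusion of module pairs lies in $(\mathrm{Top}_\subset,\times)$ and passes to the $W$-constructions, reduce the fiberwise equivalence of dualizing fibrations to a top-homology check via Corollary \ref{cor:spherical}, and transfer the trivialization of Proposition \ref{prp:loctrivial}. The only cosmetic difference is that you see the degree-one property by identifying the bar constructions with one-point compactified configuration spaces, whereas the paper argues directly that the map of $W$-constructions is a codimension $0$ embedding of topological $n|I|$-manifolds; both give the same collapse-map conclusion.
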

\begin{proof}
Without loss of generality, we prove the result for manifolds $M$ embedded in $D^n$. As such, $\partial D^n \cap M \subset \partial M$. Note that the induced right module map \[(W(\mathcal{F}_M),DW(\mathcal{F}_M,\widehat\partial\mathcal{F}_M)) \rightarrow (W(\mathcal{F}_{D^n}),DW(\mathcal{F}_{D^n},\widehat\partial\mathcal{F}_{D^n}))\] lies in $\mathrm{Top}_\subset$ because it is true of the map \[(\mathcal{F}_M,\widehat\partial\mathcal{F}_M)\rightarrow (\mathcal{F}_{D^n},\widehat\partial\mathcal{F}_{D^n}).\] To produce the required trivialization, it then suffices to show the induced map
\[\xi_{(\mathcal{F}_M,\widehat\partial\mathcal{F}_M)} \rightarrow \xi_{(\mathcal{F}_{D^n},\widehat\partial\mathcal{F}_{D^n})}\] is an equivalence on fibers since by Proposition \ref{prp:loctrivial} the latter has a trivialization which can be pulled back to a trivialization of $\xi_{(\mathcal{F}_M,\widehat\partial\mathcal{F}_M)}$. By Corollary \ref{cor:spherical}, it suffices to show that for all finite sets $I$ the induced map 
\[\bar{H}_{n|I|}(B(\mathcal{F}_{D^n}/\widehat\partial \mathcal{F}_{D^n})(I);k) \rightarrow \bar{H}_{n|I|}(B(\mathcal{F}_{M}/\widehat\partial \mathcal{F}_M)(I);k)  \] is an isomorphism for all choices of $k$ when restricted to the wedge summands corresponding to the different path components of $M$. One observes as in \cite{salvatore_2021}, that the $W$-construction can be interpreted as adding a collar to the manifold $\mathcal{F}_M(I)$, which implies the map of $W$-constructions is actually a codimension 0 embedding of topological $n|I|$-manifolds! Since the map on bar constructions is the associated collapse map, it induces isomorphisms on top degree homology when we restrict our attention to the individual path components.
\end{proof}

\section{Weiss cosheaves in the category $\mathrm{RMod}_{O}$} \label{sec:factorization}

In this section, we study Weiss cosheaves taking values in $\mathrm{RMod}_{O}, \Sigma\mathrm{Seq}(\mathrm{Sp})$, and we use them to prove the compactly supported Koszul self duality of all tame, framed $n$-manifolds. It is very often in manifold theory that showing a property $P$ holds for submanifolds of $\mathbb{R}^n$ implies that it actually holds for all $n$-manifolds. One way to approach such an argument is to demonstrate that the property $P$ can be deduced from a statement about homotopy (co)sheaves for a particular family of open covers, and then we use this to show the local result implies the global result.

There is a particularly relevant family of open covers called Weiss covers which are defined to be the open covers for which every finite subset of the manifold is contained inside some open of the cover. It is known that, as a symmetric sequence, the collection of configuration spaces is a homotopy cosheaf with respect to Weiss covers \cite[Lemma 2.5]{campos_ricardo_idrissi}. Using this fact, we show that the assignments
\[M \mapsto \Sigma^\infty_+ \mathcal{F}_M\]
\[M \mapsto s_{(n,n)}\Sigma^\infty K(\mathcal{F}_{M^+}) \]
are topological Weiss cosheaves with values in $\mathrm{RMod}_{\Sigma^\infty_+ \mathcal{F}_n},\mathrm{RMod}_{s_n K(\Sigma^\infty_+ \mathcal{F}_n)}$, respectively. Using a classification result of Ayala-Francis regarding locally constant Weiss cosheaves, we reduce the problem of finding a natural equivalence between these functors to finding a natural equivalence of their restrictions to the category of open subsets of $\mathbb{R}^n$. The result will then follow from the special case of codimension 0 submanifolds of $\mathbb{R}^n$, by the naturality of Lemma \ref{lem:codimension0}.

Many flavors of categorical homotopy theory appear in the section, related by the functors below, which we introduce when needed. We let superscripts of categories denote enrichments, superscripts ``bi'' represent passage to bifibrant objects, and use $\mathrm{QuasiCat}$ to denote the collection of quasicategories, sometimes simply called $\infty$-categories.

\begin{center}
\begin{tikzcd}
\mathrm{Cat}^{\mathrm{Top}} \arrow[r, "\mathrm{Sing}"]                                                       & \mathrm{Cat}^{\mathrm{Kan}} \arrow[r, "\mathcal{N}"]                                                                          & \mathrm{QuasiCat} \\
\mathrm{ModelCat}^{\mathrm{Top}} \arrow[r, "\mathrm{Sing}"'] \arrow[u, "\mathrm{res}|_{\mathrm{bifibrant}}"] & \mathrm{ModelCat}^{\mathrm{SSet}} \arrow[u, "\mathrm{res}|_{\mathrm{bifibrant}}"] \arrow[ru, "\mathcal{N}^{\mathrm{model}}"'] &                  
\end{tikzcd}
\end{center}

We recall some definitions relevant to Weiss cosheaves and the quasicategories of manifolds studied in \cite{ayala_francis_2015}. An introduction to the more $\infty$-categorical aspects of Weiss cosheaves can be found in \cite{miller_ayala_francis_2020} and a reference for general use is \cite{brito_weiss_2013}. We make use of Definition \ref{dfn:framedemb}, which is the ``homotopically correct'' definition of framed embeddings.





    
\begin{dfn}
 The topological category $\mathscr{M}\mathrm{fld}_n^{\mathrm{fr}}$ has objects the tame, smooth $n$-manifolds with a choice of framing and the morphism space from $M$ to $N$ given by $\mathrm{Emb}^{\mathrm{fr}}(M,N)$.
\end{dfn}

Recall the singular set functor $\mathrm{Sing}:\mathrm{Top}\rightarrow \mathrm{SSet}$ is characterized as being adjoint to geometric realization. It is well known to takes values in Kan complexes and respect products, so it induces a functor $\mathrm{Sing}:\mathrm{Cat}^{\mathrm{Top}}\rightarrow \mathrm{Cat}^{\mathrm{Kan}}$. Let \[\mathcal{N}:\mathrm{Cat}^{\mathrm{Kan}} \rightarrow \mathrm{QuasiCat}\] denote the homotopy coherent nerve \cite[Section 1.1.5]{lurie_2009}.

\begin{dfn}
    The quasicategory $\mathscr{M}\mathrm{fld}_n^{*}$ is $\mathcal{N}(\mathrm{Sing}(\mathscr{M}\mathrm{fld}_n^{\mathrm{fr}}))$.
\end{dfn}

The notation of this quasicategory of manifolds is compatible with the notation of Ayala-Francis. By our careful choice of Definition \ref{dfn:framedemb}, $\mathcal{N}(\mathrm{Sing}(\mathscr{M}\mathrm{fld}_n^{\mathrm{fr}}))$ is a model of the category of $B$-framed manifolds $\mathscr{M}\mathrm{fld}_n^B$ when $B$ is a point \cite[Definition 2.17]{ayala_francis_2019}. Hence, the general theory of \cite{ayala_francis_2015} applies to the study of this quasicategory. We will  use vocabulary from \cite{ayala_francis_2015} regarding $\infty$-categories of manifolds, but this is confined to our short discussion of Weiss cosheaves. We will always be clear to distinguish the topological category of framed manifolds $\mathscr{M}\mathrm{fld}_n^\mathrm{fr}$ from the quasicategory of framed manifolds $\mathscr{M}\mathrm{fld}_n^\mathrm{*}$.

In order to construct well behaved functors out of the quasicategory $\mathscr{M}\mathrm{fld}_n^{*}=\mathcal{N}(\mathrm{Sing}(\mathscr{M}\mathrm{fld}_n^{*}))$ we work with the following paradigm: 

\begin{enumerate}
\item Use point-set topology to construct a continuous functor $F:\mathscr{M}\mathrm{fld}^{\mathrm{fr}}_n \rightarrow V$ of topological categories.
\item Take $\mathrm{Sing}(-)$ to arrive at a functor between Kan complex enriched categories.
\item Equip $\mathrm{Sing}(V)$ with the structure of a simplicial model category.
\item Postcompose $\mathrm{Sing}(F)$ with a simplicial bifibrant replacement functor to land in $\mathrm{Sing}(V)^\mathrm{bi}$.
\item Apply the homotopy coherent nerve $\mathcal{N}(-)$ to land in $\mathcal{N}(\mathrm{Sing}(V)^\mathrm{bi})$.
\end{enumerate}

There are two potential roadblocks: (1) the existence of a simplicial model structure and (2) the existence of an enriched bifibrant replacement for this model structure. It will turn out that these both exist in the case of $\mathrm{RMod}_O$ because it is a cofibrantly generated simplicial model category by Lemma \ref{lem:combinatorial}.

\begin{dfn}
    The homotopy coherent nerve $\mathcal{N}^{\mathrm{model}}(C)$ of a simplicial model category $C$ is the quasicategory $\mathcal{N}(C^{\mathrm{bi}})$.
\end{dfn}

\begin{dfn}
If $F: C \rightarrow V$ is a continuous functor of topologically enriched categories and $\mathrm{Sing}(V)$ is equipped with the structure of a simplicial model category with an enriched bifibrant replacement $\mathcal{B}$, then
\[\tilde{\mathcal{N}}(F): \mathcal{N}(\mathrm{Sing}(C)) \rightarrow \mathcal{N}^\mathrm{model}(\mathrm{Sing}(V))\]
is the composite $\mathcal{N}(\mathcal{B}\circ \mathrm{Sing}(F))$.

\end{dfn}
From now on we assume our simplicial model categories have a fixed enriched bifibrant replacement.
\begin{dfn}
    A Weiss cover $\mathcal{U}$ of $M \in \mathscr{M}\mathrm{fld}_n^*$ is an open cover of $M$ with the property that any finite subset $X$ of $M$ is contained in some $U \subset \mathcal{U}$.
\end{dfn}

A Weiss cover $\mathcal{U}$ can be interpreted as functor from the poset associated to $\mathcal{U}$ taking values in strictly framed embeddings: \[\mathrm{P}(\mathcal{U}) \rightarrow \mathscr{M}\mathrm{fld}_n^{\mathrm{fr}}.\] 

\begin{dfn}\label{dfn:cosheaf}
A topological Weiss cosheaf on $\mathscr{M}\mathrm{fld}_n^{\mathrm{fr}}$ valued in a topological model category $V$ is a continuous functor $F:\mathscr{M}\mathrm{fld}_n^{\mathrm{fr}} \rightarrow V$ which is a homotopy cosheaf with respect to Weiss covers, meaning for every Weiss cover $\mathcal{U}$ of $M \in \mathscr{M}\mathrm{fld}_n^{\mathrm{fr}}$:

\[\mathrm{hocolim}(P(\mathcal{U})  \rightarrow \mathscr{M}\mathrm{fld}_n^{\mathrm{fr}} \xrightarrow{F} V)\xrightarrow{\simeq} F(M)\]

\end{dfn}

The Weiss cosheaf condition allows one to study cosheaves on categories of manifolds by restricting attention to the subcategory of manifolds diffeomorphic to a disjoint union of disks. It turns out in the case of $\mathscr{M}\mathrm{fld}^\mathrm{fr}_n$, it is possible to restrict to a poset category.

\begin{dfn}
    For a smooth $n$-manifold $M$ the poset $\mathrm{Disk}(M)$ has objects the open subsets of $M$ diffeomorphic to $\bigsqcup_{i \in I} \mathbb{R}^n$, where $I$ is a finite set, and morphisms given by inclusion.
\end{dfn}

We now classify Weiss cosheaves on the category of framed manifolds in terms of their behavior on $\mathbb{R}^n$. A monoidal version of this statement appears in \cite[Theorem 5.4.5.9]{lurieHA}, and all the ideas from our proof are found in \cite{ayala_francis_2015,miller_ayala_francis_2020}. Recall that a quasicategory is presentable \cite[Definition 5.5.0.18]{lurie_2009} if it admits colimits and is accessible, meaning it is generated under ``small'' filtered colimits by a ``small'' category of ``small'' objects. In practice, most naturally occurring quasicategories are presentable.

\begin{prp}\label{prp:cosheaf}
    Suppose  $F,G$ are topological Weiss cosheaves on $\mathscr{M}\mathrm{fld}^{\mathrm{fr}}_n$ with values in a $V$ such that $\mathcal{N}^\mathrm{model}(V)$ is presentable, then there is an equivalence of functors $\tilde {\mathcal{N}} (F) \simeq \tilde {\mathcal{N}}(G)$, if and only if there is an equivalence of functors \[\tilde{\mathcal{N}}(F)|_{\mathcal{N}(\mathrm{Disk}(\mathbb{R}^n))} \simeq \tilde{\mathcal{N}}(G)|_{\mathcal{N}(\mathrm{Disk}(\mathbb{R}^n))}. \]
\end{prp}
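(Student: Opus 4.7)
The forward direction is immediate, since restricting any natural equivalence of functors yields a natural equivalence of restrictions. For the backward direction, let $\mathscr{D}\mathrm{isk}^{*}_n$ denote the full subquasicategory of $\mathscr{M}\mathrm{fld}^{*}_n$ spanned by finite disjoint unions of framed disks. My plan is to produce the desired equivalence on $\mathscr{M}\mathrm{fld}^{*}_n$ in two steps: first extend the given equivalence from $\mathcal{N}(\mathrm{Disk}(\mathbb{R}^n))$ to $\mathscr{D}\mathrm{isk}^{*}_n$ via an equivalence of these quasicategories, and then extend from $\mathscr{D}\mathrm{isk}^{*}_n$ to $\mathscr{M}\mathrm{fld}^{*}_n$ using the Weiss cosheaf hypothesis as a left Kan extension.

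For the first step, I would verify that the natural map $\mathcal{N}(\mathrm{Disk}(\mathbb{R}^n)) \hookrightarrow \mathscr{D}\mathrm{isk}^{*}_n$ is an equivalence of quasicategories. Essential surjectivity is clear, since any disjoint union of framed disks framed-embeds into $\mathbb{R}^n$. Full faithfulness on mapping spaces reduces, via evaluation of embeddings at the centers of the disk components, to the standard identification of the nerve of the subposet of $\mathrm{Disk}(\mathbb{R}^n)$ of disks containing a fixed $\bigsqcup_I \mathbb{R}^n$ with the framed embedding space $\mathrm{Emb}^{\mathrm{fr}}(\bigsqcup_I \mathbb{R}^n, \mathbb{R}^n)$; this is essentially the content of standard isotopy-extension arguments as in \cite{ayala_francis_2015}. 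Given this equivalence of quasicategories, restriction induces a categorical equivalence of functor quasicategories, and the given equivalence on $\mathcal{N}(\mathrm{Disk}(\mathbb{R}^n))$ lifts uniquely (up to contractible choice) to one on $\mathscr{D}\mathrm{isk}^{*}_n$.

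For the second step, the Weiss cosheaf hypothesis applied to the canonical Weiss cover $\mathrm{Disk}(M)$ of any framed manifold $M$ gives
\[\tilde{\mathcal{N}}(F)(M) \simeq \underset{U \in \mathcal{N}(\mathrm{Disk}(M))}{\mathrm{hocolim}}\, \tilde{\mathcal{N}}(F)(U),\]
and analogously for $G$. A cofinality argument will identify $\mathcal{N}(\mathrm{Disk}(M))$ as a final subcategory of the slice $(\mathscr{D}\mathrm{isk}^{*}_n)_{/M}$: for each framed embedding $V \hookrightarrow M$ with $V \in \mathscr{D}\mathrm{isk}^{*}_n$, the slice of $\mathrm{Disk}(M)$ over it is contractible because one can take tubular neighborhoods of the image of $V$. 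Consequently both $\tilde{\mathcal{N}}(F)$ and $\tilde{\mathcal{N}}(G)$ are left Kan extensions of their restrictions to $\mathscr{D}\mathrm{isk}^{*}_n$, so the equivalence on $\mathscr{D}\mathrm{isk}^{*}_n$ produced in the first step extends to one on all of $\mathscr{M}\mathrm{fld}^{*}_n$. The hard part will be carrying out this cofinality argument and the ensuing left Kan extension rigorously in the model-categorical setting -- in particular, ensuring compatibility with the bifibrant replacement $\mathcal{B}$ and with the homotopy coherent nerve $\mathcal{N}^{\mathrm{model}}$ -- but once this is in place the remainder is a standard application of Joyal-style cofinality.
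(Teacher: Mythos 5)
Your overall strategy — reduce to disjoint unions of disks via the Weiss cosheaf condition, then identify the disk category with something built from the poset $\mathrm{Disk}(\mathbb{R}^n)$ — is the same as the paper's, which computes $\tilde{\mathcal{N}}(F)(M)$ as a colimit over $\mathcal{N}(\mathrm{Disk}(M))$ and observes that this diagram factors through $\mathscr{D}\mathrm{isk}^{*,\mathrm{full}}_n \simeq \mathcal{N}(\mathrm{Disk}(\mathbb{R}^n))[W^{-1}]$. However, your first step contains a genuine error: the functor $\mathcal{N}(\mathrm{Disk}(\mathbb{R}^n)) \to \mathscr{D}\mathrm{isk}^{*}_n$ is \emph{not} an equivalence of quasicategories, and in particular is not fully faithful. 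The source is the nerve of a poset, so its mapping spaces are empty or contractible, whereas the mapping space in $\mathscr{D}\mathrm{isk}^{*}_n$ from $\bigsqcup_I \mathbb{R}^n$ to $\bigsqcup_J \mathbb{R}^n$ is $\mathrm{Emb}^{\mathrm{fr}}(\bigsqcup_I \mathbb{R}^n, \bigsqcup_J \mathbb{R}^n)$, which has the homotopy type of a labeled configuration space and is neither empty nor contractible in general. The "evaluation at centers" identification you invoke is the statement that the classifying space of the comma poset of disks containing a fixed $\bigsqcup_I \mathbb{R}^n$ recovers the embedding space; that is an ingredient in showing the functor is a \emph{localization}, not that it is fully faithful.

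The fix is exactly what the paper does: by \cite[Proposition 2.19]{ayala_francis_2015}, the functor from the poset exhibits $\mathscr{D}\mathrm{isk}^{*,\mathrm{full}}_n$ as the localization $\mathcal{N}(\mathrm{Disk}(\mathbb{R}^n))[W^{-1}]$ at the isotopy equivalences $W$. Restriction along a localization is fully faithful onto the functors inverting $W$, and both $\tilde{\mathcal{N}}(F)$ and $\tilde{\mathcal{N}}(G)$ invert isotopy equivalences because $F$ and $G$ are continuous; so the given equivalence over the poset does lift to $\mathscr{D}\mathrm{isk}^{*}_n$, just not for the reason you state. With that repaired, your second step (cofinality of $\mathcal{N}(\mathrm{Disk}(M))$ in $(\mathscr{D}\mathrm{isk}^{*}_n)_{/M}$ and left Kan extension) is a standard repackaging of the paper's argument, which instead notes directly that the colimit diagram over $\mathcal{N}(\mathrm{Disk}(M))$ computing $\tilde{\mathcal{N}}(F)(M)$ factors through $\mathscr{D}\mathrm{isk}^{*,\mathrm{full}}_n$ and hence depends only on the restriction to $\mathcal{N}(\mathrm{Disk}(\mathbb{R}^n))$; both versions lean on the same results of Ayala--Francis and Miller--Ayala--Francis, and the compatibility with $\mathcal{B}$ and $\mathcal{N}^{\mathrm{model}}$ that you defer is handled in the paper by \cite[Theorem 4.2.4.1]{lurie_2009}.
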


\begin{proof}
By \cite[Theorem 4.2.4.1]{lurie_2009} homotopy colimits in $V$ and colimits in $\mathcal{N}^{\mathrm{model}}(V)$ agree. So following \cite[Proof of Proposition 2.22]{miller_ayala_francis_2020} the Weiss condition allows us to compute the value $\tilde{\mathcal{N}}(F)(M)$ as 
\[\mathrm{colim}(\mathcal{N}(\mathrm{Disk}(M)) \rightarrow \mathscr{M}\mathrm{fld}^{*}_n \xrightarrow{F} \mathcal{N}^{\mathrm{model}}(V)).\]


If $W$ denotes the subcategories of isotopy equivalences in any of these quasicategories, then the above composite factors through the localization $\mathcal{N}(\mathrm{Disk}(M))[W^{-1}]$ \cite[Definition 1.3.4.1]{lurieHA} since the continuity of $F$ ensures that isotopy equivalences are inverted. We have the following commutative diagram where all maps are induced by inclusion or forgetting:
\begin{center}
\begin{tikzcd}
{\mathcal{N}(\mathrm{Disk}(M))[W^{-1}]} \arrow[d] \arrow[rr, "\tilde{\mathcal{N}}(F)"] &                                                          & \mathcal{N}^\mathrm{model}(V)                                                                   \\
{\mathrm{Disk}^*_{n/M}[W^{-1}]} \arrow[r]                                              & {\mathscr{D}\mathrm{isk}^{*,\mathrm{lluf}}_n} \arrow[ru] & {\mathcal{N}(\mathrm{Disk}(\mathbb{R}^n))[W^{-1}]} \arrow[l, "\simeq"'] \arrow[u, "\tilde{\mathcal{N}}(F)"']
\end{tikzcd}
\end{center}
The category $\mathscr{D}\mathrm{isk}^{*,\mathrm{lluf}}_n$, ``lluf'' being the reverse of ``full'', is defined as the closure of $\mathscr{D}\mathrm{isk}^*_n$ under equivalence, or in other words, the subcategory of framed manifolds diffeomorphic to a disjoint union of disks. Since the path following the bottom row depends only on $\tilde{\mathcal{N}}(F)$, it suffices to demonstrate the equivalence claimed in the diagram. This is \cite[Proposition 2.19]{ayala_francis_2015}.

\end{proof}

In practice, it is much easier to define functors on the subcategory of $\mathscr{M}\mathrm{fld}_n^{\mathrm{fr}}$ of strictly framed embeddings. For instance, the right modules $\mathcal{F}_M$ are functorial on this subcategory, but not the category $\mathscr{M}\mathrm{fld}_n^{\mathrm{fr}}$.

\begin{dfn}
    The category $\mathrm{Mfld}^\mathrm{strict}_n$ is the discrete category with objects framed $n$-manifolds and morphisms given by open embeddings which preserve the framing.
\end{dfn}

The ``downside'' of this category is that there are far fewer morphisms. In particular, if $B^n$ denotes the open unit ball with its standard framing and $|I| \geq 2$, then there are no strictly framed embeddings $\bigsqcup_I B^n \rightarrow B^n$. This is because a framed embedding is automatically an isometric embedding.

\begin{dfn}
    If $C \leq D$ is a faithful map of topological categories, then a homotopy extension of a continuous functor $F: C \rightarrow V$ is a continuous functor $\bar{F}: D \rightarrow V$ such that $\bar{F}|_C$ is connected by a zigzag of natural weak equivalences to $F$.
\end{dfn}

Recall that a functor $F:C_1 \rightarrow C_2$ between model categories ``creates homotopy colimits'' if a diagram $D$ in $C_1$ is a homotopy colimit diagram, if and only if $F(D)$ is a homotopy colimit diagram in $C_2$. Similarly, $F$ ``creates weak equivalences'' if a morphism $f$ in $C_1$ is a weak equivalence, if and only if $F(f)$ is a weak equivalence in $C_2$.

\begin{lem}\label{lem:liftingweiss}
    Let $T:V \rightarrow U$ be a continuous functor of topological model category such that $\mathcal{N}^\mathrm{model}(V),\mathcal{N}^\mathrm{model}(U)$ are presentable. Suppose $T$ creates homotopy colimits and weak equivalences. If $F,G: \mathrm{Mfld}^{\mathrm{strict}}_n \rightarrow V$ are functors such that $T \circ F,T \circ G$ admit homotopy extensions to $\mathscr{M}\mathrm{fld}_n^{\mathrm{fr}}$  which are Weiss cosheaves, then there is an equivalence \[\tilde {\mathcal{N}} (F) \simeq \tilde {\mathcal{N}}(G),\] if and only if, there is an equivalence of functors \[\tilde{\mathcal{N}}(F)|_{\mathcal{N}(\mathrm{Disk}(\mathbb{R}^n))} \simeq \tilde{\mathcal{N}}(G)|_{\mathcal{N}(\mathrm{Disk}(\mathbb{R}^n))}. \]

\end{lem}

\begin{proof}
    Since $T \circ F,T \circ G$ extend up to homotopy to $\mathscr{M}\mathrm{fld}_n^{\mathrm{fr}}$, we conclude that both $\mathcal{N}^\mathrm{model}(F),\mathcal{N}^\mathrm{model}(G)$ invert isotopy equivalences and so factor through $\mathrm{Mfld}^{\mathrm{strict}}_n[W^{-1}]$. In particular, their restrictions to $\mathcal{N}(\mathrm{Disk}(\mathbb{R}^n))$ factor through $\mathcal{N}(\mathrm{Disk}(\mathbb{R}^n))[W^{-1}]$ and so give rise to presheaves $\hat{F}\simeq \hat{G}$ on $\mathcal{D}\mathrm{isk}_n^*$ via the equivalence $\mathcal{D}\mathrm{isk}_n^* \simeq \mathcal{N}(\mathrm{Disk}(\mathbb{R}^n))[W^{-1}]$.

    We may left Kan extend $\hat{F},\hat{G}$ along the inclusion $i:\mathcal{D}\mathrm{isk}_n^* \rightarrow \mathscr{M}\mathrm{fld}_n^*$ to get comparisons
    \[F \leftarrow (i_!(\hat{F}))|_{\mathcal{N}(\mathrm{Mfld}^{\mathrm{strict}}_n)} \simeq (i_!(\hat{G}))|_{\mathcal{N}(\mathrm{Mfld}^{\mathrm{strict}}_n)} \rightarrow G.\]

Since $T$ preserves colimits and creates weak equivalences, $\tilde{\mathcal{N}}(T)$ will commute with left Kan extensions. Applying Proposition \ref{prp:cosheaf} to the coherent nerves of the homotopy extensions of $T \circ F,T \circ G$ will imply that the outer maps are also equivalences.
\end{proof}

We now supply a simplicial model structure to $\mathrm{RMod}_O$ and verify $\mathrm{Sing}(\mathrm{RMod}_O)$ is a simplicial model category with enriched bifibrant replacement. Following observations of Arone-Ching \cite[Appendix A]{arone_ching_2011}, we may model the category of right modules over the spectral operad $O$ as enriched presheaves on the $(\mathrm{Sp},\wedge)$-enriched category $\mathrm{Operator}(O)$ associated to $O$.\footnote{Recall the enriched category $\mathrm{Operator}(O)$ associated to the operad $O$ has objects given by finite sets and $\mathrm{Hom}(I,*)=O(I)$, with the rest of the morphisms obtained in a combinatorial manner. It is also called the PROP associated to $O$.}



 \begin{lem} \label{lem:combinatorial}
     The category $\mathrm{RMod}_O$ for $O \in \mathrm{Operad}(\mathrm{Sp},\wedge)$ admits a simplicial model structure which is cofibrantly generated and has weak equivalences and homotopy colimits computed objectwise. The simplicial mapping objects are $\mathrm{Sing}(\mathrm{Map}(-,-))$ where $\mathrm{Map}(-,-)$ denotes the $\mathrm{Top}$ enrichment inherited from the $\mathrm{Top}$ enrichment of orthogonal spectra. As a consequence, the category admits an enriched bifibrant replacement functor. The quasicategory $\mathcal{N}^\mathrm{model}(\mathrm{RMod}_O)$ is presentable.
 \end{lem}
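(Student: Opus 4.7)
The strategy will be to transfer the combinatorial simplicial model structure on orthogonal spectra to $\mathrm{RMod}_O$ through the Arone--Ching identification with enriched presheaves on $\mathrm{Operator}(O)$. First I would recall that orthogonal spectra with the (positive) stable model structure form a combinatorial model category: local presentability is inherited from pointed simplicial sets (after applying $\mathrm{Sing}$), and cofibrant generation is due to Mandell--May. Then, following Arone--Ching, I would identify $\mathrm{RMod}_O$ with the category of $(\mathrm{Sp},\wedge)$-enriched presheaves on $\mathrm{Operator}(O)$, which is a small enriched category.

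Next I would equip this presheaf category with its projective model structure, whose generating (acyclic) cofibrations are obtained by twisting those of $\mathrm{Sp}$ by the representables $\mathrm{Hom}_{\mathrm{Operator}(O)}(-,I)$, and whose weak equivalences and fibrations are defined levelwise. Both cofibrant generation and local presentability pass to projective enriched presheaves over combinatorial targets (this is standard; for instance HTT A.3.3.2 or its enriched refinements), so $\mathrm{RMod}_O$ is combinatorial as claimed.

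For the simplicial enrichment, the topological enrichment of $\mathrm{Sp}$ induces a topological enrichment of $\mathrm{RMod}_O$ through an equalizer formula for symmetric sequence maps; applying $\mathrm{Sing}$ levelwise yields precisely the simplicial mapping objects $\mathrm{Sing}(\mathrm{Map}(-,-))$. To verify SM7 for this simplicial enrichment, I would transfer the topological version via the Quillen adjunction $|-| \dashv \mathrm{Sing}$: given a cofibration $i$ and fibration $p$ in $\mathrm{RMod}_O$ together with an inclusion $K \hookrightarrow L$ of simplicial sets, the relevant lifting property for $i \square (K \hookrightarrow L)$ in $\mathrm{SSet}$ is adjoint to the topological pushout-product lifting property for $i \square (|K| \hookrightarrow |L|)$, which holds because $\mathrm{Sp}$ (and hence any levelwise projective enriched presheaf category on it) is a topological model category.

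Finally, existence of an enriched bifibrant replacement functor follows formally from combinatoriality together with the simplicial structure: one may either invoke HTT A.3.3.2 for functorial enriched factorizations, or run the small object argument directly against the generating (acyclic) cofibrations, which can be chosen to be enriched. The only genuinely technical point I anticipate is the careful verification of SM7 for the transferred simplicial enrichment, but since all structure is defined levelwise and the topological pushout-product axiom is known for orthogonal spectra, this is essentially formal.
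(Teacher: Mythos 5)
Your proposal is correct and takes essentially the same route as the paper: the paper obtains the cofibrantly generated model structure by citing Fresse's transfer result for modules over an operad (equivalently, the projective structure on enriched presheaves on $\mathrm{Operator}(O)$, using that positive orthogonal spectra are cofibrantly generated), gets the simplicial enrichment formally from the topological one via $\mathrm{Sing}$ exactly as in Arone--Ching's appendix, and deduces the enriched bifibrant replacement from combinatoriality plus the enrichment. The one soft spot in your write-up is the claim that local presentability is ``inherited from pointed simplicial sets after applying $\mathrm{Sing}$'' --- applying $\mathrm{Sing}$ to mapping objects does not change the underlying ($\mathrm{Top}$-based) category, and the paper instead argues via accessibility of the diagram category together with objectwise colimits.
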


 \begin{proof}
      The existence of a cofibrantly generated model structure is a consequence of \cite[Proposition 14.1.A]{fresse_2009} since the category of orthogonal spectra with the positive model structure is cofibrantly generated \cite[Theorem 14.1]{mandell_may_schwede_shipley_2001}. The simplicial enrichment follows formally as it does in the case of S-modules \cite[Proposition A.1]{arone_ching_2011} where the characterization of homotopy colimits can also be found. From this, one can deduce the existence of an enriched bifibrant replacement functor \cite[Theorem 13.5.2]{riehl_2014}. The nerve is presentable since cofibrant generation implies a Quillen equivalence with a combinatorial model category by \cite{rosicky}.\footnote{This assumes a large cardinal axiom called ``Vopënka’s principle'', likely one can avoid this by comparing this category of right modules to a category of right modules based in a combinatorial model category of spectra.} The simplicial localizations of combinatorial model categories are presentable \cite{pavlov2022combinatorial} 
 \end{proof}


Note that if $O$ is the trivial operad, this yields a model structure on $\Sigma \mathrm{Seq}(\mathrm{Sp})$ with the stated properties.


\begin{prp}\label{prp:firstexcise}
   The functor \[\Sigma^\infty_+ \mathcal{F}_{(-)}:\mathrm{Mfld}^{\mathrm{strict}}_n \rightarrow \Sigma\mathrm{Seq}(\mathrm{Sp})\] has a homotopy extension to a topological Weiss cosheaf on $\mathscr{M}\mathrm{fld}^{\mathrm{fr}}_n$ given by
   \[M \mapsto \Sigma^\infty_+ F(M,-).\]
\end{prp}

\begin{proof}
    First, observe that the configuration spaces are functorial with respect to framed embeddings by forgetting all framing information and using the functoriality of configuration spaces with respect to embeddings. Configuration spaces were observed to be excisive with respect to Day convolution of symmetric sequences \cite[Lemma 2.5]{campos_ricardo_idrissi}. By \cite[Proposition 3.14]{miller_ayala_francis_2020}, this implies it satisfies the Weiss cosheaf condition. It is clearly a homotopy extension since configuration spaces include as the interior of a manifold with boundary into the Fulton-MacPherson compactifications.
\end{proof}

\begin{prp}\label{prp:secondexcise}
 
    The functor \[s_{(n,n)}K(\Sigma^\infty \mathcal{F}_{(-)^+}):\mathscr{M}\mathrm{fld}^{\mathrm{strict}}_n \rightarrow \Sigma\mathrm{Seq}(\mathrm{Sp})\] has a homotopy extension to a topological Weiss cosheaf on $\mathscr{M}\mathrm{fld}^{\mathrm{fr}}_n$ given by
   \[M \mapsto (\Sigma^\infty F(U,-)^+)^\vee.\]
\end{prp}

\begin{proof}

Observed in \cite[Section 7]{malin2}, there are equivalences of symmetric sequences natural with respect to inclusion 
    \[M \mapsto s_{(n,n)}\Sigma^\infty K(\Sigma^\infty \mathcal{F}_{M^+})\]
    \[\uparrow{\simeq}\]
    \[M \mapsto s_{(n,n)}(\Sigma^\infty F(M,-)^+)^\vee.\]
    given by the dual of collapsing the subspace of the $W$-construction consisting of trees with an internal edge. The one point compactifications of configuration spaces are contravariantly functorial with respect to framed embeddings since $F(M,-)$ is functorial (by forgetting any framing information) and an open embedding of manifolds determines an open embedding of configuration spaces.

To demonstrate that this extension is a Weiss cosheaf, observe there is pairing equivariant with respect to symmetric group actions and natural with respect to framed embeddings:
    \[\Sigma^\infty_+ E_M \wedge F(M,-)^+ \rightarrow S^{n|-|} \simeq S_{(n,n)}\]
    \[(f,(y_i)) \mapsto f^{-1}(y_i)\]
 which is a duality pairing by comparison to the duality pairing of \cite[Theorem 2.3]{malin}. The adjoint is thus an equivalence and natural with respect to framed embeddings:
    \[M \mapsto s_{(n,n)} (\Sigma^\infty F(M,-)^+)^\vee\]
    \[\uparrow{\simeq}\]
    \[M \mapsto \Sigma^\infty_+ E_M.\]
    
    This last functor is equivalent to $M \mapsto \Sigma^\infty_+ F(M,-)$ by passing to the origin of each disk. This functor was just shown to be a Weiss cosheaf, and the property of being a Weiss cosheaf is invariant under zigzags of natural equivalences.

\end{proof}

Let $u:\mathrm{RMod}_O \rightarrow \Sigma \mathrm{Seq}(\mathrm{Sp})$ denote the forgetful functor. Note that it creates weak equivalences and homotopy colimits.
\begin{dfn}\label{dfn:natural}
    Given a zigzag map of operads $(O_1,f_1,O_2,f_2,\dots, O_{i-1},f_{k-1},O_k)$ and a sequence of topologically enriched functors $F_i:C \rightarrow \mathrm{RMod}_{O_i}$, a zigzag natural transformation $(F_1,\alpha_1,F_2,\alpha_2,\dots, F_{k-1},\alpha_{k-1},F_k)$ of enriched functors is a zigzag of natural transformations $\alpha_i:u \circ F_i \leftrightarrow u\circ F_{i+1}$ which objectwise determines a zigzag map of right modules. 
    
    A zigzag natural transformation is a zigzag natural equivalence if all $\alpha_i$ are objectwise zigzag equivalences, and it is a zigzag of natural equivalences if all $\alpha_i$ are objectwise equivalences.
\end{dfn}

In order to most easily apply the theory of Weiss cosheaves, we need a process which converts zigzag equivalences of right modules over different operads into zigzag equivalences over a single operad. 

\begin{dfn}\label{dfn:pullback}
Suppose we have a zigzag map $f=(O_1,f_1,\dots,O_k)$ of operads in $(\mathrm{Sp},\wedge)$  such that all $O_i$ are levelwise cofibrant as spectra with respect to the positive model structure on orthogonal spectra. Then for a right module $R$ over $O_k$, we define the restriction $\mathrm{res}_f(R)$ by setting $i=k$ and iterating the process:

\begin{enumerate}
  \item If $f_{i-1}$ is in the direction $O_{i-1} \rightarrow O_{i}$, replace $R$ with $R':=\mathrm{res}_{f_{i-1}}(R)$, otherwise:
  \item It must be a weak equivalence in the direction $O_{i} \xrightarrow{\simeq} O_{i-1}$, and we replace $R$ with $R'$ defined as the derived induction\footnote{Using the cofibrancy assumption on $O$, it can be computed by the bar construction $B(\mathcal{D}(R),O_{i},O_{i-1})$ where $\mathcal{D}$ denotes a fixed enriched cofibrant replacement functor. \cite[Proposition 8.5]{arone_ching_2011}.} of $R$ along $O_{i} \xrightarrow{\simeq} O_{i-1}$.
  \item Repeat this process with $R'$ and the zigzag equivalence of operads truncated at $O_{i-1}$.
\end{enumerate}

The process terminates with a right module over $O_1$ and this is defined as $\mathrm{res}_{f}(R)$.
\end{dfn}

By construction there is a zigzag map of right modules from $\mathrm{res}_f(R)$ to $R$ which is a zigzag equivalence if $f$ is a zigzag equivalence of operads.

By \cite{kro_2007}, a model structure on operads in orthogonal spectra exists and cofibrant operads are levelwise cofibrant. Let us fix a cofibrant replacement functor \[C:\mathrm{Operad}(\mathrm{Sp},\wedge) \rightarrow \mathrm{Operad}(\mathrm{Sp},\wedge).\]

\begin{dfn}
   The functor $C:\mathrm{RMod}_O \rightarrow \mathrm{RMod}_{C(O)}$ is restriction along $C(O)\rightarrow O$.
\end{dfn}

Thus, for any zigzag equivalence of operads $f=(O_1,f_1,\dots,O_k)$ and right modules $f'=(R_1,f'_1,\dots,R_k)$ there is an equivalent zigzag equivalence of levelwise cofibrant operads $C(f)=(C(O_1),C(f_1),\dots,C(O_k))$ and a compatible zigzag of right modules $C(f')=(C(R_1),C(f'_1),\dots,C(R_k))$. We emphasize: $(C(R_1),C(f'_1),\dots,C(R_k))$ has the same underlying symmetric sequence as $R_i$, it is only considered as a right module over a different operad.

Observe that if we have a zigzag natural equivalence of functors \[( D\rightarrow \mathrm{RMod}_{O_i},\alpha_i)\] compatible with a zigzag equivalence of operads $(O_1,f_1,\dots,O_k)$, we may use the above construction to pull back to a zigzag natural equivalence of functors taking values in $\mathrm{RMod}_{C(O_1)}$. Recall that $u:\mathrm{RMod}_O \rightarrow \Sigma \mathrm{Seq}(\mathrm{Sp})$ is the forgetful functor.

\begin{lem} \label{lem:pullback}
    Given a zigzag equivalence $f: (O_1,f_1,\dots,O_k)$ of operads in $(\mathrm{Sp},\wedge)$ and a continuous functor $F:\mathscr{M}\mathrm{fld}_n^{\mathrm{strict}} \rightarrow \mathrm{RMod}_{O_k}$, then $u \circ C \circ F=u \circ F$ admits a homotopy extension to $F':\mathscr{M}\mathrm{fld}_n^{\mathrm{fr}} \rightarrow \Sigma\mathrm{Seq}(\mathrm{Sp})$, if and only if 
    $u \circ \mathrm{res}_{C(f)} (C \circ F)$ also admits a homotopy extension to $F'$.

\end{lem}
\begin{proof}
By induction, it suffices to show the result holds for zigzags of length one. In other words, for restriction along a map of operads $N \rightarrow O$ and derived induction along an equivalence $O \xrightarrow{\simeq} P$. The first is automatic because restriction does not change the underlying homotopy type of the symmetric sequence. The second follows from the fact that derived induction of a levelwise cofibrant right module along a weak equivalence of cofibrant operads does not change the weak homotopy type of the underlying symmetric sequence \cite[Proposition 8.5]{arone_ching_2011}.
\end{proof}

\begin{lem}\label{lem:weissmodules}
Suppose that for $i=0,1$ $F_i: \mathrm{Mfld}^{\mathrm{strict}}_n \rightarrow \mathrm{RMod}_{O_i}$ are functors such that $u \circ F_i$ admit homotopy extensions to $\mathscr{M}\mathrm{fld}_n^{\mathrm{fr}}$  which are Weiss cosheaves. If there exists a zigzag natural equivalence $f$ from $F_1|_{\mathrm{Disk}(\mathbb{R}^n)}$  to $F_2|_{\mathrm{Disk}(\mathbb{R}^n)}$, then for any framed manifold $M$ 
 there is a zigzag equivalence of right modules from $F_1(M)$ to $F_2(M)$.
\end{lem}

\begin{proof}This theorem is implied by the following stronger statement which also encodes homotopy coherent naturality with respect to strictly framed embeddings:

``The functors $\tilde{\mathcal{N}}(C \circ F_1)$ and $\tilde{\mathcal{N}}(\mathrm{res}_{C(f)}(C \circ F_2))$ are equivalent in the quasicategory \[\mathrm{Fun}(\mathcal{N}(\mathrm{Mfld}^{\mathrm{strict}}_n), \mathcal{N}^{\mathrm{model}}(\mathrm{RMod}_{C(O_1)})).''\]




To see this, first observe that by Definition \ref{dfn:pullback}, the functor $\mathrm{res}_{C(f)}(C \circ F_2) $ is related to $F_2$ by a zigzag natural equivalence (Definition \ref{dfn:natural}), and similarly for $F_1$ and $C \circ F_1$. Now recall that an equivalence in the quasicategory of functors is an objectwise equivalence \cite[Chapter 5, Theorem C]{joyal_2008}, and an equivalence of objects in $\mathcal{N}^\mathrm{model}(-)$ of a simplicial model category implies the existence of a weak equivalence in the simplicial model category of the bifibrant objects the vertices represent, and so we've shown the implication.\footnote{If in Lemma \ref{lem:combinatorial} we established that $\mathrm{Sing}(\mathrm{RMod}_O)$ was in fact simplicially equivalent to a combinatorial simplicial model category, we would get the stronger statement that these zigzags of objects are actually natural.}

 We now demonstrate the quasicategorical statement. Applying $C(-)$ to the zigzag natural equivalence in the statement of the theorem and restricting along the operad zigzag equivalence $C(f)$, yields a zigzag natural equivalence \[ C  \circ F_1 |_{\mathrm{Disk}(\mathbb{R}^n)}\quad \mathrm{to} \quad  \mathrm{res}_{C(f)}(C \circ F_2)|_{\mathrm{Disk}(\mathbb{R}^n)}  \]

Applying $\tilde{\mathcal{N}}$, we now have a zigzag of transformations between functors of quasicategories for which the backwards maps are equivalences objectwise. As before, the backwards natural transformations are in fact equivalences of functors, and thus can be inverted. Inverting the backwards arrows and composing yields a natural transformation
 \[\tilde{\mathcal{N}}(C \circ F_1 )|_{\mathcal{N}(\mathrm{Disk}(\mathbb{R}^n))}  \Rightarrow  \tilde{\mathcal{N}}(\mathrm{res}_{C(f)}(C \circ F_2))|_{\mathcal{N}(\mathrm{Disk}(\mathbb{R}^n))} . \]

By the definition of a zigzag equivalence (Definition \ref{dfn:zigzag}), this is an objectwise equivalence, and thus an equivalence of functors. As remarked earlier, the forgetful functor $u: \mathrm{RMod}_O \rightarrow \Sigma\mathrm{Seq}(\mathrm{Sp})$ creates homotopy colimits and weak equivalences, so since Lemma \ref{lem:pullback} shows these functors still have homotopy extensions, we can apply Lemma \ref{lem:liftingweiss} to conclude that the functors $\tilde{\mathcal{N}}(C \circ F_1)$ and $\tilde{\mathcal{N}}(\mathrm{res}_{C(f)}(C \circ F_2))$ are equivalent in the quasicategory \[\mathrm{Fun}(\mathcal{N}(\mathrm{Mfld}^{\mathrm{strict}}_n), \mathcal{N}^{\mathrm{model}}(\mathrm{RMod}_{C(O_1)})).\]

\end{proof}

\section{Self duality of $E_M$, Poincaré-Koszul duality, and embedding calculus}\label{sec:results}

In this section, we combine the results of the previous two sections to prove that the right modules $\mathcal{F}_M$ have compactly supported Koszul self duality, and that this is natural with respect to framed embeddings. We will discuss some applications including: a resolution of Ching's conjecture, a lift of the Pontryagin-Thom collapse map to stable, framed embedding calculus, and Poincaré-Koszul duality for left $\Sigma^\infty_+ \mathcal{F}_n$-modules.

\begin{thm}[Koszul self duality of $\mathcal{F}_M$] \label{thm:selfduality}
There is a zigzag of equivalences of operads
\[\Sigma^\infty_+ \mathcal{F}_n \simeq  \dots \simeq s_n K(\Sigma^\infty_+ \mathcal{F}_n)\]
and a compatible zigzag of equivalences of right modules
\[\Sigma^\infty_+ \mathcal{F}_M \simeq \dots \simeq s_{(n,n)} K(\Sigma^\infty \mathcal{F}_{M^+})\]

\end{thm}

\begin{proof}
 The restrictions of the functors \[ \Sigma^\infty_+ \mathcal{F}_{(-)}:\mathrm{Mfld}^{\mathrm{strict}}_n \rightarrow \mathrm{RMod}_{\Sigma^\infty_+ \mathcal{F}_n}\]
\[M \mapsto \Sigma^\infty_+ \mathcal{F}_M,\]
\[ s_{(n,n)}K(\Sigma^\infty \mathcal{F}_{(-)^+}):\mathrm{Mfld}^{\mathrm{strict}}_n \rightarrow \mathrm{RMod}_{s_{n}K(\Sigma^\infty_+ \mathcal{F}_n)}\]
\[M \mapsto s_{(n,n)}K(\Sigma^\infty \mathcal{F}_{M^+})\]
to $\mathrm{Disk}(\mathbb{R}^n)$ are connected by a zigzag natural equivalence by Lemma \ref{lem:codimension0}. After composition with $u$, the two functors admits topological Weiss cosheaf homotopy extensions by Lemma \ref{prp:firstexcise} and Lemma $\ref{prp:secondexcise}$. Thus we can apply, Lemma \ref{lem:weissmodules} and conclude the result.
    
\end{proof}

The self duality of $\mathcal{F}_M$ has a multitude of consequences which we begin to investigate. Recall that we can construct a zigzag map of operads $\mathrm{lie} \rightarrow s_{-n}\Sigma^\infty_+ \mathcal{F}_n$ by taking the Koszul dual of the standard map $\Sigma^\infty_+\mathcal{F}_n \rightarrow \mathrm{com}$ and appealing to the Koszul self duality of $\mathcal{F}_n$. We denote the pullback along this zigzag by $\mathrm{res}_{\mathrm{lie}}$. Recall from Section \ref{sec:conjecture}, the derivatives $\partial_* F$ of a functor $F:\mathrm{Top}_* \rightarrow \mathrm{Sp}$ form a right module of the $\mathrm{lie}$ operad.

\begin{cor}[Ching's conjecture]\label{cor:chingconjecture}
    If $M$ is a framed $n$-manifold, there is a zigzag of equivalences of right $ \mathrm{lie}$-modules \[\mathrm{res}_{\mathrm{lie}}(s_{(-n,-n)}\Sigma^\infty_+ \mathcal{F}_M) \simeq \dots \simeq \partial_*(\Sigma^\infty \mathrm{Map}_*(M^+,-)).\]
\end{cor}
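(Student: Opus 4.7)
The plan is to chain together three equivalences. First, by the Arone--Ching calculation \cite[Example 17.28]{arone_ching_2011}, there is a zigzag equivalence of $\mathrm{lie}$ modules
\[ \partial(\Sigma^\infty \operatorname{Map}_*(M^+,-)) \simeq K(\Sigma^\infty (M^+)^\wedge),\]
where $(M^+)^\wedge$ denotes the $\mathrm{com}$ module with $(M^+)^\wedge(I) = (M^+)^{\wedge I}$ and partial composites determined by the diagonal. Second, Theorem \ref{thm:selfduality} yields a zigzag of equivalences $s_{(-n,-n)}\Sigma^\infty_+ E_M \simeq K(\Sigma^\infty E_{M^+})$, compatible with the Koszul self duality zigzag of $\Sigma^\infty_+ E_n$. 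Restricting along the composite zigzag $\mathrm{lie} = K(\mathrm{com}) \to K(\Sigma^\infty_+ E_n) \simeq s_{-n}\Sigma^\infty_+ E_n$ that defines $\mathrm{res}_{\mathrm{lie}}$ (see Section \ref{sec:conjecture}) produces an equivalence of $\mathrm{lie}$ modules
\[ \mathrm{res}_{\mathrm{lie}}(s_{(-n,-n)}\Sigma^\infty_+ E_M) \simeq \mathrm{res}_{\mathrm{lie}}(K(\Sigma^\infty E_{M^+})).\]

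Third, to bridge the two descriptions, we observe that the evaluation map $E_{M^+} \to (M^+)^\wedge$, sending a configuration of framed embedded disks to the $I$-tuple of disk origins, is a map of modules covering the operad map $\Sigma^\infty_+ E_n \to \mathrm{com}$. Applying Koszul duality produces a map of modules
\[ K(\Sigma^\infty (M^+)^\wedge) \to K(\Sigma^\infty E_{M^+}) \]
over $\mathrm{lie} = K(\mathrm{com}) \to K(\Sigma^\infty_+ E_n)$, and in particular a $\mathrm{lie}$ module map $K(\Sigma^\infty (M^+)^\wedge) \to \mathrm{res}_{\mathrm{lie}}(K(\Sigma^\infty E_{M^+}))$. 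Concatenating this with the first two equivalences yields the desired zigzag, provided we verify that this last map is an equivalence of underlying symmetric sequences.

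The main technical content lies in that verification. As noted in Section \ref{sec:conjecture}, $K(\Sigma^\infty X^\wedge)$ has underlying symmetric sequence $(X^\wedge/\Delta^{\mathrm{fat}})^\vee$; specializing to $X = M^+$ gives $B(\Sigma^\infty (M^+)^\wedge)(I) \simeq F(M,I)^+$. On the other hand, as exploited in the proof of Proposition \ref{prp:secondexcise}, there are equivalences $B(\Sigma^\infty E_{M^+}) \simeq B(\Sigma^\infty \mathcal{F}_{M^+}) \simeq F(M,-)^+$ coming from Salvatore's identification of $W(E_{M^+})$ with $\mathcal{F}_{M^+}$ together with the computation $\mathcal{F}_{\bar M}/\widehat\partial\mathcal{F}_{\bar M}\cong F(M,-)^+$ of \cite{FTW}. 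The map $E_{M^+}(I) \to (M^+)^{\wedge I}$ factors through the homotopy equivalence $E_{M^+}(I) \simeq F(M,I)^+$ (contracting disks to their origins) followed by the open inclusion $F(M,I)^+ \hookrightarrow (M^+)^{\wedge I}$, and the quotient by the fat diagonal restricts to the identity on $F(M,I)^+$, so the induced map of bar constructions is an equivalence; Spanier--Whitehead dualizing yields the required equivalence.

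The expected obstacle is keeping all module structures honest through the zigzag $\mathrm{lie} \to K(\Sigma^\infty_+ E_n) \simeq s_{-n}\Sigma^\infty_+ E_n$, since $\mathrm{res}_{\mathrm{lie}}$ is defined via the iterative procedure of Definition \ref{dfn:pullback} involving derived induction along each backwards weak equivalence. The key point is that Theorem \ref{thm:selfduality} produces its zigzag of module equivalences in a manner that is already compatible with the Koszul zigzag of operads, so the restriction functor transports the equivalence $s_{(-n,-n)}\Sigma^\infty_+ E_M \simeq K(\Sigma^\infty E_{M^+})$ to a $\mathrm{lie}$ module equivalence without requiring any additional coherence beyond what is already packaged into the statement of self duality.
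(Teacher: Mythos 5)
Your proposal follows essentially the same route as the paper's proof: identify $\partial_*(\Sigma^\infty\mathrm{Map}_*(M^+,-))$ with $K(\Sigma^\infty (M^+)^\wedge)$ via Arone--Ching, compare $K(\Sigma^\infty (M^+)^\wedge)$ with $K(\Sigma^\infty E_{M^+})$ by showing both bar constructions compute $F(M,-)^+$ (using the Fulton--MacPherson identification and \cite[Proposition 2.5]{FTW}), and then invoke Theorem \ref{thm:selfduality} together with restriction along the Koszul zigzag. One point-set caveat: the evaluation map $E_{M^+}\to (M^+)^\wedge$ recording disk origins is \emph{not} strictly a module map covering $E_n\to\mathrm{com}$, since partial composition moves the origins of the inserted disks away from the origin of the ambient disk while the $\mathrm{com}$ composite is the diagonal; this is precisely why the paper constructs the comparison as a map of comodules $B(\mathcal{F}_{M^+},(\mathcal{F}_n)_+,1)\to B((M^+)^\wedge,\mathrm{com},1)$ out of the Fulton--MacPherson model, where collapsing infinitesimal configurations to the fat diagonal is a strict module map. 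Since your verification step already routes through $\mathcal{F}_{M^+}$, the repair is only to build the comparison map there as well rather than on $E_{M^+}$ directly.
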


\begin{proof}
    Recall from Section \ref{sec:conjecture} that $\partial_*(\Sigma^\infty \mathrm{Map}_*(M^+,-))= B((M^+)^\wedge,\mathrm{com},1)^\vee$ \cite[Example 17.28]{arone_ching_2011}. As a consequence of \cite[Proposition 2.5]{FTW} there is an equivalence of right $B(\mathrm{com})$-comodules 
    \[B(\mathcal{F}_{M^+},(\mathcal{F}_n)_+,1)\xrightarrow{\simeq} B((M^+)^\wedge,\mathrm{com},1)\] derived from the fact\footnote{A similar calculation appears in \cite[Proposition 3.17]{aroneching2014manifolds}.} $\mathcal{F}_{M^+}/\mathrm{decom}(\mathcal{F}_{M^+})\cong (M^+)^\wedge / \Delta^{\mathrm{fat}}$ . Dualizing this equivalence and applying the compactly supported Koszul self duality of $\mathcal{F}_{M}$ yields the result.
\end{proof}

Recall that Boavida de Brito--Weiss and Turchin showed that embedding calculus is computed as derived mapping spaces of right modules over a framed variant of the little disks operad \cite{Turchin_2013,brito_weiss_2013}. Using Koszul self duality, we can replicate the collapse map associated to a codimension $0$ embedding $M \rightarrow N$ for an arbitrary right module map $\Sigma^\infty_+ E_M \rightarrow \Sigma^\infty_+ E_N$.


\begin{thm}[Pontryagin-Thom collapse for stable, framed embedding calculus]\label{thm:pontry}
    For framed $n$-manifolds $M,N$ there is a map
    \[\mathrm{Map}^h_{\Sigma^\infty_+ \mathcal{F}_n}(\Sigma^\infty_+ \mathcal{F}_M, \Sigma^\infty_+ \mathcal{F}_N) \rightarrow \mathrm{Map}^h_{\Sigma^\infty_+ \mathcal{F}_n}(\Sigma^\infty \mathcal{F}_{N^+}, \Sigma^\infty \mathcal{F}_{M^+}).  \]
This map is an equivalence assuming Conjecture \ref{thm:chingKoszul} for $\Sigma^\infty_+ \mathcal{F}_n$.

\end{thm}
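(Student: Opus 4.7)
The plan is to transport a given module map through the compactly supported Koszul self duality of Theorem \ref{thm:selfduality}, apply the contravariant Koszul duality functor $K$, and then compose with the natural biduality unit $\eta: X \to KK(X)$ to land in the target. Starting from $\phi \in \mathrm{Map}^h_{\Sigma^\infty_+ E_n}(\Sigma^\infty_+ E_M, \Sigma^\infty_+ E_N)$, the compatible zigzag equivalences $\Sigma^\infty_+ E_M \simeq s_{(n,n)} K(\Sigma^\infty E_{M^+})$ (and similarly for $N$), lying over the operad equivalence $\Sigma^\infty_+ E_n \simeq s_n K(\Sigma^\infty_+ E_n)$, produce a module map $\bar\phi: K(\Sigma^\infty E_{M^+}) \to K(\Sigma^\infty E_{N^+})$ over $K(\Sigma^\infty_+ E_n)$, after cancelling the invertible operadic and module suspensions $s_n$ and $s_{(n,n)}$.

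Applying the contravariant functor $K$ to $\bar\phi$ yields $K(\bar\phi): KK(\Sigma^\infty E_{N^+}) \to KK(\Sigma^\infty E_{M^+})$ of $KK(\Sigma^\infty_+ E_n)$-modules. Precomposing with the natural biduality unit $\eta_{N^+}: \Sigma^\infty E_{N^+} \to KK(\Sigma^\infty E_{N^+})$, together with the base-change supplied by the operadic biduality unit $\Sigma^\infty_+ E_n \to KK(\Sigma^\infty_+ E_n)$, produces an element of $\mathrm{Map}^h_{\Sigma^\infty_+ E_n}(\Sigma^\infty E_{N^+}, KK(\Sigma^\infty E_{M^+}))$. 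This is the unconditional form of the map of the theorem; Conjecture \ref{thm:chingKoszul} (asserting Koszul biduality for $\Sigma^\infty_+ E_n$) propagates via Theorem \ref{thm:selfduality} to a biduality equivalence $\Sigma^\infty E_{M^+} \simeq KK(\Sigma^\infty E_{M^+})$, identifying this target with $\mathrm{Map}^h_{\Sigma^\infty_+ E_n}(\Sigma^\infty E_{N^+}, \Sigma^\infty E_{M^+})$ and simultaneously promoting the construction to an equivalence on mapping spaces, since under the conjecture the functor $K$ induces equivalences on mapping spectra between biduality-equivalent modules.

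The main obstacle is precisely the dependence on the conjecture: without it, only the weaker ``bidualized'' form of the target is available, and proving the map is an equivalence requires $\eta$ to be an equivalence on both the operad $\Sigma^\infty_+ E_n$ and the module $\Sigma^\infty E_{M^+}$. A secondary task is bookkeeping naturality through the zigzag of Theorem \ref{thm:selfduality}, which involves tracking restriction and derived induction of modules along the operadic zigzag as in Definition \ref{dfn:pullback}. Fortunately this is precisely the Weiss-cosheaf and restriction formalism already developed in Section \ref{sec:factorization}, so the naturality verification should run in parallel to the proof of self duality.
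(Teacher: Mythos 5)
Your proposal is essentially the paper's argument: apply the contravariant Koszul duality functor to mapping spaces to reverse the direction, use the self duality of Theorem \ref{thm:selfduality} to identify $K(\Sigma^\infty E_{M^+})$ and $K(\Sigma^\infty E_{N^+})$ with (suspensions of) $\Sigma^\infty_+ E_M$ and $\Sigma^\infty_+ E_N$, and invoke Conjecture \ref{thm:chingKoszul} for biduality to conclude the map is an equivalence. Your version is if anything slightly more careful than the paper's, since you make explicit that without the conjecture the construction only lands in the bidualized target $KK(\Sigma^\infty E_{M^+})$ via the unit $\eta$.
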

It is not difficult to see that in the case $\mathcal{F}_M \rightarrow \mathcal{F}_N$ is induced by a codimension 0 inclusion $i: M \rightarrow N$, the image under the above map is homotopic to \[\Sigma^\infty i^+: \Sigma^\infty \mathcal{F}_{N^+} \rightarrow \Sigma^\infty \mathcal{F}_{M^+}.\]
Before giving the construction, we recall a conjecture about Koszul duality for arbitrary operads.
\begin{conjecture}[Folklore]\label{thm:chingKoszul}
    For a levelwise $\Sigma$-finite, levelwise cofibrant operad $O$ and levelwise $\Sigma$-finite, levelwise cofibrant right module $R$ in $(\mathrm{Sp},\wedge)$, there is a natural zigzag of equivalences of operads and compatible natural zigzag of equivalences of right modules
    \[O \simeq \dots \simeq K(K(O))\]
    \[R \simeq \dots \simeq K(K(R)).\]
    
\end{conjecture}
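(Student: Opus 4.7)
The plan is to identify the Koszul dual $K(-)$ with a cobar-type functor under finiteness, so that $K \circ K$ becomes the composite of bar and cobar on operads, and then invoke classical bar-cobar duality. Concretely, $K(O) = B(O)^\vee$ is the Spanier-Whitehead dual of the bar-cooperad $B(O)$; under termwise $\Sigma$-finiteness, this should be identified with the cobar construction $\Omega(B(O))$, where $\Omega$ is the left adjoint to $B$ on termwise finite cooperads in spectra. The conjecture then reduces to showing the counit $\Omega B(O) \to O$ of this adjunction is an equivalence, and similarly for modules.

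For the operad case, the first step is to construct a zigzag natural transformation $O \leftarrow \Omega B(O) \to K(K(O))$. The left arrow is the counit of the bar-cobar adjunction, which at the level of trees is defined by collapsing nested trees via the operadic composition of $O$. The right arrow uses the identification $\Omega(C) \simeq B(C^\vee)^\vee$ for termwise $\Sigma$-finite cooperads $C$: both sides are computed by the same trees with $C^\vee$-labeled vertices, with the cobar differential arising on one side from the coproduct of $C$ and on the other through duality. Applied to $C = B(O)$, this yields $\Omega B(O) \simeq B(B(O)^\vee)^\vee = K(K(O))$. The remaining task is to verify the counit is an equivalence; I would filter $\Omega B(O)$ by total number of internal edges, show the filtration is preserved by the counit, and analyze the associated graded as a complex of trees labeled by $O$, using termwise $\Sigma$-finiteness to perform an Euler characteristic style argument showing all higher filtration pieces contract. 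A cleaner alternative would be to adapt Proposition \ref{prp:koszulverdieroperad} by iterating Verdier duality: the doubled Koszul dualizing fibration $\xi_{K(O)}$ would be shown to be Verdier-dual to $\xi_O$ in the parametrized category, reducing the problem to involutivity of Verdier duality on termwise finite parametrized spectra.

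For the module case, form the analogous zigzag $R \leftarrow \Omega B(R) \to K(K(R))$ over the operadic zigzag. The duality identification yields the right-hand equivalence, and the module bar-cobar counit is defined by tree collapse exactly as in the operad case. The filtration argument restricts to modules without essential modification, and compatibility with the operad zigzag is automatic from the compatible tree models. Naturality with respect to operad and module maps follows from the evident functoriality of $B$, $\Omega$, and Spanier-Whitehead duality, together with the compatibility of the counit collapse maps with partial composites.

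The hard part will be the point-set realization of the counit $\Omega B(O) \to O$ as an equivalence of operads in spectra. In the dg setting this uses an explicit contracting homotopy coming from a diagonal on trees, but lifting this to a homotopy-coherent equivalence of spectral operads is precisely the kind of rigidification difficulty that made the Ching-Salvatore proof of the self duality of $E_n$ so intricate \cite{ching_salvatore}. A further subtlety is that the naturality in the conjecture is with respect to zigzags of weak equivalences, requiring the entire construction to be functorial up to homotopy, which demands careful handling of bar and cobar at the point-set level in parallel with the framework of Definition \ref{dfn:pullback}. This is presumably the reason the conjecture remains open.
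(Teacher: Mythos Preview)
The statement you are addressing is labeled \emph{Conjecture} in the paper, and the paper provides no proof of it. It is stated as an open problem, and the paper's Theorem \ref{thm:pontry} is explicitly conditional on it. The paper does remark on partial progress: the operad-level equivalence $O \simeq K(K(O))$ is attributed to Ching \cite{ching_2012}, and the same reference establishes $R \simeq K(K(R))$ at the level of symmetric sequences. What remains open is upgrading the module equivalence to one that is compatible with the operad zigzag, and this is the content you would actually need to supply.

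Your outline rediscovers the known strategy for the operad case but does not isolate the genuine obstruction in the module case. The zigzag $O \leftarrow \Omega B(O) \rightarrow K(K(O))$ you describe is essentially Ching's approach in \cite{ching_2012}, and indeed the counit $\Omega B(O) \rightarrow O$ is already known to be an equivalence there. The identification $\Omega(C) \simeq B(C^\vee)^\vee$ you invoke, however, is not a triviality: it requires comparing Ching's point-set cobar construction with the Spanier--Whitehead dual of the bar construction, and this comparison as \emph{operads} (not merely symmetric sequences) is exactly where the difficulty lives. Your filtration and Euler-characteristic sketch would at best recover the symmetric-sequence-level statement that is already in \cite{ching_2012}; it does not address the structured compatibility. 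Your alternative suggestion of iterating Verdier duality via $\xi_{K(O)}$ is not well-posed in the paper's framework, since the Koszul dualizing fibration $\xi_O$ is defined only for operads $O$ in $(\mathrm{Top},\times)$, not for operads in spectra such as $K(O)$.

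You correctly diagnose in your final paragraph that the hard part is the point-set rigidification, and that this is presumably why the conjecture is open. That diagnosis is accurate, but it means your proposal is an outline of the known difficulties rather than a proof.
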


 A version of this result was proven for $\Sigma^\infty_+ E_n$ in \cite[Proposition 2.7]{aroneching2014manifolds}, though the Koszul duality functors there are of a subtantially different form from the Koszul duality functors of Ching's thesis. A consequence of this conjecture would be that (when derived) $K(-)$ is an equivalence on the subcategory of levelwise finite right modules, and thus:
\[\operatorname{Map}^h_{O}(R,R') \simeq \operatorname{Map}^h_{K(O)}(K(R'),K(R)).\]

The stated zigzag of operad equivalence is one of the main results of \cite{ching_2012} where it was also proven $R \simeq K(K(R))$ on the level of symmetric sequences.

\begin{proof}[Proof of Theorem \ref{thm:pontry}]
    Koszul duality supplies us with a map
    \[\mathrm{Map}^h_{\Sigma^\infty_+ \mathcal{F}_n}(\Sigma^\infty_+ \mathcal{F}_M,\Sigma^\infty_+ \mathcal{F}_N) \rightarrow \mathrm{Map}^h_{K(\Sigma^\infty_+ \mathcal{F}_n)}( K(\Sigma^\infty_+ \mathcal{F}_N),K(\Sigma^\infty_+ \mathcal{F}_M))\]
     If Conjecture \ref{thm:chingKoszul} is true, this map is an equivalence. The compactly supported Koszul self duality of $\mathcal{F}_M$ implies that, up to zigzags of equivalence compatible with the self duality of $\mathcal{F}_n$, $s_{(n,n)}K(\Sigma^\infty_+ \mathcal{F}_M)\simeq \Sigma^\infty \mathcal{F}_{M^+}$ which allows us to replace the second mapping space with $\mathrm{Map}^h_{\Sigma^\infty_+ \mathcal{F}_n}(\Sigma^\infty \mathcal{F}_{N^+},\Sigma^\infty \mathcal{F}_{M^+})$ since $s_{(n,n)}$ is invertible up to homotopy. 
\end{proof}

The theory of Koszul duality developed by Ching has an extension the category of left modules (without a $0$ term). We refer to \cite{ching_2005} for a detailed account.\footnote{Be warned, left modules are not defined via partial composites.} The Koszul duality of operads, right modules, and left modules has an interesting interaction with operadic bar constructions. For any level-cofibrant operad $O$, level-cofibrant right module $R$, and level-cofibrant left module $L$ in $(\mathrm{Sp},\wedge)$ there is an equivalence \cite[Proposition 6.1]{ching_2012}:
\[B(R,O,L) \xrightarrow{\simeq} \Omega(B(R),B(O),B(L))\]
Using the suggestive notation of \cite{amabel_2022} this can be written as
\[\int_{R} L \xrightarrow{\simeq} \int^{B(R)}B(L)\]
in analogy with the Poincaré-Koszul duality arrow of \cite{PKayala_francis_2019}. For a general operad $O$, one cannot further this analogy, however, in the case of $\mathcal{F}_M$, Koszul self duality of $\mathcal{F}_n$ and $\mathcal{F}_M$ allows us to interpret the righthand calculation as taking place over a suspension of the right comodule $(\Sigma^\infty \mathcal{F}_{M^+})^\vee$ over a suspension of the cooperad $ (\Sigma^\infty_+ \mathcal{F}_n)^\vee$.\footnote{In general, the dual of an operad is not naturally a cooperad since taking Spanier--Whitehead duals only distributes over smash products up to weak equivalence rather than on the nose. Here $O^\vee$ should be interpreted as $B(K(O))$ \cite[Remark 4.6.]{ching_2012}.} This parallels the fact that Poincaré-Koszul duality \cite{PKayala_francis_2019} relates $E_n$-algebra computations over $M$ to $E_n$-coalgebra computations over $M^+$. Implicitly restricting $B(L)$ along the zigzag equivalence $B(\Sigma^\infty_+ \mathcal{F}_n) \simeq s_n (\Sigma^\infty_+ \mathcal{F}_n) ^\vee$, we have:

\begin{thm}[Poincaré-Koszul duality for left $\Sigma^\infty_+ \mathcal{F}_n$-modules]\label{thm:Poincarékoszul}
For a framed $n$-manifold $M$ and a left $\Sigma^\infty_+ \mathcal{F}_n$-module $L$ there is an equivalence
\[\int_{\Sigma^\infty_+ \mathcal{F}_M} L \xrightarrow{\simeq} \int^{s_{(n,n)}\Sigma^\infty \mathcal{F}_{M^+}^\vee} B(L).\]

\end{thm}

 \bibliographystyle{plain}
 \bibliography{main.bib}

 \end{document}